\def\@cite#1#2{{\m@th\upshape\bfseries%
[{#1\if@tempswa{\m@th\upshape\mdseries, #2}\fi}]}}
\theoremstyle{plain}
\newtheorem{thm}{Theorem}[section]
\newtheorem{cor}[thm]{Corollary}
\newtheorem{prop}[thm]{Proposition}
\newtheorem{lem}[thm]{Lemma}
\theoremstyle{definition}
\newtheorem{prob}[thm]{Problem}
\theoremstyle{remark}
\newtheorem{rem}[thm]{Remark}
\numberwithin{equation}{subsection}
\renewcommand{\bold}[1]{\medskip \noindent {\bf #1 }\nopagebreak}
\newcommand{\nc}{\newcommand}
\newcommand{\rnc}{\renewcommand}
\newcommand{\e}{\varepsilon}
\newcommand{\fp}[1]{\langle\langle #1 \rangle\rangle}
\nc\bA{\mathbb{A}}
\nc\bB{\mathbb{B}}
\nc\bC{\mathbb{C}}
\nc\bD{\mathbb{D}}
\nc\bE{\mathbb{E}}
\nc\bF{\mathbb{F}}
\nc\bG{\mathbb{G}}
\nc\bH{\mathbb{H}}
\nc\bI{\mathbb{I}}
\nc{\bJ}{\mathbb{J}} 
\nc\bK{\mathbb{K}}
\nc\bL{\mathbb{L}}
\nc\bM{\mathbb{M}}
\nc\bN{\mathbb{N}}
\nc\bO{\mathbb{O}}
\nc\bP{\mathbb{P}}
\nc\bQ{\mathbb{Q}}
\nc\bR{\mathbb{R}}
\nc\bS{\mathbb{S}}
\nc\bT{\mathbb{T}}
\nc\bU{\mathbb{U}}
\nc\bV{\mathbb{V}}
\nc\bW{\mathbb{W}}
\nc\bY{\mathbb{Y}}
\nc\bX{\mathbb{X}}
\nc\bZ{\mathbb{Z}}
\nc\cA{\mathcal{A}}
\nc\cB{\mathcal{B}}
\nc\cC{\mathcal{C}}
\rnc\cD{\mathcal{D}}
\nc\cE{\mathcal{E}}
\nc\cF{\mathcal{F}}
\nc\cG{\mathcal{G}}
\rnc\cH{\mathcal{H}}
\nc\cI{\mathcal{I}}
\nc{\cJ}{\mathcal{J}} 
\nc\cK{\mathcal{K}}
\rnc\cL{\mathcal{L}}
\nc\cM{\mathcal{M}}
\nc\cN{\mathcal{N}}
\nc\cO{\mathcal{O}}
\nc\cP{\mathcal{P}}
\nc\cQ{\mathcal{Q}}
\rnc\cR{\mathcal{R}}
\nc\cS{\mathcal{S}}
\nc\cT{\mathcal{T}}
\nc\cU{\mathcal{U}}
\nc\cV{\mathcal{V}}
\nc\cW{\mathcal{W}}
\nc\cY{\mathcal{Y}}
\nc\cX{\mathcal{X}}
\nc\cZ{\mathcal{Z}}
\nc{\LS}{\ensuremath{\underset{n=1}{\overset{\infty}{\cap}} \, {\underset{i=n}{\overset{\infty}{\cup}}}\,}}
\nc{\dmo}{\DeclareMathOperator}
\dmo{\Area}{Area}
\dmo{\Var}{Var}
\begin{document}

\title[A mixing flow on a surface]{A mixing flow on a surface with non-degenerate fixed points}
%

\author[Chaika]{Jon~Chaika}
\address{\hspace{-0.5cm}Department of Mathematics\newline
University of Utah\newline
155 S 1400 E, Room 233\newline
Salt Lake City, UT 84112}
\email{chaika@math.utah.edu}
\author[Wright]{Alex~Wright}
\address{\hspace{-0.5cm}Department of Mathematics\newline
Stanford University\newline
Palo Alto, CA 94305}
\email{amwright@stanford.edu}
%

\begin{abstract}
We construct a smooth, area preserving, mixing flow with finitely many non-degenerate fixed points and no saddle connections on a closed surface of genus 5. This resolves a problem that has been open for four decades.  
\end{abstract}

\maketitle
\thispagestyle{empty}


\newenvironment{xquote}
  {\list{}{\leftmargin=0pt\rightmargin\leftmargin}\item\relax}
  {\endlist}

\section{Introduction}\label{S:intro}

\bold{Motivation and main result.} Flows on surfaces are a basic example in smooth dynamics, being in a sense the smallest smooth dynamical systems after circle diffeomorphisms, and have been the topic of a vast body of research. 

In 1972, the existence of smooth ergodic flows on all closed surfaces except the sphere, projective plane, and Klein bottle was established \cite{Blo}. Only a few years later Katok, Sinai, and Stepin indicated the following as an open problem in their 1975 survey paper \cite[4.4.1]{KSS}.

 ``Let $T_t$ be a smooth flow on a surface of genus $p\geq2$ with smooth positive invariant measure, all of whose fixed points are non-degenerate saddles. Can $T_t$ be mixing? The distinguished results of A. V. Kochergin and A. B. Katok give a negative answer to this question in all probability."

The same question was listed by Katok and Thouvenot  in the Handbook of Dynamical Systems \cite[Problem 6.10]{kat_thouv} and was mentioned by Forni in \cite[Page 4]{forni_dev}. The purpose of this paper is to provide a positive answer. 

\begin{thm}\label{T:main}
There is a mixing, smooth, area preserving flow on a surface of genus 5 with finitely many fixed points, all non-degenerate, and no saddle connections. 
\end{thm}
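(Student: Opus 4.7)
The plan is to realize the required flow as a special flow over an interval exchange transformation (IET) $T$ on an interval $I$, with a positive roof function $r\colon I \to \bR$. Under the standard dictionary, smooth area preserving flows with non-degenerate saddles and no saddle connections correspond to special flows whose roof functions are smooth away from finitely many points and have \emph{symmetric logarithmic singularities} at pre-images of the separatrices of the saddles. Thus Theorem~\ref{T:main} is reduced to exhibiting a single IET of the right combinatorial type, together with such a roof function, whose special flow is mixing.

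The heart of the argument is a precise control of the Birkhoff sums $S_n r$. The scheme I would follow is:
\begin{enumerate}
\item[(i)] Fix a translation surface $(X,\omega)$ in a stratum of genus $5$ whose horizontal IET admits a renormalization with a favourable hyperbolic matrix, producing very long and nearly balanced Rokhlin towers at infinitely many scales.
\item[(ii)] Start from the constant roof $r\equiv 1$ for the translation flow and add logarithmic potentials supported near pre-images of the chosen fixed points, tuning weights and locations so that, at every scale produced by renormalization, the logarithmic contributions of the various singularities to $S_n r$ do \emph{not} cancel.
\item[(iii)] Prove a Denjoy--Koksma-type estimate: on a definite proportion of each renormalized tower, $S_n r$ is close to an affine function with slope tending to $+\infty$.
\item[(iv)] Deduce mixing from the classical shearing criterion: the image of a short horizontal arc under the flow at time $t$ becomes long and nearly vertical and equidistributes with respect to area, forcing correlations of continuous observables to decay.
\end{enumerate}

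The main obstacle is step (ii). It has long been expected, on the basis of results of Kochergin, Katok, and more recently of Ulcigrai and others, that the symmetric logarithmic singularities arising from non-degenerate saddles produce essentially cancelling contributions to the Birkhoff sums of $r$, ruling out mixing in all but pathologically special situations; this heuristic is the source of the forty-year-old pessimistic prediction quoted in the Introduction. Breaking the cancellation seems to require two ingredients at once: a sufficiently high genus, providing enough saddles and separatrices to absorb and redistribute the asymmetry, and a carefully chosen, non-generic IET in which the renormalization orbits of the singular points align to reinforce rather than cancel. Once such an example is found, the further requirement of no saddle connections can be arranged by a final small perturbation, using that the set of IET data free of saddle connections is a dense $G_\delta$ and that the quantitative mixing criterion produced in step (iv) is stable under sufficiently small smooth perturbations.
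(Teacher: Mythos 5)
Your outline correctly identifies the general framework (special flows over IETs, Birkhoff sums of the derivative of the roof, shearing criterion) and correctly locates the main obstacle in step (ii): the cancellation coming from symmetric logarithmic singularities. But step (ii) as written is a restatement of the goal rather than a mechanism, and this is precisely where the paper's actual content lives. The paper does not start from a translation surface with a favourable pseudo-Anosov renormalization as in your step (i); it builds a $\mathbb{Z}_2 \times \mathbb{Z}_2$ skew product $\hat{T}$ over a single circle rotation, following the Katok--Sataev--Veech construction. The key structural idea is to approximate the (uniquely ergodic) $\hat{T}$ by non-minimal skew products $\hat{T}_\ell$ whose minimal components each contain an interval on only \emph{one} side of the singular point; when an orbit segment stays in one such component, it samples only one side of the singularity and the two logarithmic tails do not cancel, producing Birkhoff sums of $f'$ of size $\pm N\log N$. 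Achieving this requires a highly non-generic continued fraction expansion of the rotation number (Assumptions (1)--(9) in Section~3), quantitative control via Denjoy--Koksma and Ostrowski expansions, and a separate, nontrivial unique ergodicity argument (since the approximates are non-minimal by design). None of this is captured by ``tuning weights and locations so that the contributions do not cancel'': for non-degenerate saddles the weights are forced to be symmetric and equal, and the locations are pinned to the discontinuities of the IET, so there is essentially nothing left to tune except the IET itself.

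A second, more concrete error is the final ``small perturbation'' step. You propose to arrange no saddle connections after the fact, citing density of saddle-connection-free data and stability of the mixing criterion under small smooth perturbations. This fails on both counts. First, the construction is not stable under perturbation: mixing here is a measure-zero phenomenon (Ulcigrai, Scheglov), and the entire argument rests on a sharply tuned continued fraction expansion and on the equality between $\hat{T}$ and its non-minimal approximates on sets of controlled measure; a generic perturbation of the IET destroys the skew-product structure and the Birkhoff-sum estimates with it. Second, there is no perturbation step in the paper: the absence of saddle connections is checked directly from the explicit construction (the forward orbits of the finitely many points that hit discontinuities are shown to be infinite and distinct). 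Finally, the genus $5$ is not selected for abundance of separatrices as your step (ii) suggests; it is forced by the construction (a surface built from four sheared unit squares with $8$ cone points of angle $4\pi$), and the paper explicitly leaves genus $2$ open precisely because the corresponding non-minimal flows with one-sided minimal components do not exist there.
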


A saddle connection is a flow trajectory joining two fixed points for the flow. The derivative of a smooth flow is a vector field, which can be written locally on a surface as $A(x,y) \partial_x + B(x,y) \partial_y$. A fixed point is called non-degenerate if at that point the function $(A,B)$ has non-zero Jacobian, i.e. if $A_x B_y-A_y B_x\neq 0$. 

Already at the time of Katok, Sinai, and Stepin's question, Kochergin had shown that mixing flows do exist on surfaces if degenerate saddles are allowed \cite{koc_mix}.  However, the presence of degenerate saddles has such a drastic effect that it is reasonable to believe that the natural class of dynamical systems that should be grouped together is not all smooth area preserving flows on surfaces, but rather those with only finitely many saddles, all of which are non-degenerate. 

Kochergin's result that mixing flows exist in fact supports making a large distinction between degenerate and non-degenerate saddles, since he produced mixing flows with degenerate fixed points even on tori, but later went on to show that flows on tori with only non-degenerate fixed points are never mixing \cite{koc2007}. 

The intuition that the types of flows considered in our main theorem are very unlikely to be mixing has proven correct, as Ulcigrai has recently established that such flows are generically not mixing \cite{ulc_sym}, following work of Scheglov showing this in genus 2 \cite{scheg}. It is however known that these flows are generically uniquely ergodic \cite{Merg, Verg} and weak mixing \cite{ulc_wmix}. (The notion of generic here is measure theoretic.) Many examples are mild mixing \cite{KKP}. 

\bold{Kochergin's mechanism for mixing.} Consider a small horizontal line segment in $\bR^2$. Under the action of $\left(\begin{array}{cc} 1&0\\t&1 \end{array}\right)$ this small horizontal line segment will be sheared until eventually it is close to a long vertical line segment. Similarly, an interval transverse to a flow may eventually get sheared so much that it becomes close to an orbit of the flow. Kochergin's seminal observation is that in this case, if the flow is ergodic, one may expect equidistribution of such flowed transverse intervals, and subsequently hope to conclude that the flow is mixing.

This idea has been used in many subsequent works, and has also been applied to flows on higher dimensional manifolds, such as Fayad's example of a reparameterization of a linear flow on $\bT^3$ that is mixing and has singular spectrum \cite{fayad}. (We mention in passing that mixing is easier to obtain in dimension greater  two, and indeed Fayad obtains mixing for a flow without fixed points, because there are two dimensions transverse to the flow which may alternately be sheared.) 

Kochergin's technique will be the engine of mixing in Theorem \ref{T:main}. The shearing effects are most significant near the fixed points of the flow, since trajectories that pass closer to a fixed point will get slowed down more than trajectories that pass farther away. This is why degenerate fixed points can help in establishing mixing; they establish an extreme shearing effect. For non-degenerate fixed points, the shearing effect is less extreme, and typically  many passes near a fixed point are required to accumulate an appreciable amount of shearing. However, 
passing on different sides of a fixed point produces opposite effects, which are expected to cancel out. This leads to the intuition, which has been made rigorous with the work of Scheglov, Ulcigrai, and others, that area preserving flows should not be mixing. 

\bold{Suspension flows.} 
Given a flow on a surface with finitely many fixed points and no saddle connections, pick a disjoint union $I$ of intervals transverse to the flow direction. (Typically one picks only one interval, but it will be convenient for us to use four.) Let $T:I\to I$ be the first return map, and let $f:I\to (0,\infty]$ be the first return time function, so the flow is isomorphic to the vertical flow on the space 
$$\{(x,s):x\in I, 0\leq s\leq f(x)\}/((x,f(x)) \sim  (T(x), 0)).$$

If the flow is measure preserving, $I$ can be parameterized so that $T$ is a \emph{multi-interval exchange transformation}, i.e., a permutation of a finite number of subintervals that partition $I$. (If there is only one interval, this is called an \emph{interval exchange transformation}.)
Since the flow is smooth, $f$ is smooth away from the discontinuities of $T$. 

The standard model for a non-degenerate fixed point is given by the vector field $x \partial_y+y\partial_x$. This has two incoming trajectories, and two outgoing trajectories. 
For a flow with only finitely many non-degenerate fixed points, the first return map is, up to a bounded function with bounded derivative, equal to a function of the form $f=1-\sum c_i \log |x-x_i|$. Since the roof function is infinite at the $x_i$, these are the points that orbit into a fixed point. If $x_i$ and $x_j$ orbit into the  same discontinuity, then $c_i=c_j$. More precise statements can be found in \cite{kat_skew} and \cite[Section 3]{koc_nomix}. 

Moreover, standard arguments show that all   $T$ and $f$ satisfying certain technical conditions arise from smooth flows on surfaces with only finitely many non-degenerate fixed points \cite[Section 7]{con_fra}. Thus, to prove our main theorem, we will find  an appropriate $T$ and $f$ for which the suspension flow is mixing, and in the last section of this paper will will explain how Theorem \ref{T:main} follows. 

\bold{Birkhoff sums of non-integrable functions.} 
We will see that the net shearing of an interval transverse to the flow is controlled by Birkhoff sums of $f'$, that is, by sums of the form 
$$\sum_{i=0}^{N-1} f'(T^ix).$$
Our roof function $f$ will have $f'$ non-integrable, so the Birkhoff Ergodic Theorem may not be used to understand these sums. Note also that $f$ will not be of bounded variation. Katok has shown that suspension flows over interval exchange transformations with roof functions of bounded variation are never mixing \cite{kat_nomix}.

To get enough shearing for mixing, we will require the above  Birkhoff sums to grow faster than linearly in $N$, and we will need fairly precise control. 

The problem is that  we expect a large amount of cancellation to occur between positive and negative terms in this Birkhoff sum. When $T^i(x)$ is close to and on the right side of a singularity, $f'(T^ix)$ will be very negative. When $T^i(x)$ is close to and on the left side of a singularity,  $f'(T^ix)$ will be very positive.  

\bold{The Katok-Sataev-Veech construction.} In turns out that the following result is technically easier to prove than Theorem \ref{T:main}. 

\begin{thm}\label{T:genus2}
There is a $\mathbb{Z}_2$ skew product $T$ over a rotation with the following properties:  
 $T$ has four discontinuities, $x_1, x_2, x_3, x_4$. There is a fifth point $x_0$ that is not a discontinuity, such that the suspension flow over $T$  with roof function
$$f(x)=1-\sum_{i=0}^4 \log |x-x_i|$$
is mixing. 
\end{thm}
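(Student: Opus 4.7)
The plan is to realize $T$ as a $\mathbb{Z}_2$-skew product $T(x,\epsilon)=(R_\alpha x, \epsilon \cdot \phi(x))$ where $R_\alpha$ is an irrational rotation on $\mathbb{T}$ with carefully chosen Diophantine type and $\phi:\mathbb{T}\to\mathbb{Z}_2$ is a cocycle with two discontinuities. Viewed as an interval exchange on the cover $\mathbb{T}\times\mathbb{Z}_2$, this produces exactly the four discontinuities $x_1,x_2,x_3,x_4$ required. The fifth point $x_0$ is placed in the interior of a continuity interval of $T$, so that $f$ has a logarithmic spike at $x_0$ without $T$ having a discontinuity there. The resulting $T$ should be uniquely ergodic by the Katok--Sataev--Veech machinery for such skew products (this is where the name comes from, and is essentially why the skew-product setting is easier than a general IET: unique ergodicity is built in).

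\textbf{Shearing via Birkhoff sums of $f'$.} Following the heuristic recalled in the excerpt, mixing reduces to establishing super-linear growth of the Birkhoff sums $S_N f'(x)=\sum_{i=0}^{N-1} f'(T^i x)$ on a sufficiently rich set. Writing $f'(x)=-\sum_{k=0}^{4}(x-x_k)^{-1}$, the sum splits into five pieces, one for each singularity. Under $T$, orbits of points in $\mathbb{T}\times\mathbb{Z}_2$ project to orbits of $R_\alpha$, so one can apply Denjoy--Koksma type estimates and the three-distance theorem to analyze how closely orbits approach each $x_k$. The delicate point is cancellation: if $T^i x$ passes to the left of $x_k$ roughly as often as to the right, contributions largely cancel. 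The role of the skew product is that the two sheets of the $\mathbb{Z}_2$ cover force the contributions from the paired discontinuities $\{x_1,x_2\}$ and $\{x_3,x_4\}$ to combine with a definite sign pattern; the role of the asymmetric fifth singularity $x_0$ is to destroy the residual cancellation and leave a net contribution of order $N\log N$ (or at least markedly super-linear) along a sequence of good times $N_j\to\infty$.

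\textbf{From shearing to mixing.} With super-linear Birkhoff sums in hand, the argument concludes along standard Kochergin lines. Given two small rectangles $B,B'$ in the suspension space, after flowing $B$ for time $t$ its image is approximately a graph $\{(y, s+t-S_{N(y)}f(y))\}$ over a small interval $J'\subset I$; the super-linear growth of $S_N f'$ forces this graph to have very steep slope, so its fibers wrap many times around the roof. Unique ergodicity of $T$ then implies that the $y$-projection of this sheared image equidistributes, and hence $\mu(\phi_t B\cap B')\to \mu(B)\mu(B')$. Approximation yields mixing for all measurable sets.

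\textbf{Main obstacle.} The most delicate step is the second one: producing a quantitative, pointwise-on-a-set-of-positive-measure lower bound on $|S_N f'|$. The logarithmic singularities mean $f'$ is not integrable and the contributions from near-misses of the orbit to the singularities are enormous; one must show that the $\mathbb{Z}_2$ structure and the placement of $x_0$ break symmetry enough that these enormous contributions do not average out. Choosing $\alpha$ (and the locations of the $x_k$) so that the orbit's returns near each singularity can be tracked with sufficient precision, while simultaneously forbidding the resonances that would produce cancellation, is the heart of the argument.
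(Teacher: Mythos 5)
Your overall framework (skew product over a rotation, Kochergin-style shearing, passage from super-linear Birkhoff sums of $f'$ to mixing) matches the paper's, and you correctly flag the main obstacle as producing the Birkhoff sum lower bound. But two points need correction, and the second is the heart of the matter.

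First, you have the role of Katok--Sataev--Veech backwards. That construction produces \emph{minimal but non-uniquely ergodic} interval exchange transformations; unique ergodicity is emphatically not ``built in'' to the skew-product setting. The paper deliberately pushes the construction to the boundary between unique ergodicity and non-unique ergodicity: $T$ is uniquely ergodic (this requires a dedicated argument, Proposition \ref{prop:suff holds}, hinging on Assumption \ref{A:infsum} that $\sum a_{n_k+1}^{-1}=\infty$), but it is very well approximated by non-minimal skew products $T_\ell$ and its orbits equidistribute extremely slowly. The slowness is essential, not incidental.

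Second, and more importantly, you never identify \emph{how} the fifth singularity breaks the cancellation; saying the $\mathbb{Z}_2$ structure ``forces a definite sign pattern'' is too vague to be a plan. The paper's mechanism is the following. The skewing interval $J$ is built from a very specific nested family of intervals $J_\ell$ keyed to the closest return times $q_{n_k}$ of a non-generic $\alpha$. This yields non-minimal approximates $T_\ell$ (Lemma \ref{L:UV}, Lemma \ref{L:invar}) with two invariant sets $U_\ell,V_\ell$, and Theorem \ref{T:skew} shows $U_\ell$ contains $[0,\fp{q_{n_\ell}\alpha})\times\{0\}$ and $[1-\fp{q_{n_\ell}\alpha},1)\times\{1\}$. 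Since the extra logarithmic singularity at $x_0=0$ lives only on the sheet $j=1$, an orbit that stays in $V_\ell$ for a long time sees that singularity from the right only (and one in $U_\ell$ sees it from the left only). Combined with the Denjoy--Koksma estimates of Section \ref{S:Warmup} and the continued fraction Assumptions \ref{A:even}--\ref{A:2}, this gives the $\pm N\log N$ behavior in Theorem \ref{thm:output jon}(3). Without this approximate-invariant-set structure the positive and negative contributions near $x_0$ would cancel, exactly as you worry in your ``Main obstacle'' paragraph. So the gap in your proposal is not an inessential detail: the non-minimal approximation scheme and the placement of $J$ relative to the return times is the key idea, and it also drives the unique ergodicity proof, which you had attributed to generic machinery.
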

This choice of  $T$ and $f$ do not satisfy the technical conditions to correspond to a smooth flow, because the roof function has one extra  singularity that is not at a discontinuity of $T$. In the final section we choose a closely related but more complicated $\hat{T}$ and $\hat{f}$ to prove  Theorem \ref{T:main}.

Most of this paper is occupied with the proof of Theorem \ref{T:genus2}. 
To build $T$, we modify the Katok-Sataev-Veech construction for producing examples of minimal but non-uniquely ergodic interval exchange transformations \cite{v_skew, kat_skew, sat_skew}. Our $T$ will in fact be uniquely ergodic, but orbits equidistribute very slowly and in a controlled manner.

To obtain mixing, we construct $T$ to be very well approximated by  non-minimal $\mathbb{Z}_2$ skew products of rotations, $T_k$, such that $T_k$ has two minimal components, one of which contains an interval to the left of $x_0$, and one of which contains an interval to the right of $x_0$. Quantitative estimates, and highly non-generic choices of parameters such as the continued fraction expansion  of the base rotation, allow us to show that this asymmetry in the minimal approximates yields appropriate growth in the Birkhoff sums of $f'$ and hence obtains sufficient shearing for mixing. To get this growth, we must prevent the terms in these sums where $T^i(x)$ is to the right of $x_0$ from canceling with the terms where $T^i(x)$ is to the left of $x_0$. This is difficult because $T$ is uniquely ergodic, so all orbits equidistribute. We show that the terms where $T^i(x)$ is in certain decreasing neighborhoods of $x_0$ dominate these sums, and within these smaller and smaller neighborhoods  orbits of certain lengths are not at all equidistributed. 

Technically speaking, our analysis makes heavy use of continued fractions and the Denjoy-Koksma inequality. We chose parameters so that the base rotation has orbits which equidistribute very quickly, but the two minimal components in the skew product equidistribute slowly.

\bold{Open problems.} The proof of Theorem \ref{T:main} is built on the fact that there are non-minimal smooth flows with finitely many non-degenerate fixed points on surfaces of genus 5 such that one minimal component sees only one side of a fixed point. Such flows also exist on surfaces of genus 3 and 4. However, there are no such flows on surfaces of genus 2. Thus, the following seems especially interesting.


\begin{prob}
Is there is a mixing, smooth, area preserving flow on a surface of genus 2 with finitely many fixed points, all non-degenerate, and no saddle connections?
\end{prob}
Another natural question is the size of the exceptional set we are considering.
\begin{prob} For fixed genus $g$, what is the Hausdorff dimension of the set of interval exchange transformations that are the first return map of a smooth area preserving flow on a surface of genus $g$ that is mixing, has no saddle connections, and has only finitely many fixed points, all of which are non-degenerate?
\end{prob}

\bold{Some pointers to the literature.} We have tabulated over 40 papers that should be mentioned in any complete history of area preserving flows on surfaces. We will omit many of them here. See \cite{koc_survey} for a survey. 

Some early examples of flows on surfaces that are not mixing include \cite{kol, kat67, Sthesis, koc_nomix, koc76}. The first result on weak mixing of flows may perhaps be due to von Neuman \cite{vN, FL09}. 

Novikov has suggested a link between area preserving flows on surfaces, and solid state physics \cite{Nov}. Because of this connection to physics, area preserving flows on surfaces are often called multi-valued Hamiltonian flows.

Arnold has pointed out that flows over interval exchange transformations with asymmetric logarithmic singularities arise from non-minimal flows on surfaces, and as a result these flows are now well studied and it is known that mixing in this context is generic \cite{ulc_asym}, see also \cite{Arn, koc_1, koc_2, koc_well, koc_some, SK}.  

Fr{{a}}czek-Lema{n}czyk have provided many examples of area preserving flows on surfaces that are disjoint from all mixing flows, and have proven weak mixing for many suspension flows over rotations \cite{FrLem, Fr2003}. 

Ko{{c}}hergin has given examples of  flows over rotations that mix at polynomial speed on rectangles \cite{koc_polymix}. Fayad-Kanigowski have shown multiple mixing of many suspension flows over rotations with asymmetric logarithmic singularities or degenerate fixed points \cite{FK}.
 
 Suspension flows over interval exchange transformations with different roof functions are also frequently studied, see for example \cite{LemNoMix, koclip, FL, FL06}.

\bold{The organization of this paper.} Most of this paper is occupied with the proof of Theorem \ref{T:genus2}. In the final Section \ref{S:surface}  we  explain how to modify our construction to yield Theorem \ref{T:main}. 

Section \ref{S:background} collects standard results on continued fractions and rotations whose use will be ubiquitous in our analysis. In Section \ref{S:alpha} we list the assumptions we must place on the rotation number of the base rotation, and we give an explicit example of a continued fraction expansion satisfying these assumptions. In Section \ref{S:skew} we define the skew product $T$ used to prove  Theorem \ref{T:genus2}, and discuss its non-minimal approximates $T_k$. In Sections \ref{S:Warmup} and \ref{sec:work} we prove estimates on Birkhoff sums. In Section \ref{sec:ue} we prove unique ergodicity of the $T$ used in Theorem \ref{T:genus2}, and in Section \ref{S:mix} we prove Theorem \ref{T:genus2}.

\bold{Big $O$ and little $o$ notation.} Given two sequences of numbers, $\{c_n\}$ and $\{d_n\}$, we write $c_n=O(d_n)$ if there exists $M\in \mathbb{R}$ so that $-M|d_n|<c_n<M|d_n|$ for all $n$. We write $c_n=o(d_n)$ if $\underset{n \to \infty}{\lim}\, \frac{c_n}{d_n}=0$.

\bold{Warning.} Readers should pay careful attention to the typesetting in subscripts, for example to distinguish $q_{n_{k+1}}$ and $q_{n_{k}+1}$.

\bold{Acknowledgements.} Research of JC partially supported by the NSF grants DMS 1004372, 1300550. Research of AW partially supported by a Clay Research Fellowship. The authors are very grateful to Anatole Katok, Corinna Ulcigrai, and Amie Wilkinson for useful  conversations.

\section{Continued fractions and rotations}\label{S:background}

\bold{Continued fractions.}
Fix a positive irrational real number $\alpha\in \bR$. Let $a_n$ denote the $n$-th term in the continued fraction expansion of $\alpha$, and let $p_n/q_n$ denote the $n$-th best approximate of $\alpha$. 
\begin{equation*}
\alpha=a_0+\cfrac{1}{a_1+\cfrac{1}{a_2+\cfrac{1}{a_3+\cdots}}}
\quad\quad \quad\quad
\frac{p_n}{q_n}=a_0+\cfrac{1}{a_1+\cfrac{1}{a_2+\cfrac{1}{\cdots+\cfrac{1}{a_n}}}}
\end{equation*}

A best approximate of  $\alpha$ is defined to be any rational number $p/q$ such that if $p',q'$ are integers with $0<q'\leq q$, then 
\begin{equation*} 
\left|\alpha-\frac{p}{q}\right|\leq \left|\alpha-\frac{p'}{q'}\right|.
\end{equation*}

\begin{thm}\label{T:CF}
The following hold. 
\begin{enumerate}
\item Recursive formulas: 
\begin{eqnarray*}
 p_{n+1}&=&a_{n+1}p_n+p_{n-1},\\
 q_{n+1}&=&a_{n+1}q_n+q_{n-1},\\
 \frac{p_n}{q_n}&=&\frac{p_{n-1}}{q_{n-1}}+\frac{(-1)^{n+1}}{q_{n-1}q_n}.
 \end{eqnarray*} 
\item Alternating property: 
\begin{equation*}
\frac{p_0}{q_0}<\frac{p_2}{q_2}< \cdots<\alpha< \cdots< \frac{p_3}{q_3}<\frac{p_1}{q_1}.
\end{equation*}
\item Upper and lower bounds: 
\begin{equation*}
\frac 1 {q_k(q_k+q_{k+1})}<|\alpha-\frac{p_k}{q_k}|<\frac 1 {q_kq_{k+1}}.
\end{equation*}
\item Best approximates property: The set of best approximates of $\alpha$ is exactly equal to $\{\frac{p_n}{q_n}\}_{n=1}^\infty$.
\end{enumerate}
\end{thm}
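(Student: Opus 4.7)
The plan is to prove these four classical facts in the order stated, each building on the previous, following the standard textbook treatment (e.g.\ Khinchin).

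For (1), I would induct on $n$ using the identity $[a_0; a_1, \ldots, a_n, x] = \frac{x p_n + p_{n-1}}{x q_n + q_{n-1}}$ valid for any real $x>0$, which itself is proved by induction on $n$ by peeling off the innermost entry. Setting $x = a_{n+1}$ yields the first two recursions. The determinant identity $p_n q_{n-1} - p_{n-1} q_n = (-1)^{n+1}$ follows from the same induction: $p_{n+1}q_n - p_n q_{n+1} = (a_{n+1} p_n + p_{n-1})q_n - p_n(a_{n+1}q_n + q_{n-1}) = -(p_n q_{n-1} - p_{n-1} q_n)$. Dividing by $q_{n-1}q_n$ gives the third formula.

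For (2) and (3), I would exploit the same identity with $x = \alpha_{n+1} := [a_{n+1}; a_{n+2}, \ldots]$, which gives $\alpha = \frac{\alpha_{n+1} p_n + p_{n-1}}{\alpha_{n+1}q_n + q_{n-1}}$. Combining with the determinant identity yields the clean formula
\[
\alpha - \frac{p_n}{q_n} = \frac{(-1)^{n}}{q_n(\alpha_{n+1}q_n + q_{n-1})}.
\]
The sign alternates with $n$, which together with the monotone difference formula from (1) produces the interleaving in (2). For (3), the bounds $a_{n+1} < \alpha_{n+1} < a_{n+1}+1$ give $q_{n+1} < \alpha_{n+1}q_n + q_{n-1} < q_{n+1} + q_n$, and substituting into the displayed formula yields both inequalities immediately.

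Part (4) is the subtlest step and will be the main obstacle. For the forward direction, suppose $p'/q'$ with $0 < q' \le q_n$ were a strictly better approximate than $p_n/q_n$. Since the matrix with rows $(p_{n-1}, q_{n-1})$ and $(p_n, q_n)$ has determinant $\pm 1$, one can solve $(p', q') = x(p_{n-1}, q_{n-1}) + y(p_n, q_n)$ in integers; a sign analysis using the alternating property of (2) forces $x, y$ to have opposite signs (or one to vanish), whence $|q'\alpha - p'| \ge |q_{n-1}\alpha - p_{n-1}| > |q_n\alpha - p_n|$, a contradiction. For the converse, I would use a Farey/mediant argument: no rational with denominator at most $q_{n+1}$ lies strictly between $p_n/q_n$ and $p_{n+1}/q_{n+1}$ other than $p_{n+1}/q_{n+1}$ itself, so any best approximate with denominator in the range $(q_n, q_{n+1}]$ must equal $p_{n+1}/q_{n+1}$. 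Since the arguments are entirely standard, I would likely present just the key identity and cite a source such as Khinchin for the full case analysis of (4).
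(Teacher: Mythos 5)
Your treatment of (1)--(3) is correct and is exactly the standard argument; the paper offers no proof of this theorem at all (it simply cites Khinchin), and your identities --- the induction identity $[a_0;a_1,\ldots,a_n,x]=\frac{xp_n+p_{n-1}}{xq_n+q_{n-1}}$, the determinant relation, and $\alpha-\frac{p_n}{q_n}=\frac{(-1)^n}{q_n(\alpha_{n+1}q_n+q_{n-1})}$ with $a_{n+1}<\alpha_{n+1}<a_{n+1}+1$ --- are precisely how the cited source does it. The forward half of (4) (the unimodular change of basis, the sign analysis forcing $x,y$ of opposite signs, and $|q'\alpha-p'|\ge|q_{n-1}\alpha-p_{n-1}|>|q_n\alpha-p_n|$) is also correct and shows that every convergent is a best approximate.

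The converse half of (4) has a genuine gap. With the paper's definition (comparing $|\alpha-p/q|$, i.e.\ best approximation \emph{of the first kind}), a best approximate $p/q$ with $q_n<q\le q_{n+1}$ need not lie strictly between $p_n/q_n$ and $p_{n+1}/q_{n+1}$: the hypothesis only gives $|\alpha-p/q|\le|\alpha-p_n/q_n|$, and since $p_{n+1}/q_{n+1}$ is much closer to $\alpha$ than $p_n/q_n$ is, the fraction $p/q$ may sit on the far side of $\alpha$, beyond $p_{n+1}/q_{n+1}$, where your Farey-neighbour observation says nothing. In fact the literal statement is false for general $\alpha$: for $\alpha=\pi$ the semiconvergent $13/4$ satisfies $|\pi-13/4|\le|\pi-p'/q'|$ for every $0<q'\le 4$, yet it is not a convergent; such semiconvergent examples arise whenever some partial quotient is at least $3$, so they are not excluded by the special $\alpha$ of Section 3 (where $a_{n_k+1}=k+8$). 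What is true, what Khinchin actually proves, and what the paper really uses afterwards (the closest-return-times property in Theorem \ref{T:R} concerns $d(x,R^qx)=\min_p|q\alpha-p|$) is the characterization of best approximations \emph{of the second kind}: the $q_n$ are exactly the $q$ minimizing $|q\alpha-p|$ over all $0<q'\le q$. Your lattice/sign argument is the right tool for precisely that version: run it with the basis $(p_n,q_n),(p_{n+1},q_{n+1})$ to get $|q'\alpha-p'|\ge|q_n\alpha-p_n|$ for all $0<q'<q_{n+1}$, which yields both inclusions at once. So either prove (and state) the second-kind version, or weaken (4) to the inclusion your forward argument establishes; as written, the Farey step cannot be repaired.
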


For proofs of any of these facts, see \cite{khinchin}. In particular, see page 36 for the upper and lower bounds.

\bold{Rotations.} Let $S^1=\bR/\bZ$ denote the circle, and let $R:S^1\to S^1$ denote rotation by $\alpha$, so $R(x)=x+\alpha.$ We will often implicitly identify the circle with the interval $[0,1)$. 

Let $d$ denote the distance on the circle coming from the standard distance on $\bR$. 

Given $x\in S^1$, define a closest return time as positive integer $q$ such that if $0<q'<q$, then $$d(x,R^{q'}(x))>d(x,R^q(x)).$$ This definition does not depend on $x$. Define the orbit segment of length $q$ of $x\in S^1$ to be the sequence $\{R^i(x)\}_{i=0}^{q-1}$.  

We will say that a subset of $S^1$ is \emph{$\delta$-separated} if the distance between any two distinct points in the subset is at least $\delta$. 

\begin{thm}\label{T:R}
The following hold. 
\begin{enumerate}
\item Alternating property: For $n$ odd $R^{q_n}(x)\in x+(-\frac12,0)$, and for $n$ even $R^{q_n}(x)\in x+(0,\frac12)$.
\item Upper and lower bounds: 
\begin{equation*}
\frac 1 {q_n+q_{n+1}}< d(x, R^{q_n}x) < \frac 1 {q_{n+1}}.
\end{equation*}
\item Best approximates property: The closest return times are exactly $\{q_n\}_{n=1}^\infty$.
\item Separation: Any orbit segment of length at most $q_n$ is at least $d(x, R^{q_{n-1}}x)$ separated. 
\item Equidistribution: $\{R^i0\}_{i=0}^{q_n-1}$ contains exactly one point of each interval $[\frac{i}{q_n}, \frac{i+1}{q_n})$, for $i=0, \ldots, q_n-1$. 
\item Denjoy-Koksma Inequality: For any function $g:[0,1)\to\bR$ of bounded variation, and any $x\in S^1$,
$$\left| \sum_{i=0}^{q_n-1} g(R^i(x)) - q_n \int g\right| \leq \Var(g),$$
where $\Var(g)$ is the total variation of $g$. 
\end{enumerate}
\end{thm}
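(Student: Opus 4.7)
The plan is to derive each of the six assertions from Theorem \ref{T:CF} together with the identity $R^n(x) - x \equiv n\alpha \pmod 1$. Setting $\beta_n := q_n\alpha - p_n$, we have $R^{q_n}(x) - x \equiv \beta_n \pmod 1$ and hence $d(x, R^{q_n}x) = |\beta_n|$ (for all but the smallest $n$, where $|\beta_n| < 1/2$). Assertion (1) is then the alternating property in Theorem \ref{T:CF}(2) restated for $\beta_n$, and (2) follows from Theorem \ref{T:CF}(3) by multiplying through by $q_n$.

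For (3), I would establish that the set of closest return times coincides with $\{q_n\}$. The recursion $q_n = a_n q_{n-1} + q_{n-2}$ and its analogue for $p_n$ yield $|\beta_{n-2}| = a_n |\beta_{n-1}| + |\beta_n|$, so $\{|\beta_n|\}$ is strictly decreasing, and each $q_n$ is hence a closest return time. Conversely, for $0 < m < q_{n+1}$ with $m \neq q_n$, I would write $m$ in its Ostrowski representation $m = \sum_{j \leq n} c_j q_j$ (with $0 \leq c_j \leq a_{j+1}$ and the usual greedy constraint to ensure uniqueness) and use the alternating signs of $\beta_j$ from (1) to conclude $\|m\alpha\| > |\beta_n|$, ruling out a new closest return. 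Assertion (4) then follows immediately: for $0 < j < k \leq q_n - 1$, we have $0 < m := k-j < q_n$, so by (3) applied at the preceding level, $d(R^j x, R^k x) = \|m\alpha\| \geq |\beta_{n-1}|$.

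For (5), I would compare the orbit $\{i\alpha \bmod 1\}_{i=0}^{q_n-1}$ to the evenly spaced set $\{ip_n/q_n \bmod 1\}_{i=0}^{q_n-1}$. The latter is a permutation of $\{j/q_n\}_{j=0}^{q_n-1}$ because $\gcd(p_n, q_n) = 1$, and the perturbation $i(\alpha - p_n/q_n)$ has constant sign (determined by the parity of $n$) and magnitude strictly less than $1/q_n$ for $i < q_n$, by Theorem \ref{T:CF}(3). A careful tracking of which side of each $1/q_n$-boundary the perturbation pushes each ideal point, together with the appropriate convention for the intervals at the endpoints of $[0,1)$, confirms each interval contains exactly one orbit point (alternatively, this follows from the three-distance theorem).

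Finally, (6) is the Denjoy-Koksma inequality, which I would derive from the orbit partition structure of (5) applied at an arbitrary base point $x$: the $q_n$ orbit points partition the circle into $q_n$ arcs each of length of order $1/q_n$, with each arc $I_j$ containing exactly one orbit point $y_j$. Writing the left-hand side as $\sum_j \bigl(g(y_j) - q_n \int_{I_j} g\bigr)$, each summand can be bounded by the variation of $g$ on an associated neighborhood of $I_j$, and summing telescopes to $\Var(g)$. The main obstacle in this plan is the Ostrowski-based argument for (3), which requires careful sign tracking to handle all the terms in the representation; the other assertions then follow from short computations.
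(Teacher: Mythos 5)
Your proposal is correct and takes essentially the same route as the paper, which simply records that parts (1)--(4) follow from Theorem \ref{T:CF}, that (5) follows elementarily from the bounds there, and that (6) follows from (5), deferring all details to the cited notes of Lima and the paper of Herman; you are filling in exactly those details in the standard way. The only caveat concerns (5): for odd $n$ the perturbation $i(\alpha-p_n/q_n)$ is negative, so with base point $0$ and the half-open intervals $[i/q_n,(i+1)/q_n)$ exactly as written the interval containing $0$ can receive two orbit points while another receives none (e.g.\ $\alpha=\sqrt{2}-1$, $n=1$), so the endpoint convention must be flipped (or the partition shifted) for odd $n$ --- a defect of the statement itself rather than of your argument, and harmless in every later use, including your derivation of Denjoy--Koksma, where one should take the partition into intervals of length exactly $1/q_n$ based at $x$ so that each summand is controlled by the oscillation of $g$ on its own interval.
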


The first three statements follow from the corresponding statements in Theorem \ref{T:CF}. Separation follows from the best approximates property. The equidistribution property follows in an elementary way from the bounds in Theorem \ref{T:CF}, and the Denjoy-Koksma inequality follows from the equidistribution property. For proofs of the equidistribution property and Denjoy-Koksma, we highly recommend the blog post of Lima \cite{Mat} (see Lemma 5 and Theorem 6), which is partially based on the paper by Herman \cite{Her} where Denjoy-Koksma was first proven. 

\section{Picking $\alpha$}\label{S:alpha}

In this paper, we will require  $\alpha$ with very special properties. Precisely, we will require the existence of a subsequence $n_k$ of the positive integers such that the following assumptions on $n_k$ and the continued fraction expansion of $\alpha$ hold.  Let $\fp{y}$ denote the fractional part of a real number $y$, so $\fp{y}\in[0,1)$. Note that by the Alternating Property, $d(x, R^{q_i}(x))=\fp{q_i \alpha}$ if $i$ is even, and  $d(x, R^{q_i}(x))=1-\fp{q_i \alpha}$ if $i$ is odd.

\bold{Assumptions.}
\begin{enumerate}
\item \label{A:even}
All $n_k$ are even.
\item \label{A:12}
$\sum_{k=1}^\infty 2\fp{q_{n_k} \alpha}<\min(\frac{\alpha}4,1-\frac{\alpha}4). $
\item \label{A:toavoid0}
For all $\ell\geq 1$, we have $1+\sum_{k=1}^\ell 2 q_{n_k} < q_{n_\ell+1}.$
\item \label{A:growing}
$\underset{\ell \to \infty}{\lim}\, a_{n_{\ell}+1}=\infty$.
\item \label{A:a_nsmall}
$a_{n}=o(\log \log(q_n))$, and $a_{n+1}=o(\log \log(q_n))$. 
\item \label{A:infsum} 
$\sum_{k=1}^\infty a_{n_k+1}^{-1}=\infty.$
\item \label{A:a_nsmallsum}
$\sum_{i=1}^{n+1} a_i = O(\log(q_n))$. 
\item \label{A:rapid up}
$\log(q_{n_{k-1}+1})=o(\log(q_{n_k}))$.
\item \label{A:2}
$a_{n_k}=2$ and $a_{n_k-1}=2$. 
\end{enumerate}
These assumptions are in effect for the remainder of this paper. They are stronger than necessary; we saw no benefit in trying to pick the weakest sufficient assumptions. 

\begin{rem}
Assumptions \ref{A:even}, \ref{A:12}, and \ref{A:toavoid0} allow us to fix notation. They have no particular dynamical significance for us. Assumption \ref{A:toavoid0} is used in proving unique ergodicity in Section \ref{sec:ue}, and a few other places, where it guarantees that the sequence $q_{n_k}$ grows very rapidly. 

Assumption \ref{A:growing} indicates that at time $q_{n_{\ell}}$, orbits come back very close to themselves, and for a long time after that they almost repeat their paths. This will govern transfer of mass between invariant subsets of certain non-minimal approximates of $T$. Assumption \ref{A:infsum} guarantees that $T$ is uniquely ergodic.

Assumptions \ref{A:a_nsmall} and \ref{A:a_nsmallsum} reflect that the continued fraction has mostly small coefficients, and hence $\alpha$ is poorly approximated by rationals, so the rotation by $\alpha$ has especially good equidistribution properties. These assumptions are important in estimating Birkhoff sums.

Assumption \ref{A:rapid up} indicates that the times $n_k$ are chosen so sparsely that orbits of length $q_{n_{k}}$ come back vastly closer to themselves than orbits of length $q_{n_{k-1}}$. It will be used in Lemma \ref{L:insignificant1}. 

Assumptions \ref{A:2} is  used  in the proofs of unique ergodicity of $T$ and $\hat{T}$, the multi-interval exchange  that we use to prove Theorem \ref{T:main}.
\end{rem}

 To verify that the assumptions are mutually compatible, we show the following. 
\begin{prop}
Define a sequence $n_k$ recursively by $$n_1=10\quad\quad\text{and}\quad\quad n_k=10^{k^2}n_{k-1}.$$ Define $\alpha$ by specifying its continued fraction expansion as follows: 
$$a_{n_{k}+1}=k+8 \text{ for all }k,\quad\quad \text{and}\quad\quad a_i=2\text{ if }i\notin\{n_k+1\}_{k\in \mathbb{N}}.$$ Then all the above assumptions are satisfied. 
\end{prop}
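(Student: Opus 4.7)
The plan is to verify each of the nine assumptions directly, relying on the recursion $q_{n+1}=a_{n+1}q_n+q_{n-1}$ from Theorem \ref{T:CF}, the fact that $a_i\in\{2\}\cup\{k+8:k\ge 1\}$ with $a_i\ne 2$ only when $i=n_k+1$, and the recursive definition $n_k=10^{k^2}n_{k-1}$. Most of the nine conditions have enormous slack because $n_k$ is chosen to grow much faster than needed for any one inequality.

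The key preliminary estimates are the following. First, since $a_i\ge 2$ for all $i$, comparison with the sequence $r_n=2r_{n-1}+r_{n-2}$ gives $q_n\ge c(1+\sqrt 2)^n$, so $\log q_n=\Omega(n)$. Conversely $q_n\le\prod_{i=1}^n(a_i+1)$, hence
\begin{equation*}
\log q_n\le n\log 3+\sum_{k:\,n_k+1\le n}\log\frac{k+9}{3}=n\log 3+o(n),
\end{equation*}
where the error is controlled by the second estimate below. Thus $\log q_n=\Theta(n)$ and $\log\log q_n=\log n+O(1)$. Second, unwinding the recursion for $n_k$ yields
\begin{equation*}
\log n_k=\left(\sum_{j=1}^{k}j^2\right)\log 10=\frac{k(k+1)(2k+1)}{6}\log 10\sim\frac{k^3}{3}\log 10,
\end{equation*}
so the number $k(n)$ of indices $k$ with $n_k\le n$ is $O((\log n)^{1/3})$.

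Assumptions (\ref{A:even}), (\ref{A:growing}), (\ref{A:infsum}), and (\ref{A:2}) are immediate: $n_1=10$ and each $10^{k^2}$ is even; $a_{n_\ell+1}=\ell+8\to\infty$; $\sum(k+8)^{-1}$ is a tail of the harmonic series; and the gap $n_k-n_{k-1}\ge 9n_{k-1}\ge 90$ prevents $n_k$ or $n_k-1$ from coinciding with any $n_j+1$. For (\ref{A:12}), Theorem \ref{T:CF}(3) gives $\fp{q_{n_k}\alpha}<1/q_{n_k+1}\le 1/((k+8)q_{n_k})$; a direct computation using $a_1=\cdots=a_{10}=2$ gives $q_{10}=5741$ and hence $q_{11}\ge 9q_{10}\ge 54000$, so the first term is on the order of $10^{-5}$, and later terms are vastly smaller, giving total sum far below $\min(\alpha/4,1-\alpha/4)$ since $\alpha\in(0,1)$ is easily seen to be bounded away from the endpoints. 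For (\ref{A:toavoid0}), $q_{n_\ell+1}\ge(\ell+8)q_{n_\ell}$, while the super-exponential growth of $q_{n_k}$ (since $n_\ell-n_{\ell-1}\ge 9n_{\ell-1}$ contributes a factor of at least $(1+\sqrt 2)^{9n_{\ell-1}}$ between $q_{n_{\ell-1}}$ and $q_{n_\ell}$) shows $\sum_{k<\ell}q_{n_k}\ll q_{n_\ell}$; therefore $1+\sum_{k\le\ell}2q_{n_k}\le 3q_{n_\ell}<q_{n_\ell+1}$.

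For (\ref{A:a_nsmall}), the worst case is $n=n_k+1$, where $a_n=k+8$ and $\log\log q_n\sim\log n_k\sim\frac{k^3}{3}\log 10$, so $a_n=o(\log\log q_n)$; the bound on $a_{n+1}$ follows similarly by considering $n=n_k$. For (\ref{A:a_nsmallsum}), using the preliminary estimates,
\begin{equation*}
\sum_{i=1}^{n+1}a_i\le 2(n+1)+\sum_{k:\,n_k+1\le n+1}(k+6)\le 2(n+1)+O((\log n)^{2/3})=O(n)=O(\log q_n).
\end{equation*}
Finally, for (\ref{A:rapid up}), $\log q_{n_{k-1}+1}/\log q_{n_k}=O((n_{k-1}+1)/n_k)=O(10^{-k^2})\to 0$. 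The proof is essentially routine arithmetic; the only mild obstacle is packaging the elementary estimates cleanly, since the aggressive growth of $n_k$ provides orders of magnitude of slack in every inequality and no assumption comes close to failing.
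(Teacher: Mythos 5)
Your proof is correct and follows essentially the same route as the paper: you establish the same two preliminary facts (that $\log q_n=\Theta(n)$ because all partial quotients are $\ge 2$ and only a sparse, slowly growing set of them exceeds $2$, and that the number of $n_k\le n$ is tiny compared to $\log q_n$) and then check the nine assumptions one by one. The only cosmetic differences are that you use $(1+\sqrt 2)^n$ where the paper settles for $2^n$, you compute $q_{10}=5741$ explicitly rather than using the cruder $q_n>2^n$, and you obtain the sharper count $O((\log n)^{1/3})$ for $\#\{k:n_k\le n\}$ from the closed form $\log_{10}n_k=\sum_{j\le k}j^2$, whereas the paper gets $o(\log\log q_n)$ by citing Assumption (5); none of these changes the substance. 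One small thing you should tighten: for Assumption (2) the paper pins down $\frac{2}{5}<\alpha<\frac{1}{2}$ via the alternating property applied to $p_1/q_1$ and $p_2/q_2$, while you only assert $\alpha$ is "easily seen to be bounded away from the endpoints" — you should state the concrete convergent bound, since $\min(\alpha/4,1-\alpha/4)$ appears explicitly in the inequality.
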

\begin{proof}
We will establish the assumptions one at a time. 
\begin{enumerate}
\item Obvious. 
\item By the upper bound in Theorem \ref{T:CF}, $\fp{q_{n_k} \alpha}< \frac 1 {q_{n_k+1}}$. By the second recursive formula, and the fact that all $a_i$ are at least 2, we see that $q_{n}>2^n$. Since $n_1=10$, $\sum_{k=1}^\infty 2\fp{q_{n_k} \alpha}<\frac{4}{2^{10}}$. The alternating property in Theorem \ref{T:CF} (using $p_1/q_1$ and $p_2/q_2$) gives that $\frac25< \alpha <\frac 12$, so the result follows. 
\item Again by the recursive formula and the fact that all $a_i$ are at least 2, we have $q_{i+k}> 2^k q_i$. Since $a_{n_{k+1}}>2$
 for every $k$, we have in particular that $q_{n_k+1}> 3q_{n_k}$, whence the inequality follows by induction. 
\item Obvious.
\item Clear if $i \neq n_{k}, n_{k}+1$, and otherwise it follows  because $k^2=o(\log(n_k))$ and $q_i>2^i$. 
\item Obvious.
\item Let $i$ be the greatest number such that $n_i\leq n$. Then there are $i=a_{n_i+1}-8$ numbers $k$ such that $n_k\leq n$.
Thus by \ref{A:a_nsmall}, there are  $o(\log\log q_n)$ numbers $k$ such that $n_k\leq n$. Again by 5, each $a_{n_k+1}$ is $o(\log\log q_n)$. So, the sum is at most $2n + o(\log\log q_n)^2$. Since $n$ is $O(\log q_n)$, we have the claim. 

\item Since $q_{i+k}> 2^k q_i$, we have that $q_{n_{k+1}}> 2^{n_{k+1}-n_k} q_{n_k}$, so 
$$\log q_{n_{k+1}}> (n_{k+1}-n_k)\log(2) + \log q_{n_k}.$$
On the other hand, $q_{i+1}\leq (a_{i+1}+1)q_i$, so we have that 
$$\log(q_n)\leq \sum_{i=1}^n \log(a_{i}+1)=O(n),$$
since there are $O(n)$ terms where $\log(a_{i}+1)$ has size $\log(3)$, and at most $O(\log(n))$ terms where $i=n_k+1$ for some $k$ and hence  $(a_{i+1}+1)$ has size $\log(k+9)=O(\log(n))$.

Since $(n_{k+1}-n_k)/n_k\to \infty$, the result follows. 
\item Obvious.
\end{enumerate}
\end{proof}

\section{A skew product over a rotation}\label{S:skew}
This section develops the Katok-Sataev-Veech  construction \cite{v_skew, kat_skew, sat_skew} in a manner convenient for our purposes.

We now define a $\bZ_2=\{0,1\}$ skew product over the circle rotation $R$ by angle $\alpha$. Set $$J=\left[0, \sum_{k=1}^\infty 2\fp{q_{n_k} \alpha}\right).$$
By Assumption \ref{A:12}, we have that $J$ has length less than $1$. 

Define a skew product $T$ to be the rotation by $\alpha$ skewed over the interval $J$, so
$$T:S^1\times\bZ_2\to S^1\times\bZ_2, \quad \quad T(x, j) = (R(x), j+\chi_J(R(x))).$$
Here $\chi_J$ denotes the characteristic function of $J$. We will denote the coordinate projections as 
$$\pi_{S^1}:S^1\times\bZ_2\to S^1,\quad\quad\pi_{\bZ_2}:S^1\times\bZ_2\to\bZ_2.$$
We denote by $\iota:S^1\times\bZ_2\to S^1\times\bZ_2$ the involution $\iota(x,j)=(x,j+1)$.

\textbf{Notation.} 
Set $$J_\ell=\left[\sum_{k=1}^{\ell-1} 2\fp{q_{n_k} \alpha},\sum_{k=1}^{\ell} 2\fp{q_{n_k} \alpha}\right),$$
so  $J$ is the disjoint union of the $J_\ell$. Let 
\begin{eqnarray*}
J_\ell'&=&\left[\sum_{k=1}^{\ell-1} 2\fp{q_{n_k} \alpha},\sum_{k=1}^{\ell-1}2\fp{q_{n_k} \alpha}+\fp{q_{n_\ell} \alpha}\right),\\
J_\ell''&=&\left[\sum_{k=1}^{\ell-1} 2\fp{q_{n_k} \alpha}+\fp{q_{n_\ell} \alpha},\sum_{k=1}^{\ell}2\fp{q_{n_k} \alpha}\right)
\end{eqnarray*}
be the left and right halves of the interval $J_\ell$, so $J_\ell=J_\ell'\cup J_\ell''.$ 
 
\begin{figure}[h]
\includegraphics[scale=0.4]{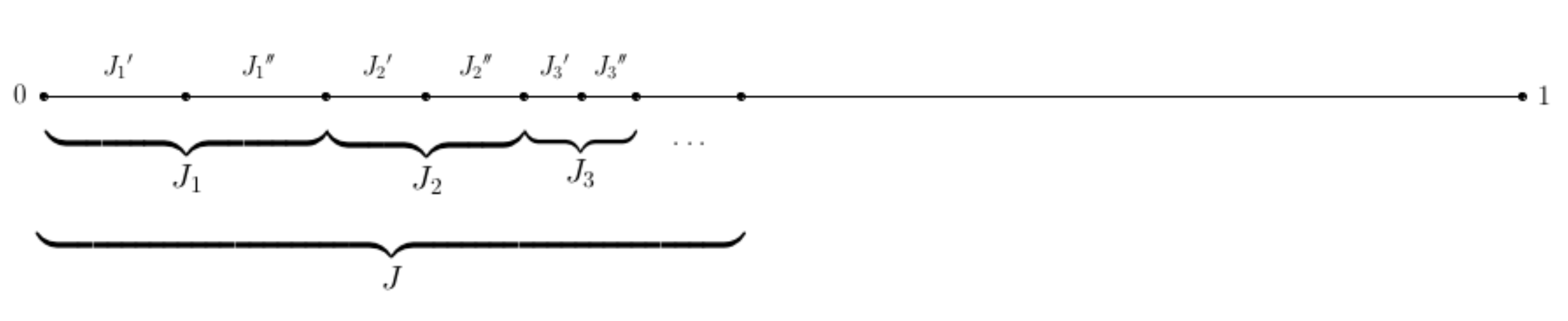}
\end{figure}

For any subset $Y\subset S^1$, we set $$\tilde{Y}=Y\times \bZ^2=\pi_{S^1}^{-1}(Y).$$

\textbf{Moving a bit of one invariant set into another.} Here we consider a very general construction. The notation has been chosen to match the situation to which the lemma will be applied.  
\begin{lem}\label{L:UV} 
Let $T_\ell:X\to X$ be an invertible transformation with two invariant sets $U_\ell$ and $V_\ell=X\setminus U_\ell$. Suppose there is an involution $\iota:X\to X$ that interchanges $U_\ell$ and $V_\ell$. Let $\tilde{J}'_{\ell+1}\subset X$ be $\iota$ invariant, and let $q_{n_{\ell+1}}$ be any integer such that $$\tilde{J}'_{\ell+1}, T_\ell(\tilde{J}'_{\ell+1}), \ldots, T_\ell^{q_{n_{\ell+1}}}(\tilde{J}'_{\ell+1})$$ are disjoint. Consider the transformation $T_{\ell+1}$ on $X$ defined by 
$$T_{\ell+1}(x) = \begin{cases}
\iota(T_\ell(x)) &\mbox{if } T_\ell(x)\in \tilde{J}'_{\ell+1}\cup T_\ell^{q_{n_{\ell+1}}}(\tilde{J}'_{\ell+1}) \\
T_\ell(x) &\mbox{otherwise.}
\end{cases}$$
Then the set 
$$U_{\ell+1}\quad=\quad \left(U_\ell\quad\cup\quad\bigcup_{i=0}^{q_{n_{\ell+1}}-1}T_\ell^i(\tilde{J}'_{\ell+1}\cap V_\ell)\right)\quad\setminus\quad \bigcup_{i=0}^{q_{n_{\ell+1}}-1}T_\ell^i(\tilde{J}'_{\ell+1} \cap U_\ell)$$ 

is $T_{\ell+1}$ invariant, as is $V_{\ell+1}=X\setminus U_{\ell+1} = \iota(U_{\ell+1}).$
\end{lem}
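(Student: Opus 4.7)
The plan is to verify directly, by case analysis, that $T_{\ell+1}(U_{\ell+1}) \subseteq U_{\ell+1}$; invertibility of $T_{\ell+1}$ then yields equality, and the invariance of $V_{\ell+1} = X \setminus U_{\ell+1}$ follows. First I would fix notation by setting $A = \tilde{J}'_{\ell+1} \cap U_\ell$ and $B = \tilde{J}'_{\ell+1} \cap V_\ell$. Since $\tilde{J}'_{\ell+1}$ is $\iota$-invariant and $\iota$ swaps $U_\ell, V_\ell$, we have $\iota(A) = B$. Because $U_\ell$ and $V_\ell$ are $T_\ell$-invariant, $T_\ell^iA \subseteq U_\ell$ and $T_\ell^iB \subseteq V_\ell$ for all $i \ge 0$. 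By the disjointness hypothesis, the two ``towers'' $\bigcup_{i=0}^{q_{n_{\ell+1}}-1} T_\ell^i A$ and $\bigcup_{i=0}^{q_{n_{\ell+1}}-1} T_\ell^i B$ are disjoint unions sitting inside $U_\ell$ and $V_\ell$ respectively, so the formula defining $U_{\ell+1}$ exchanges these two towers and $U_{\ell+1}\sqcup V_{\ell+1}=X$. The additional claim $V_{\ell+1}=\iota(U_{\ell+1})$ uses the commutation $\iota\circ T_\ell=T_\ell\circ\iota$, which holds in the intended application to a skew product with $\iota$ flipping the $\bZ_2$-fiber; I would note this once.

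Rewriting the rule in terms of $x$: $T_{\ell+1}(x)=\iota(T_\ell(x))$ precisely when $x\in T_\ell^{-1}(\tilde{J}'_{\ell+1})\cup T_\ell^{q_{n_{\ell+1}}-1}(\tilde{J}'_{\ell+1})$, and $T_{\ell+1}(x)=T_\ell(x)$ otherwise. The disjointness hypothesis ensures that these two ``flip levels'' are disjoint from each other and from the internal tower levels $T_\ell^i(\tilde{J}'_{\ell+1})$, $0\le i\le q_{n_{\ell+1}}-2$, which makes the bookkeeping clean. Now fix $x\in U_{\ell+1}$ and consider three cases:
\begin{enumerate}
\item[(i)] $T_\ell(x)\notin \tilde{J}'_{\ell+1}\cup T_\ell^{q_{n_{\ell+1}}}(\tilde{J}'_{\ell+1})$. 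Then $T_{\ell+1}(x)=T_\ell(x)$, and $T_\ell$-invariance of $U_\ell$ and $V_\ell$, together with the fact that $T_\ell$ simply shifts each tower level one step up within the tower, guarantees that $T_\ell(x)$ lies in the same piece of the partition defining $U_{\ell+1}$.
\item[(ii)] $T_\ell(x)\in \tilde{J}'_{\ell+1}$. Then $x\in T_\ell^{-1}(\tilde{J}'_{\ell+1})$ lies outside both towers, so $x\in U_{\ell+1}$ forces $x\in U_\ell$ and hence $T_\ell(x)\in A$; then $T_{\ell+1}(x)=\iota(T_\ell(x))\in \iota(A)=B\subseteq U_{\ell+1}$.
\item[(iii)] $T_\ell(x)\in T_\ell^{q_{n_{\ell+1}}}(\tilde{J}'_{\ell+1})$. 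Then $x\in T_\ell^{q_{n_{\ell+1}}-1}(\tilde{J}'_{\ell+1})$, the top tower level. Since $T_\ell^{q_{n_{\ell+1}}-1}(A)$ belongs to the removed $A$-tower, $x\in U_{\ell+1}$ forces $x\in T_\ell^{q_{n_{\ell+1}}-1}(B)$; then, using commutation, $T_{\ell+1}(x)=\iota(T_\ell(x))\in T_\ell^{q_{n_{\ell+1}}}(A)\subseteq U_\ell$, which lies outside the $A$-tower by disjointness (the tower only uses indices $i\le q_{n_{\ell+1}}-1$), hence in $U_{\ell+1}$.
\end{enumerate}

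The main obstacle, if any, is purely notational bookkeeping of the tower levels and the two flip sites. The hypothesis that $q_{n_{\ell+1}}+1$ iterates of $\tilde{J}'_{\ell+1}$ are pairwise disjoint is exactly what is needed to prevent a cascading flip: both the pre-image of the base and the image just above the top are safe, so each orbit passing through the tower undergoes exactly the two flips at entry and exit, returning to its original $U_\ell$-or-$V_\ell$ side after the excursion. Invariance of $V_{\ell+1}$ then follows by invertibility (since $U_{\ell+1}\sqcup V_{\ell+1}=X$ and $T_{\ell+1}$ is a bijection), or equivalently by the symmetric argument with $A$ and $B$ interchanged.
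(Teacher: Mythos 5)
Your proof is correct. The paper offers no argument for this lemma at all --- it is explicitly left to the reader with a schematic figure --- so there is no official proof to compare against, and your tower-by-tower case analysis (interior of the tower, entry flip, exit flip) is precisely the verification that is being omitted; the disjointness of $\tilde{J}'_{\ell+1}, T_\ell(\tilde{J}'_{\ell+1}),\ldots, T_\ell^{q_{n_{\ell+1}}}(\tilde{J}'_{\ell+1})$ is used in the right places (it keeps the pre-image of the base and the image above the top level clear of the towers, so exactly two flips occur per excursion). One remark worth retaining: the commutation $\iota\circ T_\ell = T_\ell\circ\iota$, which you correctly flag, is not among the stated hypotheses but is genuinely needed --- not only for the claim $V_{\ell+1}=\iota(U_{\ell+1})$ and for your case (iii), but also for the invertibility of $T_{\ell+1}$ that you invoke, since one needs $\iota$ to preserve the flip set $\tilde{J}'_{\ell+1}\cup T_\ell^{q_{n_{\ell+1}}}(\tilde{J}'_{\ell+1})$, which again follows from commutation; without some such hypothesis one can build examples (with $\iota$ acting nontrivially on the base) where $U_{\ell+1}$ fails to be invariant. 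Since in the only application $T_\ell$ is a skew product over $R$ and $\iota$ flips the $\bZ_2$ fibre (the setting of Lemma \ref{L:invar}), the commutation holds and your proof goes through as written.
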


\begin{figure}[h]
\includegraphics[scale=0.5]{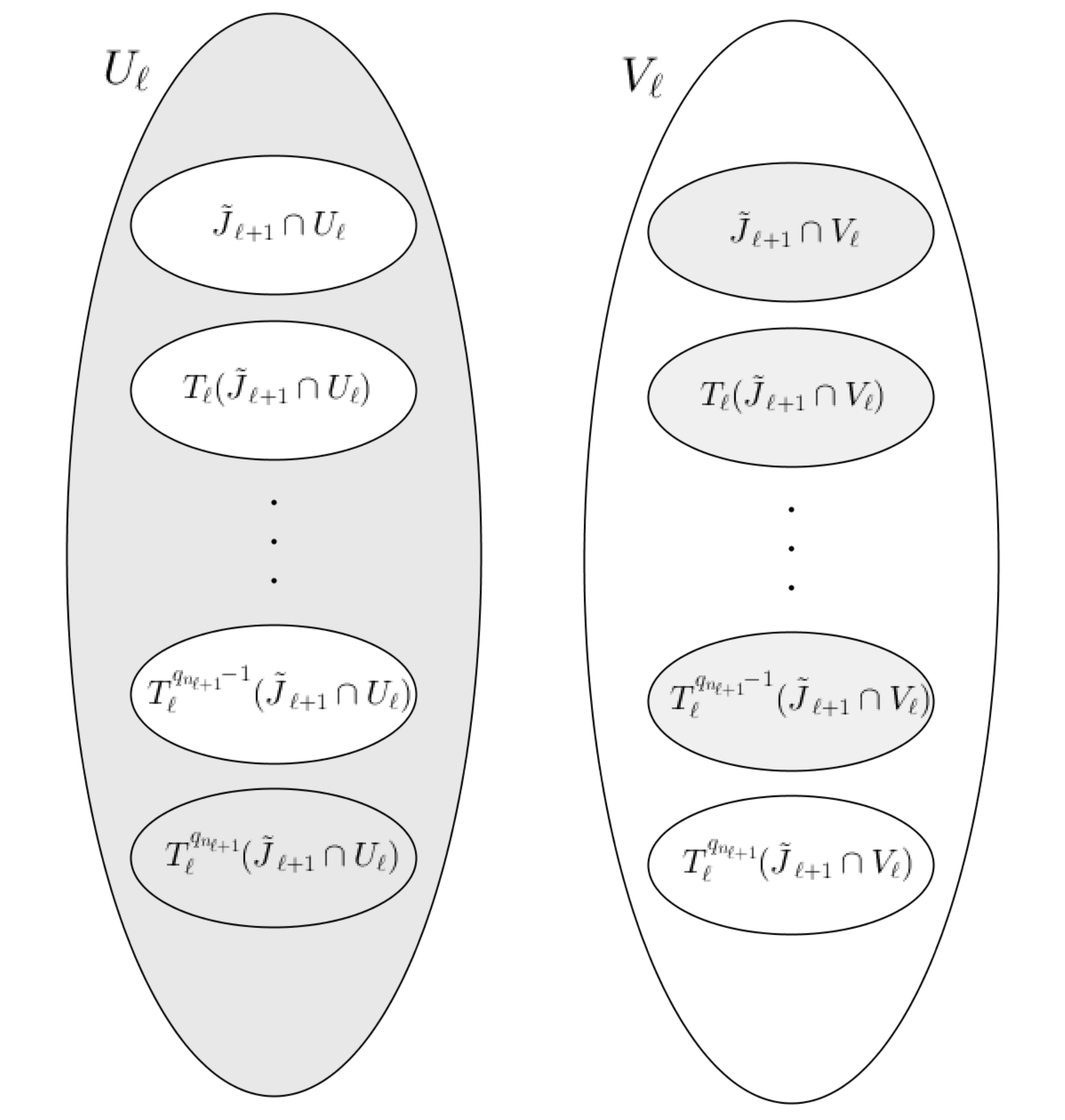}
\caption{Schematic picture of Lemma \ref{L:UV}. The set $U_{\ell+1}$ is shaded.} 
\label{F:UV}
\end{figure}

The proof of Lemma \ref{L:UV} is left to the reader, who is invited to convince himself or herself while looking at Figure \ref{F:UV}.

\textbf{Non-minimal approximates.}
Define $T_\ell:[0,1]\times\bZ_2\to[0,1]\times\bZ_2$ to be the rotation by $\alpha$ skewed over the interval $[0, \sum_{k=1}^\ell 2\fp{q_{n_k} \alpha})$, so
$$T_\ell(x, j) = (R(x), j+\chi_{[0, \sum_{k=1}^\ell 2\fp{q_{n_k} \alpha})}(R(x))).$$

Set $U_0=[0,1]\times\{1\}$ and $V_0=[0,1]\times\{0\}$, and for each $\ell\geq 0$ define
\begin{eqnarray*}
U_{\ell+1}\quad= &&\left(U_\ell\quad\cup\quad\bigcup_{i=0}^{q_{n_{\ell+1}}-1}T_\ell^i(\tilde{J}'_{\ell+1}\cap V_\ell)\right)
\\&\setminus& \bigcup_{i=0}^{q_{n_{\ell+1}}-1}T_\ell^i(\tilde{J}'_{\ell+1}\cap U_\ell)
\end{eqnarray*}
and $V_{\ell+1}=U_{\ell+1}^c$. 

We will show that $U_\ell$ and $V_\ell$ are $T_\ell$ invariant sets. Because $T_\ell$ is not minimal, and is equal to $T$ on a set of large measure, we refer to $T_\ell$ as a non-minimal approximate to $T$. Our understanding of $T$ will follow from a study of the $T_\ell$. Our understanding of $T_\ell$ will be inductive, using the fact that $T_\ell$ is almost the same as $T_{\ell-1}$ but with additional skewing.

\begin{lem}\label{L:dis}
For each $\ell>1$ the intervals
$$R^i(J'_\ell), \quad i=0,\ldots, q_{n_\ell}$$
are disjoint from each other and $[0,\fp{q_{n_{\ell}}\alpha})$ and $[1-\fp{q_{n_{\ell}}\alpha}, 1)$.
\end{lem}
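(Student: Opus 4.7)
The plan is to convert the disjointness statements into separation bounds on iterates of $0$ under $R$. Write $L = \fp{q_{n_\ell}\alpha}$, so $J'_\ell = [a, a+L)$ with $a = \sum_{k=1}^{\ell-1} 2\fp{q_{n_k}\alpha}$, and each $R^i(J'_\ell)$ is a half-open arc of length $L$ starting at $R^i(a)$. Two half-open arcs of length $L$ with starting points at circle-distance at least $L$ are automatically disjoint, so it suffices to show that the points $R^i(a)$ for $0 \leq i \leq q_{n_\ell}$ are $L$-separated from each other and each at circle-distance at least $L$ from both $0$ and $1-L$.

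The first step is to observe that $a$ itself lies in the forward orbit of $0$ under $R$. Since each $n_k$ is even by Assumption \ref{A:even} and $\fp{q_{n_k}\alpha} < \tfrac12$ by Theorem \ref{T:R}(1), we have $2\fp{q_{n_k}\alpha} \equiv 2q_{n_k}\alpha \pmod 1$, so $a \equiv N\alpha \pmod 1$ where $N = \sum_{k=1}^{\ell-1} 2q_{n_k}$. Therefore $R^i(a) = (N+i)\alpha \bmod 1$, and since $L \equiv q_{n_\ell}\alpha \pmod 1$ we also have $R^i(a) + L \equiv (N + i + q_{n_\ell})\alpha \pmod 1$. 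Assumption \ref{A:toavoid0} applied with $\ell$ guarantees that for $0 \leq i \leq q_{n_\ell}$ both indices $N+i$ and $N + i + q_{n_\ell}$ lie in $\{1, \ldots, q_{n_\ell+1} - 1\}$.

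All three disjointness claims then reduce to the separation property Theorem \ref{T:R}(4) applied to the orbit segment $\{j\alpha\}_{j=0}^{q_{n_\ell+1}-1}$, which is $d(0, R^{q_{n_\ell}}0) = L$-separated. Disjointness of $R^i(J'_\ell)$ from $[0, L)$ is the assertion $d((N+i)\alpha, 0) \geq L$; disjointness from $[1-L, 1)$ becomes $d(R^i(a), 1-L) \geq L$ on the circle, equivalently $d((N+i+q_{n_\ell})\alpha, 0) \geq L$; and pairwise disjointness of the arcs follows from the best-approximates statement Theorem \ref{T:R}(3) applied with $1 \leq |i-j| \leq q_{n_\ell}$, which gives $d(R^i(a), R^j(a)) = d(R^{|i-j|}(0), 0) \geq L$.

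The main subtlety, and the reason for separately tracking $R^i(a) + L$, is the asymmetry of $[0, L) \cup [1-L, 1)$ around $0$ on the circle: handling the left-hand arc $[1-L, 1)$ forces a shift of the index by $q_{n_\ell}$, and it is precisely to keep this shifted index $N + i + q_{n_\ell}$ below $q_{n_\ell+1}$ that the full strength of Assumption \ref{A:toavoid0} with the sum running up to $k = \ell$ (not $k = \ell-1$) is needed.
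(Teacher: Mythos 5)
Your proof is correct and is essentially the paper's argument: both reduce the statement to the fact that all the relevant sets are translates $R^N$ of the arc $[0,\fp{q_{n_\ell}\alpha})$ (equivalently, their left endpoints lie in a single orbit segment of $0$), use Assumption \ref{A:toavoid0} to keep all indices below $q_{n_\ell+1}$, and then apply the separation property of Theorem \ref{T:R} with separation constant $\fp{q_{n_\ell}\alpha}$. The only difference is presentational (you track endpoints and handle $[1-\fp{q_{n_\ell}\alpha},1)$ by shifting the index forward by $q_{n_\ell}$, where the paper writes it as $R^{-q_{n_\ell}}[0,\fp{q_{n_\ell}\alpha})$), so no further comparison is needed.
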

Recall $J_\ell'=[\sum_{k=1}^{\ell-1} 2\fp{q_{n_k} \alpha},\sum_{k=1}^{\ell-1}2\fp{q_{n_k} \alpha}+\fp{q_{n_\ell} \alpha}).$

\begin{proof}
Since $J'_\ell= R^{N_0}([0,\fp{q_{n_{\ell}}\alpha}))$ when $N_0=2\sum_{k=1}^{\ell-1} q_{n_k}$, the intervals in question are
$$R^{N}([0,\fp{q_{n_{\ell}}\alpha}), \quad N=-q_{n_{\ell}} \text{  and  } N=0 \text{  and  }N=N_0, \ldots, N_0+q_{n_\ell}.$$

Thus the intervals in question are contained in the $N_0+2q_{n_\ell}+1$ orbit of an interval of length  $\fp{q_{n_{\ell}}\alpha}$. By Assumption \ref{A:toavoid0}, $N_0+2q_{n_\ell}+1<q_{n_\ell+1}$, so the separation property gives that these intervals are disjoint. 
\end{proof}

\begin{lem}\label{L:invar}
$U_\ell$ and $V_\ell$ are $T_\ell$ invariant, and $V_\ell=\iota (U_\ell).$
\end{lem}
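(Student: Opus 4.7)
My approach is to prove the lemma by induction on $\ell$, using Lemma \ref{L:UV} to handle the inductive step. The base case $\ell=0$ is immediate: the interval $[0, \sum_{k=1}^{0} 2\fp{q_{n_k}\alpha})$ is empty, so $T_0(x,j)=(R(x),j)$ does no skewing, hence $U_0 = S^1\times\{1\}$ and $V_0 = S^1\times\{0\}$ are trivially $T_0$-invariant, and the involution $\iota(x,j)=(x,j+1)$ obviously interchanges them.

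For the inductive step, assume the lemma for $\ell$. I plan to invoke Lemma \ref{L:UV} with $X=S^1\times\bZ_2$, the given $T_\ell$, $U_\ell$, $V_\ell$, $\iota$, and with the set $\tilde{J}'_{\ell+1}$ and integer $q_{n_{\ell+1}}$. The required hypotheses are verified as follows. That $T_\ell$ is invertible is clear from its definition. That $U_\ell,V_\ell$ are $T_\ell$-invariant with $V_\ell=\iota(U_\ell)=X\setminus U_\ell$ is the inductive hypothesis. That $\iota$ is an involution (indeed fixed-point free) is immediate, and $\tilde{J}'_{\ell+1}=J'_{\ell+1}\times\bZ_2$ is $\iota$-invariant by construction. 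The only nontrivial hypothesis is disjointness of $\tilde{J}'_{\ell+1},T_\ell(\tilde{J}'_{\ell+1}),\ldots,T_\ell^{q_{n_{\ell+1}}}(\tilde{J}'_{\ell+1})$. Since $T_\ell$ acts on the $S^1$-coordinate exactly as $R$, and $\tilde{J}'_{\ell+1}$ is a full preimage under $\pi_{S^1}$, the iterate $T_\ell^i(\tilde{J}'_{\ell+1})$ projects onto $R^i(J'_{\ell+1})$, so disjointness of the lifts reduces to disjointness of the projections, which is exactly Lemma \ref{L:dis} applied with $\ell$ replaced by $\ell+1$.

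Granting these hypotheses, Lemma \ref{L:UV} produces a $T_{\ell+1}$-invariant $U_{\ell+1}$ (matching the formula in the paper) and gives $V_{\ell+1}=\iota(U_{\ell+1})=X\setminus U_{\ell+1}$ for free. The essential point I still must check is that the $T_{\ell+1}$ constructed inside Lemma \ref{L:UV} agrees with the $T_{\ell+1}$ defined at the start of this subsection. Both transformations coincide with $T_\ell$ except where they post-compose with $\iota$: the paper's version does so when $T_\ell(x)\in \tilde{J}_{\ell+1}=\tilde{J}'_{\ell+1}\cup \tilde{J}''_{\ell+1}$, while Lemma \ref{L:UV}'s version does so when $T_\ell(x)\in \tilde{J}'_{\ell+1}\cup T_\ell^{q_{n_{\ell+1}}}(\tilde{J}'_{\ell+1})$. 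Thus the matching reduces to the identity
$$T_\ell^{q_{n_{\ell+1}}}(\tilde{J}'_{\ell+1})\;=\;\tilde{J}''_{\ell+1}.$$
Projecting to $S^1$ (and using that $T_\ell$ permutes the fibers of $\pi_{S^1}$), this in turn reduces to $R^{q_{n_{\ell+1}}}(J'_{\ell+1})=J''_{\ell+1}$. Here I will invoke the alternating property of Theorem \ref{T:R}(1) together with Assumption \ref{A:even}: since $n_{\ell+1}$ is even, $R^{q_{n_{\ell+1}}}$ acts on $S^1$ as translation by $+\fp{q_{n_{\ell+1}}\alpha}$, which is exactly the length of $J'_{\ell+1}$; translating the left half $J'_{\ell+1}$ of $J_{\ell+1}$ by this amount lands on the right half $J''_{\ell+1}$, completing the verification.

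The main conceptual (rather than computational) obstacle is this last step: identifying $T_\ell^{q_{n_{\ell+1}}}(\tilde{J}'_{\ell+1})$ with $\tilde{J}''_{\ell+1}$, which is precisely the geometric input that ties together the choice of rotation number (through Assumption \ref{A:even}) with the combinatorial definition of the skewing interval, and makes the abstract Lemma \ref{L:UV} applicable to the concrete sequence $T_\ell$.
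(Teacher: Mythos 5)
Your proposal is correct and follows the same route as the paper: induction via Lemma \ref{L:UV}, with disjointness supplied by Lemma \ref{L:dis} and the key identification $T_\ell^{q_{n_{\ell+1}}}(\tilde{J}'_{\ell+1})=\tilde{J}''_{\ell+1}$ coming from the alternating property together with Assumption \ref{A:even}. You have simply spelled out more carefully than the paper the step where the skew-product definition of $T_{\ell+1}$ is reconciled with the abstract construction in Lemma \ref{L:UV}.
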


\begin{proof}
This is proven inductively using Lemma \ref{L:UV}, where the notation has been chosen to indicate how the result should now be applied.

$T_{\ell+1}$ is obtained from $T_\ell$ by additionally skewing over $J_{\ell+1}=J_{\ell+1}'\cup R^{q_{n_{\ell+1}}}(J_{\ell+1}').$ This additional skewing amounts to applying $\iota$ every time the orbit lands in $\tilde{J}_{\ell+1}'\cup T_\ell^{q_{n_{\ell+1}}}(\tilde{J}_{\ell+1}').$ Thus the definition of $T_{\ell+1}$ as a skew product coincides with the inductive definition of $T_{\ell+1}$ provided in Lemma \ref{L:UV}.

The disjointness condition in Lemma \ref{L:UV} has been verified in the previous lemma.
\end{proof}

\begin{thm}\label{T:skew}
For each integer $\ell\geq1$, there are sets $U_\ell, V_\ell$ whose disjoint union is $S^1\times\bZ_2$ such that $\iota(U_\ell)=V_\ell$ and the following properties hold. 
\begin{enumerate}
\item $U_{\ell}$ contains
  $$[0, \fp{q_{n_{\ell}}\alpha})\times \{0\}\quad\text{and}\quad[1-\fp{q_{n_{\ell}}\alpha},1)\times \{1\}.$$
\item There is a subinterval $J_\ell'$ of $S^1$ of length $\fp{q_{n_\ell}\alpha}$ such that 
$$\pi_{S_1}(U_\ell\setminus U_{\ell-1})=\cup_{i=0}^{q_{n_\ell-1}} R^i(J_\ell').$$
\end{enumerate}
\end{thm}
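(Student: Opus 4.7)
My plan is to derive the theorem directly from Lemmas \ref{L:invar} and \ref{L:dis} together with the monotonicity of $\fp{q_{n_\ell}\alpha}$, doing essentially just the bookkeeping around the inductive construction. Lemma \ref{L:invar} already supplies the disjoint decomposition $S^1\times\bZ_2=U_\ell\sqcup V_\ell$ with $V_\ell=\iota(U_\ell)$; the interval $J_\ell'$ appearing in property (2) is the one defined before Lemma \ref{L:dis}.

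For property (2), I would begin by rewriting the inductive definition as $U_\ell=(U_{\ell-1}\cup A_\ell)\setminus B_\ell$, where $A_\ell=\bigcup_{i=0}^{q_{n_\ell}-1} T_{\ell-1}^i(\tilde J_\ell'\cap V_{\ell-1})$ and $B_\ell$ is its $U_{\ell-1}$-analogue. Because $V_{\ell-1}$ is $T_{\ell-1}$-invariant (Lemma \ref{L:invar}), $A_\ell\subset V_{\ell-1}=U_{\ell-1}^c$, so $A_\ell\cap U_{\ell-1}=\emptyset$ and thus $U_\ell\setminus U_{\ell-1}=A_\ell$. Next, since $V_{\ell-1}=\iota(U_{\ell-1})$ implies that each fibre $\{x\}\times\bZ_2$ meets $V_{\ell-1}$ in exactly one point, the intersection $\tilde J_\ell'\cap V_{\ell-1}$ projects onto all of $J_\ell'$; combined with $\pi_{S^1}\circ T_{\ell-1}^i=R^i\circ\pi_{S^1}$, this yields $\pi_{S^1}(U_\ell\setminus U_{\ell-1})=\bigcup_{i=0}^{q_{n_\ell}-1} R^i(J_\ell')$, which is property (2).

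For property (1), the key observation is that $\fp{q_{n_\ell}\alpha}$ is monotonically decreasing in $\ell$: by Assumption \ref{A:even}, $\fp{q_{n_\ell}\alpha}=|q_{n_\ell}\alpha-p_{n_\ell}|$, and this quantity is standardly decreasing in $n$. Hence both intervals in question are nested inside the corresponding intervals at any earlier stage $m\leq\ell$. At stage $m=1$, since $T_0$ is the unskewed product rotation, $A_1$ contains the $i=0$ term $J_1'\times\{0\}=[0,\fp{q_{n_1}\alpha})\times\{0\}$, which places $[0,\fp{q_{n_\ell}\alpha})\times\{0\}$ into $U_1$; meanwhile $[1-\fp{q_{n_\ell}\alpha},1)\times\{1\}\subset U_0$ is not removed at stage $1$, since the argument of Lemma \ref{L:dis} applies verbatim to show that $R^i(J_1')$ is disjoint from $[1-\fp{q_{n_1}\alpha},1)$ for $i=0,\ldots,q_{n_1}-1$. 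At each subsequent stage $m=2,\ldots,\ell$, Lemma \ref{L:dis} gives that $R^i(J_m')$ for $i=0,\ldots,q_{n_m}-1$ is disjoint from $[0,\fp{q_{n_m}\alpha})\supset[0,\fp{q_{n_\ell}\alpha})$ and from $[1-\fp{q_{n_m}\alpha},1)\supset[1-\fp{q_{n_\ell}\alpha},1)$, so neither set is touched; both therefore persist into $U_\ell$.

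I do not expect a major obstacle; the whole argument is careful inductive bookkeeping on top of the invariance and disjointness already in hand. The only mildly awkward point is stage $m=1$ for property (1), where the \emph{statement} of Lemma \ref{L:dis} does not apply (it is phrased for $\ell>1$), but its proof carries over because Assumption \ref{A:toavoid0} still gives $1+2q_{n_1}<q_{n_1+1}$, providing the separation needed on the relevant orbit segment of $[0,\fp{q_{n_1}\alpha})$.
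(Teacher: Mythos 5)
Your proof is correct and follows the same route as the paper: part (1) is proved by induction on $\ell$, with Lemma \ref{L:dis} (whose argument, as you note, also applies at the base case $\ell=1$ via Assumption \ref{A:toavoid0}) showing that neither target interval is ever removed, and part (2) follows directly from the definition of $U_\ell$ together with the $T_{\ell-1}$-invariance of $U_{\ell-1}$ and $V_{\ell-1}$. You have also, correctly, read the upper limit of the union in part (2) as $q_{n_\ell}-1$ rather than $q_{n_\ell-1}$; this matches the definition of $U_\ell$ and the paper's later uses of the statement, so the braces in the theorem as printed are simply misplaced.
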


\begin{rem}
In can be shown (using $(1)$ and $(2)$  and Theorem \ref{T:R}) that $U_\ell$ contains most of $[0, \fp{q_{n_{\ell-1}}\alpha})$. This is morally important in the proof of Claim (3) of Theorem \ref{thm:output jon}.  
\end{rem}

\begin{proof}[{Proof of Theorem \ref{T:skew}.}]
The first part of the first claim is true by induction. The base case is $\ell=1$. The inductive step follows from the  definition of $U_{\ell+1}$ as well as the disjointness of $\{R^i(J'_\ell)\}_{i=0}^{q_{n_\ell}-1}$ from $[0,\fp{q_{n_{\ell}}\alpha})$ and $[1-\fp{q_{n_{\ell}}\alpha}, 1)$ in Lemma \ref{L:dis}. 

The second claim is by the definition of $U_\ell$.
\end{proof}

\section{Birkhoff sums for the rotation $R$}\label{S:Warmup}

In this section we consider the Birkhoff sums of the function $g(x)=\frac 1 x$ over the rotation $R$. This is used in the next section to provide the shearing estimates.

Results on Birkhoff sums over rotations  have previously been used to prove mixing in a slightly different setting (asymmetric singularities), see for example \cite{koc_1,SK}.

\begin{lem}
For each positive integer $N$, there are unique integers $b_n$ such that 
$$N=\sum_{n=1}^k b_n q_n,$$
 and such that $0\leq b_n\leq a_{n+1}$ and $q_n>\sum_{i=0}^{n-1}b_iq_i$ for each $n$.  
\end{lem}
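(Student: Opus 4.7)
This is the Ostrowski representation of $N$ with respect to the denominators $q_n$. The plan is to establish existence by a greedy algorithm and uniqueness by a short pigeonhole argument, both using the recursive formula $q_{n+1} = a_{n+1}q_n + q_{n-1}$ from Theorem \ref{T:CF}.

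For \textbf{existence}, I would proceed by descending recursion. Let $k$ be the largest integer with $q_k \leq N$, and set $b_k = \lfloor N/q_k \rfloor$ with remainder $N_{k-1} = N - b_k q_k \in [0, q_k)$. Since $N < q_{k+1} = a_{k+1}q_k + q_{k-1}$, one has $b_k q_k \leq N < a_{k+1}q_k + q_{k-1}$, which gives $b_k \leq a_{k+1}$ (the fractional slack $q_{k-1}/q_k < 1$ prevents $b_k$ from reaching $a_{k+1}+1$). Then inductively, for $n = k-1, k-2, \ldots, 1$, I would set $b_n = \lfloor N_n/q_n \rfloor$ and $N_{n-1} = N_n - b_n q_n$. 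The key invariant to carry through the induction is $N_{n-1} < q_n$, which is immediate from the definition of $b_n$; combined with $q_{n+1} = a_{n+1}q_n + q_{n-1}$, this same bound gives $b_n \leq a_{n+1}$ at every stage. The constraint $q_n > \sum_{i=1}^{n-1} b_i q_i$ then falls out for free: the left-over tail $\sum_{i=1}^{n-1} b_i q_i$ is exactly $N_{n-1}$, which is less than $q_n$. The recursion terminates at $n = 1$ with $N_0 = 0$.

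For \textbf{uniqueness}, suppose $N = \sum_n b_n q_n = \sum_n c_n q_n$ are two representations satisfying the constraints. Let $k$ be the largest index at which they disagree, and assume without loss of generality $b_k > c_k$. Then
\[
(b_k - c_k)q_k \;=\; \sum_{n=1}^{k-1}(c_n - b_n)q_n.
\]
The left side is at least $q_k$. The right side, in absolute value, is bounded by $\max\!\left(\sum_{n<k} b_n q_n,\ \sum_{n<k} c_n q_n\right)$, and both of these are strictly less than $q_k$ by the hypothesis on the representations. This contradiction gives uniqueness.

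There is no real obstacle here; the only point that requires any attention is checking that the greedy choice $b_n = \lfloor N_n/q_n\rfloor$ never exceeds $a_{n+1}$, which is where the recursion $q_{n+1} = a_{n+1}q_n + q_{n-1}$ is used crucially. Everything else is bookkeeping.
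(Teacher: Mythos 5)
Your existence argument is the same greedy, descending recursion the paper itself uses, with the bound $b_n\le a_{n+1}$ extracted from the recursion $q_{n+1}=a_{n+1}q_n+q_{n-1}$ in exactly the same way. Your uniqueness argument (take the largest index $k$ of disagreement and use the hypothesis $q_k>\sum_{i<k}b_iq_i$, resp.\ $q_k>\sum_{i<k}c_iq_i$, to bound the right-hand side below $q_k$) is correct and in fact supplies a step that the paper's proof asserts but does not write out.
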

Such an expression is called  the Ostrowski expansion of $N$ \cite{Ost}.
\begin{proof} 
 Pick $k$ such that $q_{k+1}>N\geq q_k$, and let $b_k$ be the unique integer such that $0\leq N-b_kq_k< q_k$. By the second recursive formula in Theorem \ref{T:CF}, 
$$N-a_{k+1} q_k < q_{k+1}-a_{k+1}q_k= q_{k-1}.$$
So we get that $b_k\leq a_{k+1}.$ Replacing $N$ with 
$N-b_kq_k$ and iterating this procedure gives the Ostrowski expansion for $N$. 
\end{proof}

\begin{prop}
Let $g:(1,0]\to [0,\infty)$ be a monotone decreasing function. Let 
$$g_N(x)=\max_{i=0, \ldots, N-1} g(R^i(x)).$$
Let $q_{k+1}>N\geq q_k$, and let $1\leq C_N< 2q_{k+1}$. Then, without any assumptions on $\alpha$,
\begin{eqnarray*} 
\sum_{i=0}^{N-1} g(R^ix) &=& g_N(x) + N \int_\frac{C_N}{2q_{k+1}}^1 g
\\&+& O\left( g\left(\frac{C_N}{2q_{k+1}}\right)\sum_{i=2}^{k+1} a_i +\sum_{i=1}^{\lfloor C_N \rfloor} g\left(\frac{i}{2q_{k+1}}\right) \right).
\end{eqnarray*}
\end{prop}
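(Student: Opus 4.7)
The plan is to decompose the orbit $\{R^i x\}_{i=0}^{N-1}$ according to the partition $S^1 = E\sqcup E^c$ where $E=[0,C_N/(2q_{k+1}))$ is a small neighborhood of the singularity of $g$, and then to handle the two pieces by complementary methods: Denjoy--Koksma on $E^c$, and the three-distance-style separation of rotation orbits on $E$. Concretely, write
$$\sum_{i=0}^{N-1}g(R^ix) \;=\; \sum_{i:\,R^ix\in E^c} g(R^ix)\;+\;\sum_{i:\,R^ix\in E}g(R^ix),$$
and estimate each sum separately.

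For the $E^c$ sum, I would apply the Ostrowski expansion $N=\sum_{i=1}^k b_iq_i$ supplied by the preceding lemma to decompose the orbit segment into $\sum b_i$ consecutive blocks, each of which is an orbit segment of length $q_i$ for some $1\le i\le k$. The truncated function $g\chi_{E^c}$ is monotone on $E^c$ and zero on $E$, so its total variation on $[0,1]$ is at most $2g(C_N/(2q_{k+1}))$ (the jump at the cutoff plus the monotone drop on $E^c$). Applying the Denjoy--Koksma inequality (Theorem~\ref{T:R}(6)) to each block and summing gives
$$\sum_{i:\,R^ix\in E^c}g(R^ix)\;=\;N\int_{C_N/(2q_{k+1})}^1 g \;+\; O\Bigl(g\bigl(\tfrac{C_N}{2q_{k+1}}\bigr)\sum_{i=1}^k b_i\Bigr),$$
and using $b_i\le a_{i+1}$ bounds $\sum_{i=1}^k b_i \le \sum_{i=2}^{k+1}a_i$, producing the first error term.

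For the $E$ sum, I would invoke the separation property of Theorem~\ref{T:R}(4): since $N\le q_{k+1}$, the orbit segment $\{R^ix\}_{i=0}^{N-1}$ is at least $d(x,R^{q_k}x)\ge 1/(q_k+q_{k+1})>1/(2q_{k+1})$ separated on $S^1$. List the orbit points lying in $E$ in increasing order as $y_0<y_1<\cdots<y_{M-1}$; the length constraint $|E|=C_N/(2q_{k+1})$ together with the separation forces $M\le \lfloor C_N\rfloor+1$, and the separation gives $y_j\ge y_0+j/(2q_{k+1})\ge j/(2q_{k+1})$ for each $j$. Since every orbit point outside $E$ lies at position $\ge C_N/(2q_{k+1})>y_0$, monotonicity of $g$ identifies $g(y_0)=g_N(x)$. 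The remaining points in $E$ contribute, again by monotonicity,
$$\sum_{j=1}^{M-1}g(y_j)\;\le\;\sum_{j=1}^{\lfloor C_N\rfloor}g\bigl(j/(2q_{k+1})\bigr),$$
which is the second error term.

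Summing the two estimates yields the claimed formula. The only delicate step in execution is the matching of scales: the cutoff $C_N/(2q_{k+1})$ on the integral side has to coincide with the orbit separation scale $1/(2q_{k+1})$ on the discrete side, so that the Denjoy--Koksma variation bound and the counting bound in $E$ are both controlled by the same quantity $g(C_N/(2q_{k+1}))$. Everything else is bookkeeping against the properties of continued fractions collected in Section~\ref{S:background}.
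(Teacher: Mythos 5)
Your proposal is correct and follows essentially the same route as the paper: isolate the closest hit as $g_N(x)$, bound the finitely many other hits in a $C_N/(2q_{k+1})$-neighbourhood of the singularity via the $1/(2q_{k+1})$-separation of orbit points of length $N<q_{k+1}$, and handle the truncated tail with the Ostrowski decomposition of $N$ into $\sum b_n$ blocks of closest-return lengths, applying Denjoy--Koksma to each block with total variation $O(g(C_N/(2q_{k+1})))$. The paper organizes this as a three-part truncation (at scales $1/(2q_{k+1})$ and $C_N/(2q_{k+1})$) while you use a clean two-part split $E\sqcup E^c$ and then separate out $y_0$; the substance is identical, and if anything your bookkeeping of the in-$E$ points via the listing $y_0<\dots<y_{M-1}$ with $y_j\geq j/(2q_{k+1})$ is slightly more transparent than the paper's. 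One small thing worth a sentence in a full write-up: your identification $g(y_0)=g_N(x)$ presupposes that at least one orbit point lands in $E$; if none do, then $g_N(x)\leq g(C_N/(2q_{k+1}))$ and the $g_N(x)$ term in the statement is simply absorbed into the first error term, so the formula still holds.
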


\begin{proof}
Since $N<q_{k+1}$, each orbit segment of length $N$ is $\fp{q_{k}\alpha}\geq \frac1{2q_{k+1}}$ separated. Thus, 
$$-g_N(x) + \sum_{i=0}^{N-1} g(R^ix) $$ 
is within $O\left(g\left(\frac1{2q_{k+1}}\right)\right)$ of 
$$\sum_{i=0}^{N-1} g(R^ix)\chi_{\left[ \frac1{2q_{k+1}}, 1\right)}(R^i x).$$
Indeed, if $g_N(x)>2q_{k+1}$ they are equal and otherwise the claim is immediate by monotonicity.

Now, note that by separation and monotonicity, 
\begin{eqnarray*}
\sum_{i=0}^{N-1} g(R^ix)\chi_{\left[ \frac1{2q_{k+1}}, \frac{C_N}{2q_{k+1}}\right)}(R^i x) 
&\leq& \sum_{i=1}^{\lfloor C_N \rfloor} g\left(\frac{i}{2q_{k+1}}\right).
\end{eqnarray*}
Now, consider the Ostrowski expansion $N=\sum_{n=1}^k b_n q_n$, and consider an orbit segment of length $N$ to be a suite of $b_n$ orbit segments of length $q_n$.  Applying Denjoy-Koksma individually to these $\sum_{n=1}^k b_n$ orbit segments, we get 
$$\sum_{i=0}^{N-1} g(R^ix)\chi_{\left[\frac{C_N}{2q_{k+1}}, 1\right)}(R^i x) =N\int_\frac{C_N}{2q_{k+1}}^1 g + O\left( g\left(\frac{C_N}{2q_{k+1}}\right)\sum_{i=2}^{k+1} a_i\right).$$
Here we have used the estimate $\sum_{n=1}^k b_n\leq \sum_{i=2}^{k+1} a_i$, which is immediate from the definition of the Ostrowski expansion.
\end{proof}

\begin{cor}\label{C:sums}
With the assumptions in this paper on $\alpha$, if $g(x)=1/x$, then for all $N$
\begin{eqnarray*} 
\sum_{i=0}^{N-1} g(R^ix) &=& g_N(x) + N \log(N) +o(N\log(\log(N))^2)
\end{eqnarray*}
and 
\begin{eqnarray*} 
\sum_{i=0}^{N-1} g'(R^ix) &=& g'_N(x) + o\left( \left(N \log\log(N)\right)^2\right).
\end{eqnarray*}
\end{cor}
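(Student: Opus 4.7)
The plan is to apply the preceding proposition twice: directly with $g(x)=1/x$ for the first sum, and with $h(x) = 1/x^2 = |g'(x)|$ for the second (since $h$, unlike $g'$ itself, is monotone decreasing and nonnegative on $(0,1]$), after which we negate. In each case the task reduces to choosing the auxiliary parameter $C_N$ so that all three error contributions in the proposition fit within the prescribed little-$o$ bounds, with the assumptions on $\alpha$ from Section~\ref{S:alpha} supplying the quantitative control.

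For the first sum I would take $C_N = 2q_{k+1}\log(N)/N$, where $k$ is determined by $q_k \le N < q_{k+1}$, so that $C_N/(2q_{k+1}) = \log(N)/N$. Then $N\int_{\log N/N}^{1} dx/x = N\log N - N\log\log N$, and the $N\log\log N$ discrepancy is absorbed into the remainder since $\log\log N = o((\log\log N)^2)$. Assumption~\ref{A:a_nsmallsum} gives $\sum_{i=2}^{k+1} a_i = O(\log q_k) = O(\log N)$, so the first error is $g(C_N/(2q_{k+1}))\cdot O(\log N) = (N/\log N)\cdot O(\log N) = O(N)$. The second error is $\sum_{i=1}^{\lfloor C_N\rfloor} 2q_{k+1}/i = O(q_{k+1}\log C_N) = O(q_{k+1}\log\log N)$, and by Assumption~\ref{A:a_nsmall} we have $q_{k+1} \le (a_{k+1}+1)N = o(N\log\log N)$, so this is $o(N(\log\log N)^2)$.

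For the second sum I would apply the proposition to $h(x)=1/x^2$ with the same $C_N$ and then negate, identifying $g'_N(x)$ with the dominant (most negative) Birkhoff summand $-h_N(x)$. Now $N\int_{\log N/N}^{1} dx/x^2 = N(N/\log N - 1) = O(N^2/\log N)$; the first error is $h(C_N/(2q_{k+1}))\sum a_i = (N/\log N)^2\cdot O(\log N) = O(N^2/\log N)$; and the second error is $(2q_{k+1})^2\sum_{i \ge 1} 1/i^2 = O(q_{k+1}^2)$. Each of these is $o((N\log\log N)^2)$: the first two because $(\log N)(\log\log N)^2 \to \infty$, and the third because $q_{k+1}=o(N\log\log N)$.

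The main obstacle is the trade-off in selecting $C_N$: enlarging it shrinks the first error $g(C_N/(2q_{k+1}))\sum a_i$ but inflates $\sum_i g(i/(2q_{k+1}))$ and pushes the main integral away from $N\log N$, while shrinking it reverses these effects. The choice $C_N \asymp q_{k+1}\log(N)/N$ threads this needle precisely because Assumption~\ref{A:a_nsmallsum} forces $\sum a_i$ to grow only logarithmically in $q_k$ and Assumption~\ref{A:a_nsmall} forces $q_{k+1}/N = o(\log\log N)$; the resulting gap between $\log N$ and $(\log\log N)^2$ is just enough slack to absorb all three error contributions within the $o$-bounds of the corollary.
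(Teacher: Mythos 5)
Your proposal is correct and follows essentially the same route as the paper: apply the preceding Proposition, choose the cutoff $C_N$, and show each of the three error terms is absorbed by the $o$-bounds using Assumptions~\ref{A:a_nsmall} and~\ref{A:a_nsmallsum}, with the $g'$ case handled by applying the Proposition to $1/x^2$ and negating. The only difference is cosmetic: you take $C_N = 2q_{k+1}\log(N)/N$ so that $C_N/(2q_{k+1}) = \log(N)/N$, while the paper simply sets $C_N = \log(N)$; since $q_{k+1}/N$ lies between $1$ and $o(\log\log N)$, the two choices agree up to a slowly varying factor and yield the same asymptotics.
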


\begin{proof} Choose $k$ such that $q_{k}\leq N<q_{k+1}$. Let $0< C_N\leq q_{k+1}$ be a constant, and let us address the different quantities appearing in the proposition above. 

Recall Assumption \ref{A:a_nsmall}, which gives that $q_{k+1}/N=o(\log \log(N))$, and note
\begin{eqnarray*}
\int_\frac{C_N}{2q_{k+1}}^1 g &=& \log(N)+\log(q_{k+1}/N)+\log(2)-\log(C_N)
\\&=&\log(N)-\log(C_N)+\log(o(\log \log(N))).
\end{eqnarray*}

Next, note that the same bound for $q_{k+1}/N$ gives  
\begin{eqnarray*}
\sum_{i=1}^{\lfloor C_N \rfloor} g\left(\frac{i}{2q_{k+1}}\right) &=& 2q_{k+1} \sum_{i=1}^{\lfloor C_N \rfloor} \frac1i
\\&=& 2q_{k+1} \log(C_N) + O(q_{k+1})
\\&=& o(\log \log(N) N \log C_N).
\end{eqnarray*}
 
Finally,  note that Assumption \ref{A:a_nsmallsum} gives that $\sum_{i=2}^{k+1} a_i=O(\log(N))$,
and hence 
\begin{eqnarray*}
g\left(\frac{C_N}{2q_{k+1}}\right)\sum_{i=2}^{k+1} a_i  &=& O\left( \frac{2q_{k+1}}{C_N}  \log(N)\right)
\\&=& o\left( \frac{N \log \log(N)}{C_N}  \log(N)\right)
.\end{eqnarray*}

Now setting $C_N=\log(N)$ and using the previous proposition gives the result. 

The second bound is similar. 
\end{proof}

\begin{lem}\label{L:insignificant1}
Set $g(x)=1/x$. 
For all $N\geq q_{n_k}$,
$$ \sum_{i=0}^{N-1} g(R^ix)\chi_{[\fp{q_{n_{k-1}}\alpha},1)}(R^ix) = o(N\log(N)) $$
and 
$$ \sum_{i=0}^{N-1} g'(R^ix)\chi_{[\fp{q_{n_{k-1}}\alpha},1)}(R^ix) = O(N^2).$$
\end{lem}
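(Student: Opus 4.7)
The strategy is to apply the Denjoy-Koksma inequality (Theorem \ref{T:R}(6)) to the truncated function
$$h(y) = g(y)\chi_{[\fp{q_{n_{k-1}}\alpha},\, 1)}(y),$$
which kills the singularity of $g$ at $0$ and hence has bounded variation. Since $h$ is monotone decreasing on $[\fp{q_{n_{k-1}}\alpha}, 1)$ with a jump of size $1/\fp{q_{n_{k-1}}\alpha}$ at the left endpoint, one has $\Var(h) = O(1/\fp{q_{n_{k-1}}\alpha})$ and $\int h = O(\log(1/\fp{q_{n_{k-1}}\alpha}))$. By the lower bound in Theorem \ref{T:R}(2), $1/\fp{q_{n_{k-1}}\alpha} = O(q_{n_{k-1}+1})$, so these become
$$\Var(h) = O(q_{n_{k-1}+1}), \qquad \int h = O(\log q_{n_{k-1}+1}).$$

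Next I would take the Ostrowski expansion $N = \sum_{n=1}^m b_n q_n$ (where $q_m \leq N < q_{m+1}$) and decompose an orbit segment of length $N$ into $\sum_n b_n$ consecutive orbit segments of lengths $q_n$. Applying Denjoy-Koksma piecewise, exactly as in the proof of the preceding proposition, gives
$$\sum_{i=0}^{N-1} h(R^ix) \;=\; N\int h \;+\; O\!\left(\Var(h)\sum_n b_n\right) \;=\; O(N\log q_{n_{k-1}+1}) \;+\; O(q_{n_{k-1}+1}\log N),$$
where we used Assumption \ref{A:a_nsmallsum} in the form $\sum_n b_n \leq \sum_{i=2}^{m+1} a_i = O(\log N)$.

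Now I would invoke Assumption \ref{A:rapid up}. Since $N \geq q_{n_k}$, that assumption gives $\log q_{n_{k-1}+1} = o(\log q_{n_k}) = o(\log N)$, which in turn forces $q_{n_{k-1}+1}/N \to 0$ polynomially fast (for any fixed $\epsilon>0$, eventually $q_{n_{k-1}+1} \leq N^\epsilon$). The first error term is then $N \cdot o(\log N) = o(N\log N)$, and the second satisfies $q_{n_{k-1}+1}\log N = o(N)\cdot \log N = o(N\log N)$. This proves the first claim.

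The second claim is obtained by running the same argument verbatim on $h'(y) = g'(y)\chi_{[\fp{q_{n_{k-1}}\alpha},\, 1)}(y)$, which is again monotone on its support. Here $\Var(h') = O(q_{n_{k-1}+1}^2)$ and $|\int h'| = O(q_{n_{k-1}+1})$, so Denjoy-Koksma yields the sum $O(N q_{n_{k-1}+1} + q_{n_{k-1}+1}^2 \log N)$, and both terms are comfortably $O(N^2)$ because $q_{n_{k-1}+1}$ is polynomially smaller than $N$. I do not foresee any serious obstacle: the whole proof is driven by the observation that truncation converts the singular integrand into a bounded-variation function to which Denjoy-Koksma can be applied directly, and the only subtlety is verifying that the prefactor $q_{n_{k-1}+1}$ is dominated by $N \geq q_{n_k}$ with enough room to absorb the $\log N$ factors—which is precisely the content of Assumption \ref{A:rapid up}.
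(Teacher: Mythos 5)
Your proposal is correct and follows essentially the same route as the paper: truncate $g$ (resp. $g'$) at $\fp{q_{n_{k-1}}\alpha}$ to obtain a bounded-variation function, decompose the orbit via the Ostrowski expansion with $\sum_n b_n = O(\log N)$ from Assumption \ref{A:a_nsmallsum}, apply Denjoy-Koksma piecewise to get the bound $O(N\log q_{n_{k-1}+1} + q_{n_{k-1}+1}\log N)$ (resp. $O(Nq_{n_{k-1}+1} + q_{n_{k-1}+1}^2\log N)$), and conclude with Assumption \ref{A:rapid up}. This matches the paper's argument, including the final observation that $\log q_{n_{k-1}+1} = o(\log N)$ makes $q_{n_{k-1}+1}$ subpolynomial in $N$.
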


\begin{proof}
 Let $N=\sum_{n=1}^k b_n q_n$ be the Ostrowski expansion of $N$. As in the previous proof, using Assumption \ref{A:a_nsmallsum} we get $\sum_{n=1}^k b_n =O(\log(N))$. 
Using  Denjoy-Koksma, we get 
\begin{eqnarray*}
\sum_{i=0}^{N-1} g(R^ix)\chi_{[\fp{q_{n_{k-1}}\alpha},1)}(R^ix) 
&=&
O\left(-N \log\fp{q_{n_{k-1}}\alpha} + \frac{\log(N)}{\fp{q_{n_{k-1}}\alpha}}\right)
\\&=&
O\left(N \log(q_{n_{k-1}+1}) + \log(N)q_{n_{k-1}+1}\right).
\end{eqnarray*}

Assumption \ref{A:rapid up}, which gives $\log(q_{n_{k-1}+1})=o(\log(N))$, gives the result. 
Similarly, to prove the second estimate it suffices to note
\begin{eqnarray*}
\sum_{i=0}^{N-1} g'(R^ix)\chi_{[\fp{q_{n_{k-1}}\alpha},1)}(R^ix) 
&=&
O\left(\frac{N}{\fp{q_{n_{k-1}}\alpha}} + \frac{\log(N)}{\fp{q_{n_{k-1}}\alpha}^2}\right)
\\&=&
O\left(N q_{n_{k-1}+1} + \log(N)q_{n_{k-1}+1}^2\right) \\&=& O(N^2).
\end{eqnarray*}
\end{proof}

\begin{lem}\label{L:insignificant2}
Set $g(x)=1/x$. Let $S$ be an orbit of length $q_{n_\ell}$ of an interval of length $\fp{q_{n_{\ell}}\alpha}$. Assume $S$ is disjoint from $[0, \fp{q_{n_{\ell}}\alpha})$. Let $x$ be any point disjoint from $\cup_{i=0}^{\sqrt{a_{n_{\ell}+1}} q_{n_{\ell}}}R^{-i}(S)$. Then for all $N>q_{n_\ell}$
$$ \sum_{i=0}^{N-1} g(R^ix)\chi_{S}(R^ix) =
o(N\log(N))$$
and 
$$ \sum_{i=0}^{N-1} g'(R^ix)\chi_{S}(R^ix) =
o(N^2\log(N)^\frac13).$$
The implied constant does not depend on $\ell$. 
\end{lem}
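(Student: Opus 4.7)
The plan is to exploit the block structure of visits of the orbit of $x$ to $S$. Writing $S = \bigcup_{j=0}^{q_{n_\ell}-1} R^j(I)$ for the generating interval $I$ of length $\fp{q_{n_\ell}\alpha}$, the best approximates property (Theorem~\ref{T:CF}(4)) shows $|k\alpha|_\bZ \geq \fp{q_{n_\ell}\alpha}$ for $0 < k < q_{n_\ell+1}$, with equality only at $k = q_{n_\ell}$. Hence $R^{q_{n_\ell}}(I)$ abuts $I$ with empty interior intersection and is disjoint from every $R^j(I)$ with $1\le j<q_{n_\ell}$; so $R^{q_{n_\ell}}(I)$ is disjoint from $S$, and the minimum return time to $I$ is $q_{n_\ell+1}$. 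Consequently the orbit of $x$ can enter $S$ only through $I$, visits to $S$ form blocks of exactly $q_{n_\ell}$ consecutive times, and consecutive blocks are separated by at least $q_{n_\ell+1}$. The waiting-time hypothesis forces the first visit to occur after time $\sqrt{a_{n_\ell+1}}q_{n_\ell}$, so whenever the sum is nonzero we have $N > \sqrt{a_{n_\ell+1}}q_{n_\ell}$; moreover the number of blocks meeting $[0,N-1]$ is at most $1+N/q_{n_\ell+1}$.

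To bound a single block I would apply Denjoy-Koksma (Theorem~\ref{T:R}(6)) to the truncated function $\tilde g = g \cdot \chi_{[\fp{q_{n_\ell}\alpha},1]}$. Since a block orbit segment of length $q_{n_\ell}$ lies in $S\subset[\fp{q_{n_\ell}\alpha},1]$, its block sum of $g$ equals the corresponding sum of $\tilde g$. The function $\tilde g$ has total variation $\le 2/\fp{q_{n_\ell}\alpha} = O(q_{n_\ell+1})$ and integral $-\log\fp{q_{n_\ell}\alpha}=O(\log q_{n_\ell+1})=O(\log q_{n_\ell})$, where the last equality uses Assumption~\ref{A:a_nsmall} to absorb $\log a_{n_\ell+1}$. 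Denjoy-Koksma then gives a per-block bound $q_{n_\ell}\cdot O(\log q_{n_\ell}) + O(q_{n_\ell+1})$, and summing over at most $1+N/q_{n_\ell+1}$ blocks (using $q_{n_\ell}/q_{n_\ell+1}\le 1/a_{n_\ell+1}$) yields
$$
\sum_{i=0}^{N-1} g(R^i x)\chi_S(R^ix) \;=\; O\!\left(\frac{N\log q_{n_\ell}}{a_{n_\ell+1}}\right) + O(N) + O(q_{n_\ell}\log q_{n_\ell}) + O(q_{n_\ell+1}).
$$
Each of these is $o(N\log N)$ uniformly in $\ell$: the $O(q_{n_\ell+1})$ and $O(q_{n_\ell}\log q_{n_\ell})$ boundary terms because the waiting-time bound gives $q_{n_\ell+1}\le 2a_{n_\ell+1}q_{n_\ell}\le 2\sqrt{a_{n_\ell+1}}\,N$ combined with $a_{n_\ell+1}=o(\log\log N)$ from Assumption~\ref{A:a_nsmall}; the $O(N)$ term trivially; and the leading term $N\log q_{n_\ell}/a_{n_\ell+1}$ by fixing $\epsilon>0$, choosing $\ell_0$ with $a_{n_\ell+1}>2/\epsilon$ for $\ell\ge\ell_0$ via Assumption~\ref{A:growing} (which handles large $\ell$ with $\log q_{n_\ell}\le\log N$), and then choosing $N$ large enough to make $\log q_{n_\ell}/\log N$ small for the finitely many $\ell<\ell_0$.

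The second estimate for $g'$ proceeds along the same lines. Applying Denjoy-Koksma to $g'\chi_{[\fp{q_{n_\ell}\alpha},1]}$ (which has total variation $O(q_{n_\ell+1}^2)$ and integral of absolute value $O(q_{n_\ell+1})$) yields block sums of size $O(q_{n_\ell+1}^2)$, so the total is $O(q_{n_\ell+1}^2 + Nq_{n_\ell+1})=O(a_{n_\ell+1}N^2)$ after a second application of the waiting-time bound $q_{n_\ell+1}\le 2\sqrt{a_{n_\ell+1}}N$. Since Assumption~\ref{A:a_nsmall} gives $a_{n_\ell+1}=o(\log\log N)=o((\log N)^{1/3})$, this is $o(N^2(\log N)^{1/3})$ as required. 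The main obstacle throughout is the uniform-in-$\ell$ control of the leading term $N\log q_{n_\ell}/a_{n_\ell+1}$, which requires the delicate interplay of Assumptions~\ref{A:growing} and~\ref{A:a_nsmall} with the waiting-time hypothesis; everything else reduces to a standard application of Denjoy-Koksma together with the best-approximates property.
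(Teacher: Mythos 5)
Your proposal is correct and follows essentially the same strategy as the paper: decompose visits to $S$ into blocks of length $q_{n_\ell}$ entering through the generating interval, bound each block with Denjoy--Koksma applied to $g\chi_{[\fp{q_{n_\ell}\alpha},1)}$, and use the waiting-time hypothesis together with the separation property to count blocks. The paper's bookkeeping is slightly cleaner -- it observes that the waiting time of $\sqrt{a_{n_\ell+1}}q_{n_\ell}$ occurs before every pass, giving $N\geq m\sqrt{a_{n_\ell+1}}q_{n_\ell}$ and hence a single ratio bound $O(1/\sqrt{a_{n_\ell+1}})$ -- whereas you bound $m$ via $1+N/q_{n_\ell+1}$ and track several error terms, but the underlying argument and the reliance on Assumptions~\ref{A:growing} and~\ref{A:a_nsmall} are the same.
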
 


\begin{proof}
Denjoy-Koksma gives that the sum of 
$$g(x)\chi_{[\fp{q_{n_{\ell}}\alpha},1)}(x)$$
over an orbit of length  $q_{n_{\ell}}$ is at most
$$O(q_{n_{\ell}}\log q_{n_{\ell}+1} +  q_{n_{\ell}+1})=O(q_{n_{\ell}}\log q_{n_{\ell}+1} )=O(q_{n_{\ell}}\log q_{n_{\ell}} ).$$
Note that, by the separation property, each orbit of length $q_{n_\ell+1}> a_{n_\ell+1}q_{n_\ell}$ can make at most one pass through $S$. A point $x$ as in the lemma stays outside of $S$ for time at least $\sqrt{a_{n_{\ell}+1}} q_{n_{\ell}}$, then makes a pass through $S$, then stays outside of $S$ for time at least (actually much more than) $\sqrt{a_{n_{\ell}+1}} q_{n_{\ell}}$, then makes a pass through $S$, etc. Therefore, if the orbit makes $m-1$ full passes through $S$, plus possibly a final partial pass through, then the first sum in the lemma statement is at most 
$$mO(q_{n_{\ell}}\log q_{n_{\ell}} ),$$
while $N$ is at least 
$$ m \sqrt{a_{n_{\ell}+1}} q_{n_{\ell}}.$$
So $N \log N$ is at least 
$$m \sqrt{a_{n_{\ell}+1}} q_{n_{\ell}} \log (q_{n_{\ell}}),$$
whence the result follows by Assumption \ref{A:growing}. 

Similarly, Denjoy-Koksma gives that the sum of 
$$g'(x)\chi_{[\fp{q_{n_{\ell}}\alpha},1)}(x)$$
over an orbit of length  $q_{n_{\ell}}$ is at most
$$O(q_{n_{\ell}} q_{n_{\ell}+1} +  q_{n_{\ell}+1}^2)=O(a_{n_{\ell}+1}^2 q_{n_{\ell}}^2).$$ As before, if the orbit makes $m-1$ full passes through $S$, plus possibly a final partial pass though, then the sum is at most 
$$mO(a_{n_{\ell}+1}^2 q_{n_{\ell}}^2),$$
while $N$ is at least 
$$ m \sqrt{a_{n_{\ell}+1}} q_{n_{\ell}}.$$
So $N^2 (\log N)^\frac13$ is at least 
$$m^2 {a_{n_{\ell}+1}} q_{n_{\ell}}^2 \log (q_{n_{\ell}})^\frac13,$$
whence the result follows by Assumption \ref{A:a_nsmall}.
\end{proof}

\section{Birkhoff sums for the skew product $T$}\label{sec:work} 

We now define a function $f:S^1\times \bZ_2\to \bR$, which will serve as the roof function for a suspension flow over $T$. Recall that $d$ denotes distance on the circle $S^1$. Also note that $$T(x, j) = (R(x), j+\chi_J(R(x)))$$ has discontinuities at  $x=R^{-1}(0)$ and $x=R^{-1}(|J|)$. We will define $f$ to have logarithmic singularities over these discontinuities, as well as an additional logarithmic singularity over $0$. 

\begin{eqnarray*}
f(x,j)
=
1&+&|\log(d(x,R^{-1}(0)))|
\\&+&|\log(d(x,R^{-1}(|J|)))|
\\&+&
\chi_{\{1\}}(j) \cdot |\log(d(x,0))|
\end{eqnarray*}

Let $\lambda$ denote Lebesgue probably measure on $S^1\times \bZ_2$.

\begin{thm} \label{thm:output jon}
For all large enough $M$ there exists $G(M)\subset S^1\times \bZ_2$ such that the following hold.
\begin{enumerate}
\item $G(M)$ is the disjoint union of intervals of length at least $\frac{2}{M\sqrt{\log(M)}}$.
\item $\lambda (G(M))\to 1$. 
\item If $N \in [\frac{M}2,2M]$ and $p\in G(M)$ then
$$\sum_{i=0}^{N-1}f'(T^ip)=\pm N\log(N)+o(N\log N).$$
\item If $N \in [\frac{M}2,2M]$ and $p\in G(M)$ then
$$\left|\sum_{i=0}^{N-1}f''(T^ip)\right|=o((\log(N))^{\frac 1 3}N^2).$$
\item If  $k<2M$ and $p\in G(M)$ then
$$\sum_{i=k}^{k+5\sqrt{\log(M)}}\left|f'(T^ip)\right|=o\left( {M\sqrt{\log(M)}}\right).$$
\item $T^i$ is continuous on each interval of $G(M)$ for all $0\leq i\leq 2M+5\sqrt{\log(M)}$.
\end{enumerate}
\end{thm}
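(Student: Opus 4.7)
I would define $G(M)$ as follows. Let $\delta = 1/(M(\log M)^{1/12})$, let $\mathcal{S} \subset S^1 \times \bZ_2$ be the finite set of singularities of $f$ (which contains the discontinuities of $T$), and remove from $S^1 \times \bZ_2$ the open $\delta$-neighborhood of $T^{-i}(\mathcal{S})$ for every $0 \leq i \leq 2M + 5\sqrt{\log M}$; then discard any surviving connected components of length less than $2\delta$. Call the result $G(M)$. Property (1) holds with room to spare, since $2\delta > 2/(M\sqrt{\log M})$. For property (2), the union of excised balls has measure $O(M\delta) = O((\log M)^{-1/12}) \to 0$, as do the discarded short components (at most $O(M)$ of them, each of length $< 2\delta$). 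Property (6) holds since the discontinuities of $T^i$ over the relevant range lie in the removed set.

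Property (5) follows from separation. Within any window of $5\sqrt{\log M}$ consecutive iterates, Theorem \ref{T:R} shows the projected orbit is $\gtrsim 1/\sqrt{\log M}$-separated. For $p \in G(M)$, if we sort the window's orbit points by distance to a singularity $s \in \mathcal{S}$, the $k$-th-closest lies at distance $\geq \max(\delta, (k-1)/(2\sqrt{\log M}))$, so
\[
\sum_{i=k}^{k+5\sqrt{\log M}} |f'(T^i p)| \ll \tfrac{1}{\delta} + \sqrt{\log M}\,\log(\sqrt{\log M}) = O(M(\log M)^{1/12}) = o(M\sqrt{\log M}).
\]

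Properties (3) and (4) are the substantive estimates. Decompose $f'(x,j) = f_1'(x) + f_2'(x) + \chi_{\{1\}}(j)\,f_3'(x)$, where $f_\ell'$ is the signed derivative of $-\log d(\cdot, s_\ell)$ for $s_1 = R^{-1}(0)$, $s_2 = R^{-1}(|J|)$, $s_3 = 0$; each $f_\ell'$ is odd around its singularity. For the symmetric pieces $\ell = 1, 2$, I would adapt the proof of Corollary \ref{C:sums}, splitting $\sum_i f_\ell'(R^i x)$ into contributions from a $\delta$-disk around $s_\ell$ (empty since $p \in G(M)$), an intermediate annulus handled by separation and oddness shell-by-shell, and a far region treated by Denjoy-Koksma applied to an Ostrowski decomposition of $N$. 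The tight equidistribution guaranteed by Assumptions \ref{A:a_nsmall} and \ref{A:a_nsmallsum} ensures the symmetric cancellation beats the logarithmic denominators, yielding $\sum f_\ell'(R^i x) = o(N \log N)$. The asymmetric piece $\sum_i \chi_{\{1\}}(T^i p)\, f_3'(R^i x)$ provides the $\pm N \log N$ main term. Choose $\ell$ with $q_{n_\ell}$ comparable to $M$. By Theorem \ref{T:skew}, $U_\ell \supset ([0, \fp{q_{n_\ell}\alpha}) \times \{0\}) \cup ([1 - \fp{q_{n_\ell}\alpha}, 1) \times \{1\})$ while $V_\ell = \iota(U_\ell)$ has the mirror inclusions. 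Consequently, in $U_\ell$ the only orbit points whose first coordinate is near $x_0 = 0$ and whose $\bZ_2$-coordinate is $1$ (so $\chi_{\{1\}} f_3' > 0$) are those approaching from the left; the reverse holds in $V_\ell$. Since $T$ agrees with $T_\ell$ off a set of measure $\sum_{k > \ell} 2\fp{q_{n_k}\alpha} \ll \delta$ (by Assumption \ref{A:toavoid0}), and Lemmas \ref{L:insignificant1} and \ref{L:insignificant2} let me discard contributions from orbit points outside shrinking neighborhoods of $x_0$, Corollary \ref{C:sums} applied to the surviving one-sided sum gives $\pm N\log N + o(N\log N)$, with the sign determined by whether $p$ lies essentially in $U_\ell$ or $V_\ell$. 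Property (4) follows analogously from the $g'$-estimate in Corollary \ref{C:sums} combined with Lemmas \ref{L:insignificant1}, \ref{L:insignificant2}.

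The main obstacle will be executing the cancellation argument for $f_1', f_2'$: a naive Denjoy-Koksma bound via the $O(1/\delta)$ variation of the truncated function produces only $O(N \log N \cdot \sqrt{\log M})$, which does not beat $N \log N$. One must exploit oddness of $f_\ell'$ at every dyadic scale between $\delta$ and $1/2$, arguing that the orbit imbalance on the two sides of $s_\ell$ at each scale is controlled by the fine discrepancy properties of $R$ provided by the continued-fraction assumptions on $\alpha$. A secondary difficulty is the quantitative match between the $T$-orbit and the $T_\ell$-orbit of $p$ over the full window of length $2M + 5\sqrt{\log M}$; this is resolved by choosing $\ell$ so that $q_{n_\ell} \asymp M$ and invoking Assumptions \ref{A:toavoid0} and \ref{A:rapid up} to force the set on which $T$ and $T_\ell$ differ to have measure vastly smaller than both $\delta$ and the complement of $G(M)$.
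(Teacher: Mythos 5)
Your construction of $G(M)$ is missing two of the three exclusions that the actual argument needs, and this is a genuine gap rather than a technicality. The paper's bad set consists of (a) the $R$-preimages, over the whole window $0\leq i\leq 2M+5\sqrt{\log M}$, of the $\delta$-neighborhoods of the singularities (which you have), but also (b) the $R$-preimages, over the same window, of $\cup_{j\geq \ell+1}J_j$, and (c) the block $\cup_{i=-q_{n_\ell}}^{\sqrt{a_{n_\ell+1}}q_{n_\ell}}R^{-i}(J'_\ell)$. Exclusion (b) is what guarantees that for $p\in G(M)$ the $T$-orbit of length $\sim 2M$ coincides with the $T_{\ell+1}$-orbit and hence stays in exactly one of $U_{\ell+1}$ or $V_{\ell+1}$; this is what makes the sign in claim (3) well defined and constant on each interval of $G(M)$. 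Your proposed substitute --- that the set where $T\neq T_\ell$ has measure much smaller than $\delta$ --- does not do this: smallness in measure of $\cup_{j>\ell}J_j$ only shows that the union of its $O(M)$ preimages has measure $o(1)$, not that points of your $G(M)$ (which you did not shrink accordingly) avoid it. A positive-measure portion of your $G(M)$ will have orbits that enter this set, switch between $U_{\ell+1}$ and $V_{\ell+1}$ mid-window, and then the one-sided visits to the singularity at $0$ that produce the $\pm N\log N$ main term are destroyed. Exclusion (c) is likewise not optional: it is precisely the hypothesis of Lemma \ref{L:insignificant2} (the point must avoid $\cup_{i\leq\sqrt{a_{n_\ell+1}}q_{n_\ell}}R^{-i}(S)$ for the transition set $S$, the orbit of $J'_\ell$), which you invoke to discard the part of $[0,\fp{q_{n_{\ell-1}}\alpha})\times\{1\}$ not in $V_\ell$; without building it into $G(M)$, that lemma simply does not apply to all your points. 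Once (b) and (c) are added, one must also redo your measure and interval-length bookkeeping for claims (1) and (2), which is exactly what the paper does.

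A second, lesser point: you flag the symmetric pieces $f_1',f_2'$ as the main obstacle and propose a dyadic, shell-by-shell discrepancy argument. This is unnecessary: apply Corollary \ref{C:sums} separately to the left and right reciprocal-distance functions at each discontinuity; both Birkhoff sums equal (closest-approach term) $+\,N\log N+o(N(\log\log N)^2)$, the $N\log N$ terms cancel identically because they enter with opposite signs, and the closest-approach terms are at most $1/\delta=M(\log M)^{1/12}=o(N\log N)$ precisely because $p$ avoids the preimages of $Q_M$. So the cancellation is automatic from the uniform two-sided asymptotics already proved, and the ``naive Denjoy--Koksma loses a $\sqrt{\log M}$'' issue you worry about never arises. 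Your treatment of claims (1), (2), (5), (6) is essentially the paper's and is fine, as is the overall decomposition of $f'$ and the intended use of Theorem \ref{T:skew} and Lemmas \ref{L:insignificant1}--\ref{L:insignificant2} for the asymmetric piece; the missing ingredient is the orbitwise exclusions in the definition of $G(M)$ described above.
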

The most important conditions are the first three. In the third, it is implicit that the sign is constant on each interval in $G(M)$.

\begin{proof}
Pick $\ell$ such that $q_{n_\ell}< \frac{M}2<q_{n_{\ell+1}}$. Let $Q_M\subset S^1$ be the points within distance $\frac{1}{M (\log M)^{\frac1{12}}}$ of the singularities of $f$. 
Define the ``bad set" in $S^1$ to be 
\begin{eqnarray*}
B_{S^1}(M)&=&\bigcup_{i=0}^{2M+5\sqrt{\log(M)}} R^{-i}(Q_M)
\\&\cup& \bigcup_{i=0}^{2M+5\sqrt{\log(M)}} R^{-i}(\cup_{j=\ell+1}^\infty J_j)
\\&\cup& \bigcup_{i=-q_{n_\ell}}^{\sqrt{a_{n_\ell+1}}q_{n_\ell}}R^{-i} (J_\ell').
\end{eqnarray*}
Set $B(M)=B_{S^1}(M)\times \bZ_2$. The complement of the bad set ${B}(M)^c$ is a union of disjoint intervals. We define the good set $G(M)$ to be the union of all those intervals in ${B}(M)^c$ of length at least $\frac{2}{M\sqrt{\log(M)}}$.

\begin{rem}
The three terms in the definition of $B_{S^1}(M)$ reflect the three things that can go wrong in our arguments. The first  is that an orbit can come too close to a singularity. 
 The second is that an orbit can  fail to stay in one of $U_{\ell+1}$ or $V_{\ell+1}$. The third is that an orbit can switch too soon between $U_\ell$ and $V_\ell$. Switching between $U_\ell$ and $V_\ell$ is undesirable but unavoidable, and this last term mitigates this problem.
\end{rem}

\bold{Claims (1) and (2):} Recall that $ \cup_{j=\ell+1}^\infty J_j $  is an interval of size 
$$\sum_{k=\ell+1}^\infty 2\fp{q_{n_k} \alpha}\leq \sum_{k=\ell+1}^\infty \frac{2}{q_{n_k+1}}\leq \frac{4}{q_{n_{\ell+1}+1}}$$
by the exponential growth of the $q_{n_k}$, for example by Assumption \ref{A:toavoid0}. Since $\frac{M}2<q_{n_{\ell+1}}$, Assumption \ref{A:growing} gives that $\frac{4}{q_{n_{\ell+1}+1}} = o(1/M)$.  
Hence the measure of the second union in $B_{S^1}(M)$ goes to zero. 

The third union has size at most three times 
$$ \frac{\sqrt{a_{n_{\ell}+1}} q_{n_{\ell}}}{q_{n_{\ell}+1}}= \frac{\sqrt{a_{n_{\ell}+1}} q_{n_{\ell}}}{a_{n_{\ell}+1}q_{n_{\ell}}+q_{n_{\ell}-1}}\to 0$$
by Assumption \ref{A:growing}. 

The first union obviously has size $o(M)$, so in total we see that $B(M)^c$ has measure $1-o(1/M)$. Note also that $B(M)$ is the disjoint union of $O(M)$ intervals. It remains only to show that the subset covered by intervals of length at least $\frac{2}{M\sqrt{\log(M)}}$ has measure going to 1. This is true since the complement has measure at most  $O\left(\frac{M}{M\sqrt{\log(M)}}\right)=o(1)$ (the total number of such intervals, times max length).   

\bold{Claim (3):} Let $g(x)=1/\fp{x}$, so for example $g(-0.1)=\frac {10} 9$. The difference between $f'(x,j)$ and
\begin{eqnarray*}
&&g(-x+R^{-1}(0))) - g(x-R^{-1}(0))
\\&&+g(-x+R^{-1}(|J|)) - g(x-R^{-1}(|J|)) 
\\&&+
\chi_{\{1\}}(j) g(1-x)- \chi_{\{1\}}(j) g(x)
\end{eqnarray*} 
is a  bounded function, whose first and second derivatives are bounded. Since the derivatives of the difference is bounded, the Birkhoff sums are $O(N)$, and so it suffices to prove claims (3), (4) and (5) for this function.  
If $x\in G(M)$, Corollary \ref{C:sums} gives that the Birkhoff sums of the function
\begin{multline*}
g(-x+R^{-1}(0)) - g(x-R^{-1}(0))+\\
g(-x+R^{-1}(|J|)) - g(x-R^{-1}(|J|))
\end{multline*} 
and its derivative are sufficiently small that they may be ignored.    Indeed, the consecutive terms have that the $N\log(N)$ terms appear with opposite signs and by our assumption on the set $G(M)$ the other terms are $o(N\log(N))$.

So it remains to consider Birkhoff sums of 
$$\chi_{\{1\}}(j) g(-x)-\chi_{\{1\}}(j) g(x).$$
If $p=(x,j)\in G(M)$, then the orbit of $p$ of length $2M$ stays in either $U_{\ell+1}$ or $V_{\ell+1}$.
By Lemma \ref{L:insignificant1} it suffices to study Birkhoff sums of 

$$(\chi_{\{1\}}(j) g(1-x)- \chi_{\{1\}}(j)g(x))\chi_{(0, \fp{q_{n_{\ell-1}}\alpha}]\cup[1-\fp{q_{n_{\ell-1}}\alpha},1)}(x).$$

Suppose $p\in V_\ell$. Recall that Theorem \ref{T:skew} asserts that $V_\ell$ contains all of $[0, \fp{q_{n_{\ell}}\alpha})\times \{1\}$, 
 and all of $[0, \fp{q_{n_{\ell-1}}\alpha})\times \{1\}$ except a set $S$ that is the orbit of length $q_{n_\ell}$ 
  of an interval of size $\fp{q_{n_{\ell}}\alpha}$. Hence because $B_{S^1}(M)\supset \cup_{i=-q_{n_\ell}}^{\sqrt{a_{n_\ell+1}}q_{n_\ell}}R^{-i}(J'_\ell)$, by Lemma \ref{L:insignificant2} and Corollary \ref{C:sums}
$$\sum_{i=0}^{N-1}\chi_{[0, \fp{q_{n_{\ell-1}}\alpha})\times \{1\}}g(T^ip)=- N\log(N)+o(N\log N).$$
Similarly one obtains, 
$$\sum_{i=0}^{N-1}\chi_{[1-\fp{q_{n_{\ell-1}}\alpha},1)\times \{1\}}g(T^ip)= o(N\log N)$$
 as follows. $V_\ell$ is disjoint from $[1-\fp{q_{n_{\ell-1}}\alpha},1)\times \{1\}$ except an orbit of length $q_{n_\ell}$ 
 of an interval of size $\fp{q_{n_{\ell}}\alpha}$. Furthermore, by Lemma \ref{L:dis}, this orbit of length $q_{n_\ell}$ 
 of an interval of size $\fp{q_{n_{\ell}}\alpha}$ is disjoint from $[1-\fp{q_{n_{\ell}}\alpha},1)\times \{1\}$. So by a corresponding version of Lemma \ref{L:insignificant2} for the function $\hat{g}(x)=\frac 1 {1-x}$ the estimate follows.  
This gives Claim (3) for $p\in V_\ell$. The case $p\in U_\ell$ is similar. 

\bold{Claim (4):} The proof of claim (4) is very similar to that of claim (3). 

\bold{Claim (5):} It suffices to bound the Birkhoff sums of 
\begin{eqnarray*}
&&g(-x+R^{-1}(0))) - g(x-R^{-1}(0))
\\&+&g(-x+R^{-1}(|J|)) - g(x-R^{-1}(|J|)) 
\\&+& g(1-x)- g(x)
\end{eqnarray*} 
which is strictly larger than $|f|$. Thus Corollary \ref{C:sums} gives the bound, since the closest hit term contributes at most $M (\log M)^{\frac1{12}}$.

\bold{Claim (6):} This follows because $B(M)$ contains all points that orbit into a discontinuity in $2M+5\sqrt{\log(M)}$ iterates of $T$.
\end{proof}

\section{Unique ergodicity}\label{sec:ue}

\begin{prop}\label{prop:suff holds}
$T$ is uniquely ergodic. 
\end{prop}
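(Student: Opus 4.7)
The plan is to reduce unique ergodicity to Lebesgue-ergodicity of $T$ and then exploit the approximation $T_\ell\to T$ in the spirit of the Katok--Sataev--Veech dichotomy; the divergent-sum Assumption \ref{A:infsum} will enter as the condition that prevents the $\mathbb{Z}_2$-cocycle $\chi_J$ over $R$ from being a measurable coboundary.

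First, because $R$ is uniquely ergodic on $S^1$, every $T$-invariant Borel probability measure $\mu$ pushes forward under $\pi_{S^1}$ to Lebesgue on $S^1$. Disintegration yields a measurable $a:S^1\to[0,1]$ with $\mu=\int_{S^1}\bigl(a(x)\delta_{(x,0)}+(1-a(x))\delta_{(x,1)}\bigr)\,dx$, and $T$-invariance translates into the $\bZ_2$-cocycle identity
\[a(R(x))=a(x)\text{ if }R(x)\notin J,\qquad a(R(x))=1-a(x)\text{ if }R(x)\in J.\]
It suffices to show $a\equiv\tfrac12$. Setting $b:=2a-1$ so that $b\circ R=(-1)^{\chi_J\circ R}\cdot b$, the $R$-invariance of $|b|$ combined with ergodicity of $R$ gives $|b|\equiv C$ for some $C\in[0,1]$, and we are done unless $C>0$. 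Supposing $C>0$, the set $E:=(\{b=C\}\times\{0\})\cup(\{b=-C\}\times\{1\})$ is $T$-invariant, of Lebesgue measure $\tfrac12$, with $\iota(E)=E^c$; I would derive a contradiction.

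Next, compare $E$ with the $T_\ell$-invariant sets $U_\ell,V_\ell$. Since $T$ and $T_\ell$ agree off $\pi_{S^1}^{-1}\bigl(\bigcup_{j>\ell}J_j\bigr)$, a set of Lebesgue measure $O(1/q_{n_{\ell+1}+1})$ by the rapid growth coming from Assumption \ref{A:toavoid0}, the set $E$ is approximately $T_\ell^n$-invariant for $n$ up to a polynomial in $q_{n_{\ell+1}}$. The restrictions $T_\ell|_{U_\ell}$ and $T_\ell|_{V_\ell}$ are uniquely ergodic, each being metrically isomorphic (via a Kakutani-tower construction) to an irrational rotation by $\alpha$. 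Hence any sufficiently approximately $T_\ell$-invariant measurable set has normalized measure on $U_\ell$ in $\{o(1),1-o(1)\}$. Combined with $\iota(E)=E^c$, $\iota(U_\ell)=V_\ell$, and the fact that $\lambda(U_\ell\triangle U_{\ell+1})\to 0$ (Assumption \ref{A:growing}, which prevents $E$ from oscillating between being close to $U_\ell$ and close to $V_\ell$), we may after possibly exchanging $E\leftrightarrow E^c$ arrange $\epsilon_\ell:=\lambda(E\triangle U_\ell)\to 0$.

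Finally, derive the contradiction from Assumption \ref{A:infsum}. The construction of $U_{\ell+1}$ from $U_\ell$ interchanges, via $\iota$, two pieces $A_\ell\subset U_\ell\cap V_{\ell+1}$ and $\iota(A_\ell)\subset V_\ell\cap U_{\ell+1}$, each of Lebesgue measure $\tfrac12 q_{n_{\ell+1}}\fp{q_{n_{\ell+1}}\alpha}\asymp 1/a_{n_{\ell+1}+1}$. From $E\approx U_\ell$ we have $\lambda(A_\ell\cap E)\ge\lambda(A_\ell)-\epsilon_\ell$; from $A_\ell\subset V_{\ell+1}$ and $E\approx U_{\ell+1}$ we have $\lambda(A_\ell\cap E)\le\epsilon_{\ell+1}$; hence $\lambda(A_\ell)\le\epsilon_\ell+\epsilon_{\ell+1}$. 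The main obstacle is upgrading this easy triangle bound to an estimate ruling out $\epsilon_\ell\to 0$ together with $\sum 1/a_{n_\ell+1}=\infty$. The required refinement should come from analyzing the Rokhlin tower $\{T_\ell^i(\tilde J'_{\ell+1})\}_{i=0}^{q_{n_{\ell+1}}-1}$, on whose levels the $T$-invariance of $E$ propagates densities so as to force them to be nearly equal, combined with Assumption \ref{A:2} ($a_{n_k}=a_{n_k-1}=2$) controlling the commensurability of successive towers---this is the standard Sataev dichotomy argument carried out in our setting.
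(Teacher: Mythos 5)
Your high-level strategy---rule out any $T$-invariant set $E$ with $\iota(E)=E^c$ by comparing it against the $T_\ell$-invariant decomposition $U_\ell\cup V_\ell$ and invoking the divergence $\sum_\ell a_{n_\ell+1}^{-1}=\infty$---is aligned in spirit with the paper, but the concrete mechanism you propose is not the one the paper uses, and it has a genuine unclosed gap that you yourself flag. The inequality $\lambda(A_\ell)\le\epsilon_\ell+\epsilon_{\ell+1}$, combined with $\sum_\ell\lambda(A_\ell)=\infty$, only yields $\sum_\ell\epsilon_\ell=\infty$, which is perfectly consistent with $\epsilon_\ell\to 0$ (take $\epsilon_\ell\asymp\lambda(A_\ell)$). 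Sharpening the triangle inequality does not help: using $B_\ell=\iota(A_\ell)$ one gets at best $\lambda(A_\ell)\le\tfrac12(\epsilon_\ell+\epsilon_{\ell+1})$, which changes nothing. The suggested refinement via ``analyzing the Rokhlin tower'' and Assumption \ref{A:2} is an aspiration rather than an argument; nothing in the proposal forces $\sum\epsilon_\ell<\infty$ or produces any other contradiction, so the proof does not close.

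The paper's actual proof never attempts to control $\lambda(E\,\Delta\,U_\ell)$. Instead it introduces the sets $S_k=\bigcup_{i=0}^{q_{n_k-1}-1}R^i(J_k')\times\bZ_2$, proves a quasi-independence estimate $\lambda(S_k\cap S_L)\ge C\,\lambda(S_k)\lambda(S_L)$ via Denjoy--Koksma (Lemma \ref{lem:hits right}), and then applies a second Borel--Cantelli-type lemma together with $\sum_k\lambda(S_k)\asymp\sum_k a_{n_k+1}^{-1}=\infty$ to conclude that $\lambda$-almost every point belongs to infinitely many $S_k$. The decisive step is then pointwise rather than measure-theoretic (Proposition \ref{prop:ue suff}): if $\mu\neq\lambda$ were ergodic, it would be the weak-$*$ limit of normalized Lebesgue measure on the $U_\ell$; a $\mu$-generic $p$ lying in $S_{\ell+1}$ has its orbit segment of length $q_{n_{\ell+1}-1}$ entirely disjoint from $U_\ell$, so by Denjoy--Koksma the Birkhoff average over that segment of the indicator of a $\mu$-density interval drops far below its $\mu$-measure; since this happens for infinitely many $\ell$, it contradicts the Birkhoff theorem. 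This ``an orbit segment spends too long on the wrong side'' mechanism---not a bound on the symmetric difference $E\,\Delta\,U_\ell$---is what converts the divergence in Assumption \ref{A:infsum} into unique ergodicity, and it is the ingredient missing from your outline.
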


It is likely that this theorem can be derived from work of Trevi\~{n}o \cite[Theorem 3]{trev}. (To do this one would find a flat surface $\omega$ where $T$ arises as a first return map of the vertical flow. One would then determine the systoles of $g_t\omega$.) 
 We apply a different approach.

Define $$S_k=\bigcup_{i=0}^{q_{n_k-1}-1} R^i(J_k')\times \bZ_2.$$

By Assumption \ref{A:2}, $$\frac 1 3 q_{n_{k}}=\frac 1 3(2q_{n_{k}-1}+q_{n_{k}-2})<q_{n_k-1}<\frac 1 2 q_{n_k}.$$ 
If $p\in S_k$, then  $p$ is either in $U_k\cap V_{k-1}$ or $V_k\cap U_{k-1}$, and by the last inequality and Theorem \ref{T:skew} part 2, the orbit of $p$ for time at least $q_{n_k-1}$ remains either exclusively in $U_k\cap V_{k-1}$ or exclusively in $V_k\cap U_{k-1}$. 

Recall that the length of $J_k'$ is 
$$\fp{q_{n_k}\alpha} \geq \frac{1}{2q_{{n_k}+1}}  \geq \frac{1}{3q_{{n_k}}a_{n_{k}+1}}.$$ By Assumption \ref{A:infsum} and the inequality above, this gives that the sum of the measures of the $S_k$ is infinite.  

Let $\lambda_{S^1}$ denote Lebesgue measure on $S^1$.
\begin{lem}\label{lem:hits right} There exists $C>0$ such that if $L \neq k$
$$\lambda(S_k\cap S_L)\geq C \lambda(S_k)\lambda(S_L).$$
\end{lem}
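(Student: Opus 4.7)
The plan is to reduce to a counting problem for an orbit of the base rotation and apply Denjoy-Koksma. By the symmetry of the inequality in $k$ and $L$, I may assume $L>k$, in which case $\ell_L \ll \ell_k$, where $\ell_m := \fp{q_{n_m}\alpha}$. Since the $\bZ_2$-fiber over $\pi_{S^1}(S_k)$ is fully contained in $S_k$, one has $\lambda(S_k\cap S_L)=\lambda_{S^1}(\pi_{S^1}(S_k)\cap\pi_{S^1}(S_L))$ and, using Lemma \ref{L:dis} to get disjointness of the translates $R^iJ_k'$, one has $\lambda(S_k)=\mu_k:=q_{n_k-1}\ell_k$. Assumption \ref{A:2} gives $q_{n_k}/q_{n_k-1}<3$, which together with the lower bound $\ell_k\geq 1/(2q_{n_k+1})$ from Theorem \ref{T:CF} yields the uniform bound $\mu_k\geq 1/(6a_{n_k+1}+2)$.

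Next, I would reduce the estimation of $\lambda(S_k\cap S_L)$ to counting how many of the $q_{n_L-1}$ translates $R^iJ_L'$ lie entirely inside $\pi_{S^1}(S_k)$. Let $E\subset\pi_{S^1}(S_k)$ be the $\ell_L/2$-interior, i.e. the union of the centrally shrunken intervals of length $\ell_k-\ell_L$. Then $E$ is a union of $q_{n_k-1}$ intervals, so $\chi_E$ has total variation at most $2q_{n_k-1}$, and $\lambda(E)=\mu_k-q_{n_k-1}\ell_L$. If $y$ denotes the midpoint of $J_L'$, then $R^iy\in E$ implies $R^iJ_L'\subset\pi_{S^1}(S_k)$; and the translates $R^iJ_L'$ for $i\in[0,q_{n_L-1})$ are themselves pairwise disjoint by Lemma \ref{L:dis} applied at level $L$, so
\[
\lambda(S_k\cap S_L)\;\geq\;\ell_L\cdot\#\{i\in[0,q_{n_L-1}):R^iy\in E\}.
\]

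Applying Denjoy-Koksma (Theorem \ref{T:R}(6)) with $n=n_L-1$ bounds this count below by $q_{n_L-1}\lambda(E)-2q_{n_k-1}$. The main obstacle is verifying, uniformly in pairs $L>k$, the inequality $q_{n_L-1}\mu_k\geq 8q_{n_k-1}$, which is what is needed to absorb both the boundary error $q_{n_k-1}\ell_L$ appearing in $\lambda(E)$ and the Denjoy-Koksma error $2q_{n_k-1}$, and still retain at least $q_{n_L-1}\mu_k/4$. Since $1/\mu_k=O(a_{n_k+1})=o(\log\log q_{n_k})$ by Assumptions \ref{A:2} and \ref{A:a_nsmall}, while $q_{n_L-1}/q_{n_k-1}\geq q_{n_{k+1}-1}/q_{n_k-1}$ grows faster than any polynomial in $\log q_{n_k+1}$ by Assumption \ref{A:rapid up} (which also forces $n_{k+1}\geq n_k+2$, since otherwise $q_{n_{k+1}}=q_{n_k+1}$ would contradict the $o$-estimate), the desired inequality holds for all but finitely many pairs; the remaining pairs can be absorbed into the constant $C$. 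Multiplying out then yields $\lambda(S_k\cap S_L)\geq \ell_L q_{n_L-1}\mu_k/4=\mu_k\mu_L/4$, proving the lemma with $C=1/4$.
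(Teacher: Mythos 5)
Your proof is correct and follows the same basic strategy as the paper: reduce to a counting problem for the base rotation $R$, apply Denjoy--Koksma, and reduce the whole thing to the single critical estimate $q_{n_L-1}\fp{q_{n_k}\alpha}\gg 1$, verified via $q_{n_L-1}>\tfrac13 q_{n_L}$ and Assumption~\ref{A:rapid up}. The organizational difference is in where Denjoy--Koksma is invoked. The paper fixes one constituent interval $I_k$ of $\pi_{S^1}(S_k)$, applies Denjoy--Koksma to $\chi_{I_k}$ (total variation $2$, so error $2$) along the $q_{n_L-1}$-orbit of an arbitrary $x\in J_L'$, and then integrates over $x\in J_L'$ and sums over $I_k$; the per-interval error is $O(1)$ and partial overlaps are handled automatically by the integration. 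You instead fix the midpoint $y$ of $J_L'$ and apply Denjoy--Koksma to $\chi_E$ for the shrunken set $E$ (total variation $2q_{n_k-1}$, so error $2q_{n_k-1}$); the shrinking by $\ell_L/2$ is needed precisely to convert ``$R^iy\in E$'' into ``$R^iJ_L'\subset\pi_{S^1}(S_k)$'' and avoid counting partial overlaps. Your bookkeeping works out: $q_{n_L-1}\ell_k\geq 8$ already gives $\ell_L<1/q_{n_L-1}\leq\ell_k/8$, which controls the boundary term. Both approaches are therefore equivalent in substance, with the paper's being slightly slicker since its Denjoy--Koksma error stays bounded rather than scaling with $k$. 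One caveat on the closing step: when you ``absorb the remaining finitely many pairs into $C$,'' you implicitly need $\lambda(S_k\cap S_L)>0$ for those pairs, which is not established; however, the paper's own proof (which only shows the error ratio $\to 1$) has the same implicit caveat, and in any case only large $k,L$ are needed for the unique ergodicity argument, so this is not a meaningful gap.
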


\begin{proof} 
Without loss of generality, we assume $L>k$. 
By definition, the projection of $S_k$ to $S^1$ is an orbit of length  $q_{n_{k}-1}$ of an interval of size $\fp{q_{n_{k}}\alpha}$, and the projection of $S_L$ 
  is an orbit of length $q_{n_{L}-1}$ of an interval of size $\fp{q_{n_{L}}\alpha}$. 

Thus it suffices to show that there is some $C>0$ such that if $I_k$ is an interval of size $\fp{q_{n_{k}}\alpha}$, then for any $x$, 
$$\sum_{i=0}^{q_{n_{L}-1}-1} \chi_{I_k}(R^i(x)) \geq C q_{n_{L}-1} \lambda_{S^1}(I_k).$$
The desired result then follows by summing as $I_k$ ranges over the intervals of $S_k$, and integrating $x$ over an interval of size $\fp{q_{n_{L}}\alpha}$.

We now prove the sufficient condition. By Denjoy-Koksma, the sum is within $2$ of $q_{n_{L}-1} \lambda_{S^1}(I_k)$. Observing 
\begin{eqnarray*}
\frac{q_{n_{L}-1}\lambda_{S^1}(I_k)-2 }{q_{n_{L}-1} \lambda_{S^1}(I_k)} 
&\geq&1- \frac{1}{q_{n_{L}-1}\fp{q_{n_{k}}\alpha}} 
\\&\geq& 1  - \frac{2q_{n_{k}+1}}{q_{n_{L}-1}} 
\\&\geq&1 - \frac{6q_{n_{k}+1}}{q_{n_{k+1}}} \to 1
\end{eqnarray*}
gives the result. In the last line, we used $L>k$, $q_{n_L-1}>\frac 1 3 q_{n_L}$ and Assumption \ref{A:rapid up}. 
\end{proof}

\begin{prop}\label{prop:ue suff} To prove that $T$ is uniquely ergodic, it suffices to show for $\lambda$ almost every $x$ we have that $(x,i)$ is in $S_k$ for infinitely many $k$.
\end{prop}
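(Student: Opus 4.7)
The plan is a proof by contradiction. Assume $T$ is not uniquely ergodic. Since $T$ is a $\mathbb{Z}_2$ skew product over the uniquely ergodic rotation $R$, a standard disintegration of invariant measures along the fiber $\mathbb{Z}_2$ shows this is equivalent to the existence of a measurable $\phi : S^1 \to \mathbb{Z}_2$ satisfying the cohomological equation
\[
\phi(Rx) \equiv \phi(x) + \chi_J(Rx) \pmod 2.
\]
In that case $T$ has exactly two ergodic probability measures, $\mu_1 = 2\lambda|_W$ and $\mu_2 = \iota_\ast \mu_1 = 2\lambda|_{W^c}$, where $W = \{(x, \phi(x)) : x \in S^1\}$ is the graph of $\phi$; in particular $\lambda(W) = 1/2$, $\iota(W) = W^c$, and $\mu_1 + \mu_2 = 2\lambda$. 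It suffices to show that under this assumption $\mu_1$-almost every $p$ lies in $S_k$ for only finitely many $k$; by $\iota$-symmetry the same then holds $\mu_2$-a.e., and since $\mu_1 + \mu_2 = 2\lambda$ this forces the same $\lambda$-a.e., contradicting the hypothesis.

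The key input is the fact, recorded in the paragraph preceding Lemma \ref{lem:hits right}, that if $p \in S_k$ then the $T$-orbit of $p$ of length $q_{n_k-1}$ stays entirely in $U_k \cap V_{k-1}$ or entirely in $V_k \cap U_{k-1}$. Since $U_\ell$ is the graph of the $\mathbb{Z}_2$-cocycle $\phi_\ell$ for $\chi_{B_\ell}$, this translates into the structural statement that on a segment of length $q_{n_k-1}$ of $x$'s $R$-orbit, the value of $\phi$ at each orbit point coincides throughout with either $\phi_k$ or $\phi_{k-1}$. Writing $\phi = \phi_k + \psi_k$, the "error" $\psi_k$ is a $\mathbb{Z}_2$-cocycle for $\chi_{J \setminus B_k}$, and $\lambda(J \setminus B_k) = \sum_{j > k} 2\fp{q_{n_j}\alpha}$ decays super-exponentially by Assumption \ref{A:toavoid0}, so $\psi_k$ is "small" in a tightly controlled sense.

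Combining the orbit-containment property with the smallness of $\psi_k$, the plan is to derive a contradiction via ergodic-theoretic tools. The main obstacle is that a direct Borel--Cantelli argument via the maximal ergodic inequality for $(T, \mu_1)$ applied to $\chi_{U_k \triangle U_{k-1}}$ only yields $\mu_1(S_k) \leq C\lambda(U_k \triangle U_{k-1}) \approx C/a_{n_k+1}$, whose sum diverges by Assumption \ref{A:infsum}; so a naive Borel--Cantelli does not close. One must extract extra cancellation from the precise cocycle structure linking $\phi$ to $\phi_k$ and $\phi_{k-1}$: observe that the sign of $\psi_k$ being forced to be constant along a long $R$-orbit segment inside $\tilde S_k = \bigcup_{i=0}^{q_{n_k-1}-1} R^i(J_k')$ is incompatible, for $\mu_1$-a.e.\ $p$ and for all large $k$, with combining the Denjoy--Koksma inequality (Theorem \ref{T:R}) applied to the rotation $R$ with the rapid decay of $\lambda(J \setminus B_k)$ and the disjointness properties of the iterates $R^i(J'_k)$ established in Lemma \ref{L:dis}. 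Making this Denjoy--Koksma-type cancellation precise, so as to produce a summable bound strong enough for Borel--Cantelli, is the crux of the proof.
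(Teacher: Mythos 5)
Your reduction is sound as far as it goes: the two-ergodic-measure structure with $\mu_1=\iota_*\mu_2$, $\mu_1+\mu_2=\lambda$ (equivalently the graph/coboundary description) is exactly the content of Lemma \ref{lem:erg decomp}, and the correct target is indeed to show that under non-unique ergodicity a $\mu_1$-typical point lies in only finitely many $S_k$, contradicting the hypothesis since $\mu_1\ll\lambda$. But the argument stops there: you yourself label the remaining step --- producing the cancellation that makes a Borel--Cantelli bound summable --- ``the crux of the proof,'' and that crux is the entire content of the proposition. Worse, the route you sketch has two concrete problems. First, a Borel--Cantelli argument needs $\sum_k \mu_1(S_k)<\infty$, and by your own estimate $\mu_1(S_k)\asymp 1/a_{n_k+1}$, whose sum diverges by Assumption \ref{A:infsum}; this divergence cannot be bargained away, because it is exactly what is needed elsewhere (Lemma \ref{lem:hits right} and the claim in Proposition \ref{prop:suff holds}) to make $\lambda$-a.e.\ point hit infinitely many $S_k$. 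Second, the ``extra cancellation'' you invoke rests on the inference that $\psi_k$ is small because $\lambda(J\setminus B_k)$ is small; that inference is false in general --- a transfer function for a cocycle over a set of small measure need not be small in measure. Indeed here the successive graphs $U_k$ differ by sets of measure $\approx 2q_{n_k}\fp{q_{n_k}\alpha}\asymp 1/a_{n_k+1}$, whose sum diverges, so the sequence $U_k$ is not even Cauchy in measure and there is no a priori closeness of the limiting graph $W$ to $U_k$. Controlling precisely this is the delicate point of the Katok--Sataev--Veech construction, and your sketch assumes it rather than proves it.

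The paper's proof avoids summability arguments entirely and replaces them by a failure-of-equidistribution argument. It first upgrades Lemma \ref{lem:erg decomp} only to the weak statement that $\mu$ is the weak-* limit of normalized Lebesgue measure on the $U_k$; this uses the equidistribution lemma for $T_k$ proved just before the proposition, whose input is Denjoy--Koksma together with the variation bound coming from Lemma \ref{L:Uo} ($U_k$ is a union of $o(q_{n_{k+1}})$ intervals), plus the fact that $T_k=T$ for $q_{n_k}$ steps off a small set. It then takes a density interval $I$ with $\mu(I)\geq 0.99\,\lambda(I)$ and observes that a point of $S_{\ell+1}$ avoids $U_\ell$ for time $q_{n_{\ell+1}-1}>\tfrac13 q_{n_{\ell+1}}$ (note the index shift), so by Denjoy--Koksma applied to the projection of $V_\ell\cap I$ its Birkhoff average of $\chi_I$ over that time window is at most $0.2\,\lambda(I)$; hence any point in infinitely many $S_\ell$ has $\liminf$ of its Birkhoff averages of $\chi_I$ at most $0.2\,\lambda(I)$ and cannot be $\mu$-generic. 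If you want to salvage your approach, you should abandon the summable-bound/Borel--Cantelli framing and instead aim at this kind of statement: membership in $S_{\ell+1}$ forces a long orbit segment to stay on the wrong side of the (approximate) graph, which is incompatible with Birkhoff's theorem for $\mu$ at a single density interval.
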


This will be proved by showing that ergodic measures are absolutely continuous with respect to Lebesgue and points that are in infinitely many $S_k$  cannot be generic for an ergodic measure unless the ergodic measure is Lebesgue.

\begin{lem}\label{lem:erg decomp}If $T$ is not uniquely ergodic then there exist exactly two ergodic probability measures $\mu,\nu$ with $\mu=\iota(\nu)$, and $\lambda=\mu+\nu$, and both $\mu$ and $\nu$ are  absolutely continuous with respect to Lebesgue.
\end{lem}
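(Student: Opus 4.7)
I would exploit the $\bZ_2$ skew product structure of $T$ over the uniquely ergodic rotation $R$. The starting point is that any $T$-invariant probability measure $\mu$ projects under $\pi_{S^1}$ to an $R$-invariant probability measure on $S^1$, which must be $\lambda_{S^1}$. Disintegrating along the two-point fibers expresses $\mu$ as $\mu = f_0(x)\,d\lambda_{S^1}\otimes\delta_0 + f_1(x)\,d\lambda_{S^1}\otimes\delta_1$ with $f_0+f_1=1$, so every $T$-invariant probability measure is absolutely continuous with respect to $\lambda$ with density bounded by $2$. A standard consequence is that $T$ is uniquely ergodic if and only if $\lambda$ is $T$-ergodic, since two distinct ergodic measures are mutually singular and this is incompatible with both being absolutely continuous with respect to $\lambda$.

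Next I would decompose $L^2(\lambda) = L^2(S^1) \oplus L^2(S^1)\cdot\psi$, where $\psi(x,j) := (-1)^j$. This decomposition is preserved by the Koopman operator of $T$; on the first summand $T$ acts as $R$ and hence has only constants as invariants, while on the second summand $T$-invariance of $G\psi$ is equivalent to the cocycle equation
\[ G(R(x)) = (-1)^{\chi_J(R(x))}\, G(x). \]
If $T$ is not uniquely ergodic then $\lambda$ is not $T$-ergodic by the previous paragraph, so the cocycle equation has a nonzero solution $G\in L^2$. Taking absolute values makes $|G|$ an $R$-invariant function and hence constant, so $G$ can be normalized to be $\{\pm 1\}$-valued; any two such solutions differ by an $R$-invariant $\{\pm 1\}$-valued function, and hence by an overall sign. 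Consequently the $T$-invariant subspace of $L^2(\lambda)$ is exactly two-dimensional, spanned by $\mathbf{1}$ and $G\psi$.

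Finally I would set $A^\pm := G^{-1}(\pm 1)$ and $\tilde{A}^\pm := (A^\pm \times \{0\}) \cup (A^\mp \times \{1\})$, so that the indicator of $\tilde{A}^\pm$ equals $\tfrac{1}{2}(\mathbf{1} \pm G\psi)$ and is therefore $T$-invariant. Since the involution $\iota$ swaps these two sets and preserves $\lambda$, both have $\lambda$-measure $\tfrac{1}{2}$. Defining $\mu := 2\lambda|_{\tilde{A}^+}$ and $\nu := 2\lambda|_{\tilde{A}^-}$ produces $T$-invariant probability measures, absolutely continuous with respect to $\lambda$, satisfying $\iota_*\mu = \nu$ and $\tfrac{1}{2}(\mu+\nu) = \lambda$. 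These are the only ergodic invariant probability measures: each such measure corresponds to a $T$-invariant density in $L^2(\lambda)$, and the two-dimensional $T$-invariant subspace meets the probability simplex in a line segment whose endpoints are precisely $2\chi_{\tilde{A}^\pm}$. There is no serious obstacle; the essential input throughout is the unique ergodicity of $R$, used both to identify the projection of $\mu$ in the first step and to force $R$-invariant $L^2$ functions to be constants in the second.
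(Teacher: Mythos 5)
Your proposal is correct, but it takes a genuinely different route from the paper. The paper's proof is a short symmetry argument: starting from an ergodic $\nu\neq\lambda$, it sets $\mu=\iota_*\nu$, observes that $\mu\neq\nu$ (else $\nu$ would be $\iota$-invariant, project to Lebesgue, and hence equal $\lambda$), notes that $\tfrac12(\mu+\nu)$ is $\iota$-invariant and projects to Lebesgue and so equals $\lambda$ (giving absolute continuity), and finally rules out a third ergodic measure by uniqueness of ergodic decomposition. You instead first show directly that every invariant measure is absolutely continuous with bounded density by disintegrating over the fibers, reduce to non-ergodicity of $\lambda$, and then perform Fourier analysis on the $\bZ_2$ fiber: the decomposition $L^2(\lambda)=L^2(S^1)\oplus L^2(S^1)\psi$ is $U_T$-invariant, the invariant vectors in the twisted summand are solutions of a multiplicative cocycle equation over $R$, and ergodicity of $R$ forces any such solution to be $\{\pm1\}$-valued and unique up to sign. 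This pins down the invariant $\sigma$-algebra explicitly as generated by one set, from which the two ergodic pieces are read off as normalized restrictions. The paper's argument is shorter and leans on abstract uniqueness of ergodic decomposition; yours is more constructive, produces the invariant sets concretely, and shows the $T$-invariant subspace of $L^2(\lambda)$ is exactly two-dimensional, which is slightly more information. Both rest crucially on unique ergodicity of $R$ and on the fact that $\iota$ commutes with $T$. One small wording point: in your step one, the statement ``two distinct ergodic measures are mutually singular and this is incompatible with both being absolutely continuous with respect to $\lambda$'' reads loosely; what you actually need (and what the argument supplies) is that an ergodic $\mu\neq\lambda$ would satisfy $\mu\perp\lambda$ while also $\mu\ll\lambda$, forcing $\mu=0$. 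Stated that way the step is airtight.
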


\begin{proof}
Suppose $\nu$ is an ergodic measure that is not Lebesgue.  Since $\iota$ commutes with $T$, $\mu=\iota(\nu)$ must also be ergodic. If $\mu=\nu$, then $\mu$ is $\iota$ invariant and hence must be Lebesgue, since $R$ is uniquely ergodic. Since $R$ is uniquely ergodic, $\nu$ and $\mu$ both project to Lebesgue. 

Since $\mu+\nu$ is $\iota$ invariant and $R$ is uniquely ergodic, $\lambda=\mu+\nu$. Hence $\mu$ and $\nu$ are absolutely continuous with respect to Lebesgue. 

If $T$ had a third ergodic measure $\mu'$, then $\mu'=\iota(\nu')$ would also be ergodic, and we'd have $\mu'+\nu'=\mu+\nu$. This contradicts uniqueness of  ergodic decompositions. 
\end{proof}

\begin{lem}\label{L:Uo}
$U_k$ is the union of $o(q_{n_{k+1}})$ disjoint intervals. 
\end{lem}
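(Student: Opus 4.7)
The plan is to bound the number of intervals comprising $U_\ell$ inductively in $\ell$, by tracking the boundary points of $U_\ell$ in $S^1 \times \bZ_2$. The first step is to rewrite the recursive definition of $U_{\ell+1}$ from Lemma~\ref{L:UV} as a symmetric difference. Because $T_\ell$ preserves $U_\ell$ and $V_\ell$, the set $Q_{\ell+1} = \bigcup_{i=0}^{q_{n_{\ell+1}}-1} T_\ell^i(\tilde{J}'_{\ell+1} \cap U_\ell)$ lies in $U_\ell$ and $P_{\ell+1} = \bigcup_{i=0}^{q_{n_{\ell+1}}-1} T_\ell^i(\tilde{J}'_{\ell+1} \cap V_\ell)$ lies in $V_\ell$; from this one checks directly that $U_{\ell+1} = U_\ell \triangle A_{\ell+1}$, where $A_{\ell+1} := P_{\ell+1} \cup Q_{\ell+1} = \bigcup_{i=0}^{q_{n_{\ell+1}}-1} T_\ell^i(\tilde{J}'_{\ell+1})$.

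Next I would count boundary points. By Lemma~\ref{L:dis}, the projection of $A_{\ell+1}$ to $S^1$ is a disjoint union of $q_{n_{\ell+1}}$ arcs, so $A_{\ell+1}$ is a disjoint union of $2q_{n_{\ell+1}}$ arcs in $S^1 \times \bZ_2$, contributing at most $4q_{n_{\ell+1}}$ new boundary points. Letting $b_\ell$ denote the number of boundary points of $U_\ell$, the symmetric difference relation yields $b_{\ell+1} \leq b_\ell + 4q_{n_{\ell+1}}$. Since $b_0 = 0$, induction gives $b_k \leq 4\sum_{m=1}^{k} q_{n_m}$. The number of intervals of $U_k$ is then bounded by $b_k/2 + 2$, the additive constant allowing for possible whole-circle components.

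To finish, Assumption~\ref{A:toavoid0} gives $\sum_{m=1}^k q_{n_m} < q_{n_k+1}/2$, so the number of intervals is at most $q_{n_k+1} + 2$. Shifting the index in Assumption~\ref{A:rapid up} yields $\log q_{n_k+1} = o(\log q_{n_{k+1}})$, whence $q_{n_k+1} = o(q_{n_{k+1}})$, which is the desired conclusion.

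I do not expect any serious obstacle here. The only delicate step is the symmetric difference identification, and beyond that the argument is bookkeeping of boundary points together with two of the growth assumptions on the continued fraction expansion; the lemma is essentially a combinatorial byproduct of how the $U_\ell$ were constructed, and the tight sparsity of $\{n_k\}$ encoded in Assumption~\ref{A:rapid up} is exactly what upgrades the trivial $O(q_{n_k+1})$ bound to the required $o(q_{n_{k+1}})$.
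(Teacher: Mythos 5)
Your proof is correct and follows the paper's approach: both arguments bound the number of intervals in $U_k$ by $O\bigl(\sum_{i=1}^k q_{n_i}\bigr)$ using the inductive definition of $U_\ell$ (you make this step explicit via the symmetric-difference identity, which the paper leaves to the reader), and both then reduce to showing $\sum_{i=1}^k q_{n_i} = o(q_{n_{k+1}})$ via Assumption~\ref{A:toavoid0}. The only minor divergence is in the final step: the paper invokes Assumption~\ref{A:growing} directly, using $q_{n_{k+1}} > a_{n_k+1}q_{n_k}$ and $a_{n_k+1}\to\infty$, while you invoke Assumption~\ref{A:rapid up}, which is strictly stronger than needed but works just as well.
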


\begin{proof}
By the inductive definition of $U_k$, it is clear that $U_k$ is the union of at most $O\left( \sum_{i=1}^k q_{n_i}  \right)$ disjoint intervals. Thus it suffices to show that $\sum_{i=1}^k q_{n_i}$ is $o(q_{n_{k+1}})$. To do so, note the following crude estimate, 
$$q_{n_{k+1}}> q_{n_{k}+1}>a_{n_{k}+1}q_{n_k}.$$ 
Note also that by Assumption \ref{A:toavoid0}, 
$$\sum_{i=1}^{k} q_{n_i} = q_{n_k}+\sum_{i=1}^{k-1} q_{n_i}<2q_{n_k}.$$ Thus Assumption \ref{A:growing} gives the result. 
\end{proof}

\begin{lem}Let $A\subset S^1\times \bZ_2$ be any measurable set. For any $\epsilon>0$,
$$\lim_{k\to\infty} \lambda\left(\left\{(x,j)\in U_{k}: \left|\frac 1 {q_{n_{k}}} \sum_{i=0}^{q_{n_{k}}-1}\chi_A(T_{k}^i(x,j))-\lambda(A\cap U_{k})\right|>\epsilon\right\}\right)=0.$$ The same result is true with $U_{k}$ replaced by $V_{k}$.
\end{lem}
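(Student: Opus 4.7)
The plan is to exploit that $\pi_{S^1}$ conjugates $T_k|_{U_k}$ to the base rotation $R$ to reduce the claim to a Birkhoff-type equidistribution statement on $S^1$, and then use the strong equidistribution property of $R$ at time $q_{n_k}$ from Theorem~\ref{T:R}(5). The $V_k$ statement will follow by applying the involution $\iota$, which interchanges $U_k$ and $V_k$ and commutes with $T_k$.

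First I would observe that, since $U_k \sqcup V_k = S^1 \times \bZ_2$ and $V_k = \iota(U_k)$, for $\lambda_{S^1}$-a.e.\ $y \in S^1$ there is a unique $\phi_k(y) \in \bZ_2$ with $(y,\phi_k(y)) \in U_k$; the $T_k$-invariance of $U_k$ then gives $T_k(y,\phi_k(y)) = (R(y),\phi_k(R(y)))$, so $\pi_{S^1}|_{U_k}$ is an a.e.\ measure-preserving bijection conjugating $T_k|_{U_k}$ to $R$. Setting $B_k := \pi_{S^1}(A \cap U_k)$, one has $\lambda(A \cap U_k) = \lambda_{S^1}(B_k)$ and the Birkhoff sum of $\chi_A$ along the $T_k$-orbit of $(x,j) \in U_k$ equals that of $\chi_{B_k}$ along the $R$-orbit of $\pi_{S^1}(x,j)$. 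By Theorem~\ref{T:R}(5), for every $y$ the orbit $\{R^i y\}_{i=0}^{q_{n_k}-1}$ visits each basic interval $I_r = [r/q_{n_k},(r+1)/q_{n_k})$ exactly once; so letting $\hat B_k$ be the union of those $I_r$ with $\lambda_{S^1}(B_k \cap I_r) \geq 1/(2q_{n_k})$, one obtains $\tfrac{1}{q_{n_k}}\sum_{i=0}^{q_{n_k}-1}\chi_{\hat B_k}(R^i y) = \lambda_{S^1}(\hat B_k)$ identically in $y$. Writing $g_k := \chi_{B_k} - \chi_{\hat B_k}$, the deviation in the lemma becomes $\tfrac{1}{q_{n_k}}\sum_{i=0}^{q_{n_k}-1}g_k(R^i y) - \int_{S^1}g_k\,d\lambda_{S^1}$, whose $L^1(\lambda_{S^1})$-norm is bounded by $2\|g_k\|_{L^1} = 2d_k$ using $R$-invariance, where $d_k := \lambda_{S^1}(B_k \triangle \hat B_k)$. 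Markov's inequality will then reduce the lemma to showing $d_k \to 0$.

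Next, given $\delta > 0$, I would approximate $A$ in $L^1$ by a finite disjoint union $A'$ of product sets $I_r \times \{\epsilon_r\}$ with $\lambda(A \triangle A') < \delta$. The cell-by-cell bound $|\min(a,b) - \min(a',b')| \leq \max(|a-a'|,|b-b'|)$ yields $|d_k - d'_k| \leq \lambda_{S^1}(B_k \triangle B'_k) \leq \delta$, reducing the problem to showing $d'_k \to 0$ for each fixed $A'$. For such $A'$, $B'_k$ is a finite union of intersections $I_r \cap \phi_k^{-1}(\epsilon_r)$, and its boundary consists of the fixed endpoints of the $I_r$ plus the jump points of $\phi_k$ inside the $I_r$; the latter arise, by the construction of $U_k$, as unions of $R$-orbits of endpoints of the intervals $J'_\ell$ for $\ell \leq k$.

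The hard part will be the level-by-level accounting of these boundary contributions. For $\ell < k$, the separation property (Theorem~\ref{T:R}(4)) implies that level-$\ell$ jump points are spaced $\geq 1/q_{n_\ell} \geq 1/q_{n_k}$ apart, so each occupies at most one basic cell and contributes $\leq 1/(2q_{n_k})$ to $d'_k$; summing over the $O(q_{n_\ell})$ such points gives an $O(q_{n_\ell}/q_{n_k})$ bound, and the total over $\ell < k$ is $O(q_{n_{k-1}}/q_{n_k}) \to 0$ by Assumption~\ref{A:rapid up}. For $\ell = k$, each level-$k$ interval has length $\fp{q_{n_k}\alpha} \leq 1/q_{n_k+1} \ll 1/q_{n_k}$ and so sits essentially inside a single basic cell, contributing discrepancy at most its own length; summing over the $q_{n_k}$ level-$k$ intervals yields $O(q_{n_k}/q_{n_k+1}) = O(1/a_{n_k+1}) \to 0$ by Assumption~\ref{A:growing}. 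Combining the two estimates gives $d'_k \to 0$, completing the proof.
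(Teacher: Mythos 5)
Your proposal is correct, and it follows the same high-level strategy as the paper (conjugate $T_k|_{U_k}$ to $R$ via $\pi_{S^1}$, approximate $A$, exploit that the structure of $U_k$ changes slowly with $k$, then bound the discrepancy at time $q_{n_k}$ via the orbit structure), but the two bounding steps are packaged quite differently. The paper first reduces to $A$ a single interval, then performs an explicit index shift --- replacing $A_k := \pi_{S^1}(A\cap U_k)$ by $A_{k-1}$, with $L^1$ error $\to 0$ --- so that Lemma~\ref{L:Uo} ($U_{k-1}$ is a union of $o(q_{n_k})$ intervals) gives $\Var(\chi_{A_{k-1}}) = o(q_{n_k})$ and a single application of Denjoy-Koksma finishes. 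You instead invoke Theorem~\ref{T:R}(5) directly: you round $B_k$ to a union $\hat B_k$ of basic cells of size $1/q_{n_k}$ (whose Birkhoff average at time $q_{n_k}$ is exact), bound the deviation in $L^1$ by $2\,\lambda_{S^1}(B_k\triangle\hat B_k)$, and estimate that symmetric difference by a boundary count. Your level-by-level split is the paper's index shift in disguise: the level-$k$ piece is controlled by the smallness of $\lambda(U_k\triangle U_{k-1})$, and the levels $\ell<k$ are controlled by the boundary count, which is essentially the content of Lemma~\ref{L:Uo} reproved inline. One imprecision worth flagging: the level-$\ell$ jump points are \emph{not} pairwise $\geq 1/q_{n_\ell}$-separated as stated --- the two endpoints of a single copy $R^i(J'_\ell)$ are only $\fp{q_{n_\ell}\alpha}$ apart --- but this does not damage the estimate, since the bound you actually need is (number of boundary points)$\,\cdot\,1/(2q_{n_k})$ regardless of how those points cluster into cells. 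Both arguments reach the same place; the paper's version is cleaner mainly because Lemma~\ref{L:Uo} is already available and Denjoy-Koksma absorbs the cell-by-cell bookkeeping, whereas your version is a little more self-contained but has to redo that bookkeeping by hand.
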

Note that $T_k$ appears in the statement, not $T$.
\begin{proof}
It suffices to show this statement for $A$ an interval, since $A$ can be approximated by a union of intervals. 

 Let $A_k$ be the projection of $A\cap U_{k}$ to $S^1$. Since $U_k$ is $T_k$ invariant and projects bijectively to $S^1$, the lemma is equivalent to  
 $$\lim_{k\to\infty} \lambda\left(\left\{x\in S_1: \left|\frac 1 {q_{n_{k}}} \sum_{i=0}^{q_{n_{k}}-1}\chi_{A_k}(R^i(x))-\lambda(A_k)\right|>\epsilon\right\}\right)=0.$$
 
 Note that since the measure of the symmetric difference of $U_k$ and $U_{k-1}$ goes to zero,
  the measure of the symmetric difference of $A_k$ and $A_{k-1}$ must also go to zero. 
 
 Thus $|\chi_{A_k}-\chi_{A_{k-1}}|$ has  $L^1$ norm going to zero, so the set of points where the Birkhoff sums for $\chi_{A_k}$ and $\chi_{A_{k-1}}$ at time $q_{n_k}$ differ by more than $\epsilon/2$ goes to zero (simply because a function with small $L^1$ norm can't be big very often: $\lambda(\{x:f(x)>C\})<\frac{\|f\|_1}C$). Hence, it suffices to show the equivalent result with $A_k$ replaced by $A_{k-1}$ and $\epsilon$ replaced by $\epsilon/2$.
 
 By the previous lemma, the set $A\cap U_{k-1}$  is a disjoint union of $o(q_{n_k})$  disjoint intervals, and hence $\chi_{A_{k-1}}$ has total variation  $o(q_{n_k})$. The statement is now implied by Denjoy-Koksma for the function $\chi_{A_{k-1}}$. 
 \end{proof}
The  proof of Proposition \ref{prop:ue suff} will consist of two main steps.  The first is  to show that if $\mu$ is an ergodic measure other than $\lambda$ it is without loss of generality the weak-* limit of uniform measure on the $U_k$. The second step is to show that if $(x,i)$ is in  $S_k$ for infinitely many $k$ it can not be a generic point  for $\mu$.
\color{black}
\begin{proof}[Proof of Proposition \ref{prop:ue suff}] Assume that $\lambda$ is not uniquely ergodic. Then Lemma \ref{lem:erg decomp} gives two ergodic measures $\mu$ and $\nu$.  Let $A$ be a $T$ invariant set such that  $\mu(A)=1$ and $\nu(A)=0$. Let $\epsilon>0$ be arbitrary. By the previous lemma, we can find a $k_0$ such that  for all $k\geq k_0$ we have
$$\lambda\left(\left\{(x,j)\in U_{k}: \left|\frac 1 {q_{n_{k}}} \sum_{i=0}^{q_{n_{k}}-1}\chi_A(T_{k}^i(x,j))-\lambda(A\cap U_{k})\right|>\epsilon\right\}\right)<\epsilon.$$
Using a measure estimate as in the proof of Claim 1 of Theorem \ref{thm:output jon}, we see that, possibly after increasing $k_0$, we may assume that for all $k\geq k_0$, 
$$\lambda(\{(x,j):T_k^i(x,j)= T^i(x,j) \text{ for all }0\leq i\leq q_{n_k}\})>1-\epsilon.$$ 
Because $A$ is $T$ invariant we have that for all $r$  $$\frac 1 {r} \sum_{i=0}^{r-1}\chi_A(T^i(x,j))$$ is almost everywhere either 0 or 1. Keeping in mind that since $\lambda=\mu+\nu$ we have $\lambda(A)=1$, it follows from the previous lemma that for all  $k\geq k_0$ we have 
$$\lambda(U_k\cap A)>1-2\epsilon \text{ and } \lambda(V_k \cap A)<2\epsilon$$ or 
$$\lambda(U_k\cap A)<2\epsilon \text{ and }\lambda(V_k \cap A)>1-2\epsilon.$$ Since $\underset{k \to \infty}{\lim} \, \lambda(U_{k-1} \Delta U_k)=0$ the property that $\lambda(U_k \cap A)$ is almost 0 or almost 1 is eventually constant (in $k$). Without loss of generality, suppose $\lambda(U_k\cap A)>1-2\epsilon$ for all large enough $k$.  Since this is true for all $\epsilon>0$, and since $\mu$ projects to Lebesgue, it follows that $\mu$ is the weak-* limit of uniform measure on the $U_k$.

Let $p=(x,i)$ be a point that is $\mu$ generic and that is contained in infinitely many $S_\ell$. Thus there are infinitely many times $\ell$ for which the orbit  of $p$ is disjoint from $U_\ell$ for time $q_{n_{\ell+1}-1}$. (As we remarked at the beginning of this section, this is the case when $p\in S_{\ell+1}\cap V_\ell \cap U_{\ell+1}$. Note the ``index shift" by one: to get disjointness from $U_\ell$ for a long time, we use points in $S_{\ell+1}$. )

By the existence of a density point for the set $A$, 
 there must be some interval $I\in S_1\times \bZ_2$ such that $\mu(I) \geq 0.99 \lambda(I)$. Hence for large $k$, $V_k$ contains at most $0.1$ of the $\lambda$ measure of $I$. The projection of $V_k\cap I$ to $S^1$ thus has measure at most $0.1 \lambda(I)$, and consists of $o(q_{n_{k+1}})$ intervals (Lemma \ref{L:Uo}). Now assume the orbit segment of length of $q_{n_{k+1}-1}$ of $(x,i)$ is disjoint from $U_k$. Because $q_{n_{k+1}-1}>\frac 1 3 q_{n_{k+1}}$ by Denjoy-Koksma, the orbit of $x$ up to time $q_{n_{k+1}-1}$ spends at most  $0.2 \lambda(I)$ of its time in the projection of $I$ to $S_1$. 

Hence $$\underset{N \to \infty}{\liminf} \frac 1 N \sum_{i=0}^{N-1} \chi_{I}(T^ip)\leq .2\lambda(I),$$ since $\sum_{i=0}^{N-1} \chi_{I}(T^ip)$ is  
bounded by a Birkhoff sum of the characteristic function of the projection of $I\cap V_k$ to $S^1$.

 Thus for $p$, the liminf of the Birkhoff sums of $\chi_I$ is at most $0.2\lambda(I)$, which is a contradiction to Birkhoff's ergodic theorem, and the facts that $p$ is $\mu$ generic and $\mu(I) \geq 0.99 \lambda(I)$. 
\end{proof}
\color{black}
\begin{proof}[Proof of Proposition \ref{prop:suff holds}] We first prove a well known and straightforward result.

\bold{Claim:} Let $A_i$ be measurable subsets of a space with a measure $\lambda$ of total mass 1. 
  If there exists $C>0$ such that $\lambda(A_i\cap A_j)>C\lambda(A_i)\lambda(A_j)$ and  $\sum_{i=1}^{\infty}\lambda(A_i)=\infty$,  then $\lambda\left( \LS A_i\right)>0$.

\bold{Proof of Claim:} Let $B_{N,M}=\cup_{i=N}^M A_i$. If $\sum_{i=N}^M\lambda(A_i)<\frac 1 {2C}$ then for any $j \notin [N,M]$ we have that
$$\lambda(A_j \setminus B_{N,M})\geq \lambda(A_j)-\sum_{i=M}^NC\lambda(A_j)\lambda(A_i)>\frac 1 2 \lambda(A_j).$$ 
  Because $\sum \lambda(A_i)=\infty$, we have $\lambda(B_{N,\infty})\geq \frac 1 {4C}$ for all $N$. Because we are in a finite measure space it follows that $\LS A_i$ has positive measure, which proves the claim.

 Now we complete the proof of the proposition. By Lemma \ref{lem:hits right}  we have  $\lambda\left(\LS S_i\right)>0$. Thus the set of points in infinitely many $S_k$ has positive Lebesgue measure. 
Note $(x,i) \in S_k$ depends only on $x$ and being in infinitely many $S_i$ is almost everywhere $R$ invariant. This is because the difference between $S_i$ and $T(S_i)$ is at most two intervals of size $\fp{q_{n_i}\alpha}$, so the difference between $\LS S_i$ and $T\left(\LS S_i\right)$ 
has size bounded by $\sum_{j=i}^{\infty}2\fp{q_{n_j}\alpha}$ 
for all $i$ and hence must be measure zero.  

 So by the ergodicity of $R$ almost every point is in infinitely many $S_i$.
\end{proof}

\section{Mixing of the suspension flow}\label{S:mix}

The purpose of this section is to prove Theorem \ref{T:genus2}, by proving that the flow over $T$ with roof function $f$ is mixing. 
This section shows that if $T$ is a multi-interval exchange transformation that is ergodic with respect to Lebesgue measure, and $f$ is an integrable function with $f(x)\geq 1$ for all $x$, and such that $f$ and $T$ satisfy Theorem \ref{thm:output jon}, then the suspension flow over $T$ with roof function $f$ is mixing. The fact that estimates like Theorem \ref{thm:output jon} are sufficient for mixing is standard, see for example the sufficient condition for mixing in \cite[Theorem 2.1]{koc_1}.

\begin{lem}
There is a set $I(M)$ of disjoint subintervals of $G(M)$ such that each interval has size between $\frac{1}{M\sqrt{\log(M)}}$ and $\frac{2}{M\sqrt{\log(M)}}$, and consists entirely of points $p$ for which 
$$\frac34 MC \leq \sum_{i=0}^{M-1} f(T^ip) \leq \frac43 M C,$$
where $C=\int f$, and such that the union of the intervals of $I(M)$ has measure going to 1. 
\end{lem}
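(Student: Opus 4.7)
The idea is to combine pointwise convergence of Birkhoff averages of $f$ (from unique ergodicity) with the derivative-type estimate in Theorem \ref{thm:output jon}(3) to upgrade asymptotic control on $\sum_{i=0}^{M-1} f(T^ip)$ into uniform control on small intervals.

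First, since $f$ is integrable (its logarithmic singularities are in $L^1$) and $T$ is uniquely ergodic by Proposition \ref{prop:suff holds}, Lebesgue is the only $T$-invariant probability, and Birkhoff's ergodic theorem gives $\frac{1}{M}\sum_{i=0}^{M-1} f(T^ip) \to C = \int f$ for $\lambda$-a.e.\ $p$. Therefore the set
$$E(M) = \left\{ p : \tfrac{7}{8} M C \leq \sum_{i=0}^{M-1} f(T^i p) \leq \tfrac{9}{8} M C \right\}$$
satisfies $\lambda(E(M)) \to 1$.

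Next, by Theorem \ref{thm:output jon}(1) each interval of $G(M)$ has length at least $\frac{2}{M\sqrt{\log M}}$, and the elementary observation that the admissible integer counts $k$ with $L/k \in [\frac{1}{M\sqrt{\log M}}, \frac{2}{M\sqrt{\log M}}]$ form an interval of length at least $1$ allows each interval of $G(M)$ to be partitioned evenly into consecutive subintervals of length in the required range. On any such subinterval $J$, Theorem \ref{thm:output jon}(6) gives continuity of $T^i$ for $0 \leq i \leq M$. Since $T$ acts as a translation on the $S^1$-factor, the map $p \mapsto \sum_{i=0}^{M-1} f(T^ip)$ is differentiable on $J$ with derivative $\sum_{i=0}^{M-1} f'(T^ip)$, and Theorem \ref{thm:output jon}(3) bounds this derivative by $O(M\log M)$ uniformly on $G(M)$. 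The mean value theorem then gives a variation bound of $O(M\log M) \cdot \frac{2}{M\sqrt{\log M}} = O(\sqrt{\log M})$ for the Birkhoff sum across any subinterval.

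Finally, let $I(M)$ consist of those subintervals that meet $E(M)$. For each $J \in I(M)$, the variation bound yields
$$\sum_{i=0}^{M-1} f(T^ip) \in \left[\tfrac{7}{8} MC - O(\sqrt{\log M}),\ \tfrac{9}{8} MC + O(\sqrt{\log M})\right] \subset \left[\tfrac{3}{4} MC, \tfrac{4}{3} MC\right]$$
for all large $M$ and every $p \in J$. The discarded subintervals are all contained in $G(M) \setminus E(M)$, which has vanishing measure, so $\lambda(\bigcup I(M)) \to 1$. The only delicate step is the variation estimate tying Theorem \ref{thm:output jon}(3) (derivative bound) to Theorem \ref{thm:output jon}(6) (continuity of iterates); the rest is Birkhoff's theorem and bookkeeping.
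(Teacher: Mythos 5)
Your proof is correct and follows essentially the same strategy as the paper's: partition $G(M)$ into subintervals of the required size, use the Birkhoff Ergodic Theorem (via unique ergodicity) to find a full-measure set where the Birkhoff average is close to $C$, keep the subintervals that meet this set, and apply the Mean Value Theorem with the derivative bound from Theorem \ref{thm:output jon}(3) to propagate the estimate across each retained subinterval. The only differences are cosmetic (you use the margin $[\frac78,\frac98]$ where the paper uses $[\frac45,\frac54]$, and you explicitly invoke part (6) for continuity of the iterates, which the paper leaves implicit).
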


\begin{proof}
Since $G(M)$ is the disjoint union of intervals of length at least $\frac{2}{M\log(M)}$, it can be divided into disjoint intervals of length between $\frac{1}{M\sqrt{\log(M)}}$ and $\frac{2}{M\sqrt{\log(M)}}$. Let $I_0(M)$ be the set of these disjoint intervals.

By the Birkhoff Ergodic Theorem, the set of points $p$ for which 
$$\frac45 MC \leq \sum_{i=0}^{M-1} f(T^ip) \leq \frac54 M C$$
has measure going to 1. Let $I(M)$ be the set of intervals in $I_0(M)$ that contain such a point $p$. 

The Mean Value Theorem and Theorem \ref{thm:output jon} part (3)  give that if $p$ satisfies the above bound, then all points within distance $\frac{2}{M\sqrt{\log(M)}}$ of $p$ satisfy the weaker bound in the lemma statement for $M$ large enough. 
\end{proof}

We now wish to study the curve $F^{CM}(I)$, where $I$ is a fixed interval in $I(M)$. To this end, if $p\in I$, let $N(p)$ denote the unique integer such that $$\sum_{i=0}^{N(p)-1} f(T^i p) \leq CM < \sum_{i=0}^{N(p)} f(T^i p).$$ So $N(p)$ is the number of times $F^t(p)$ returns to the base up to time $t=CM$, counting $t=0$.

We recall that by definition, in suspension flows vertical segments are orbit segments of the flow.
\begin{lem}\label{L:graph}
For $M$ large enough, for every interval $I\in I(M)$ the graph of $$\sum_{i=0}^{N(p)-1} f(T^i p)$$ lies within $o(1)$ of a vertical line segment of length between $\sqrt{\log(M)}/4$ and $5\sqrt{\log(M)}$. Moreover, this is true in a parametrized sense: the graph is piecewise $C^1$ with slope within $o(M\sqrt{\log(M)})$ of a  constant at every point.
\end{lem}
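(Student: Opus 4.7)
The plan is to decompose $I$ into the maximal subintervals on which $N(p)$ is constant (the ``smooth pieces"), apply Theorem \ref{thm:output jon} on each piece, and then assemble the pieces. Write $S(p)=\sum_{i=0}^{N(p)-1}f(T^ip)$. The preliminary step is to check that $N(p)\in[M/2,2M]$ for every $p\in I$, which follows from $f\geq 1$, $\int f = C$, and the hypothesis $\tfrac34 MC\leq\sum_{i=0}^{M-1}f(T^ip)\leq \tfrac43 MC$ defining $I(M)$; this puts all relevant orbits inside the range of validity of Theorem \ref{thm:output jon}(3)--(5).

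On each smooth piece with $N(p)\equiv N$, $S$ is $C^1$ with $S'(p)=\sum_{i=0}^{N-1}f'(T^ip)=\pm N\log N+o(N\log N)$ by Theorem \ref{thm:output jon}(3). The sign is constant across all smooth pieces of $I$: on the one hand $I$ lies in a single connected component of $G(M)$ on which the sign in Theorem \ref{thm:output jon}(3) is constant for each fixed $N$; on the other, changing $N$ by one alters $S'$ by a single term of size $O(M(\log M)^{1/12})$ on $G(M)$, which is far too small to flip the dominant $\pm N\log N\asymp M\log M$. The total smooth variation of $S$ is thus $\int_I|S'(p)|\,dp$, and substituting $N\in[M/2,2M]$ and $|I|\in[(M\sqrt{\log M})^{-1},2(M\sqrt{\log M})^{-1}]$ lands this in $[\sqrt{\log M}/4,5\sqrt{\log M}]$ for $M$ large. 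In the suspension the discontinuities of $S$ are absorbed by the roof identification, and this integral equals the flow-length of the continuous curve $F^{CM}(I)$, giving the claimed vertical segment.

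For the parametrized statement I would bound the variation of $S'$ by two contributions. Within a smooth piece, $|S'(p_1)-S'(p_2)|\leq\|S''\|_\infty|I|\leq o((\log M)^{1/3}M^2)\cdot 2(M\sqrt{\log M})^{-1}=o(M(\log M)^{-1/6})=o(M\sqrt{\log M})$, using Theorem \ref{thm:output jon}(4). Between consecutive pieces $S'$ jumps by a single term $\pm f'(T^{N-1}p_j)$, and the number of jump points is $O(\sqrt{\log M})$ by a counting argument: for fixed $N$, the increment of $F(\cdot,N)$ across $I$ is at most $5\sqrt{\log M}$ by the length estimate, and each decrement of $N$ requires $F$ to drop by at least $\min f\geq 1$. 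Since the indices $N_j-1$ then all lie in an interval of length $O(\sqrt{\log M})$, Theorem \ref{thm:output jon}(5) applied at any fixed $p_0\in I$ bounds $\sum_j|f'(T^{N_j-1}p_0)|$ by $o(M\sqrt{\log M})$; the error from replacing $p_0$ with $p_j$ is $\|f''\|_\infty|I|$ per term, summing to lower order.

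The main obstacle is the slope control, since parts (3)--(5) of Theorem \ref{thm:output jon} are each calibrated to yield exactly $o(M\sqrt{\log M})$ after multiplication by $|I|\sim(M\sqrt{\log M})^{-1}$. The estimates are tight, so one must carefully combine the within-piece, between-piece, and orbit-shift contributions to verify that their total still fits under the $o(M\sqrt{\log M})$ bound; in particular the exponent $\tfrac{1}{12}$ in the definition of $B(M)$, which controls $\|f''\|_\infty$, is what makes the orbit-shift error negligible. The length estimate itself is comparatively routine, being just part (3) integrated over $I$ together with the sign-constancy observation.
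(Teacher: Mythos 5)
Your proposal follows essentially the same route as the paper's proof: both bound the vertical length by integrating Theorem \ref{thm:output jon}(3) over $I$, both derive the bound $|N(p)-N(p')| \le 5\sqrt{\log M}$ from the length estimate together with $f \ge 1$, and both control the slope variation by combining Theorem \ref{thm:output jon}(4) (base-point change within a piece) with Theorem \ref{thm:output jon}(5) (changing the number of terms in the Birkhoff sum). The only cosmetic difference is that you organize the slope control by explicitly summing jumps over the $O(\sqrt{\log M})$ smooth pieces, whereas the paper compares the slope at an arbitrary $p'$ directly to a fixed reference value $s$; both yield the same $o(M\sqrt{\log M})$ bound via identical estimates.
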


\begin{proof}  
 The previous lemma gives that 
$$\frac34 CN(p) \leq \sum_{i=0}^{N(p)-1} f(T^i p) \leq \sum_{i=0}^{N(p)} f(T^i p) \leq \frac43 C(N(p)+1),$$
 so $\frac34 M-1 \leq N(p) \leq \frac 43M$ for all $p\in I$. We will only use the weaker bounds $\frac12 M\leq N(p) \leq 2M$.

The vertical length of $F^{CM}(I)$ is 
$$\int_I  \sum_{i=0}^{N(p)-1}  f'(T^i p)dp,$$
which, using Theorem \ref{thm:output jon} and $N(p)\leq 2M$, has size at most 
\begin{eqnarray*}
\int_I  \left(\left|\sum_{i=0}^{2M}  f'(T^i p)dp\right|\right)
&\leq &2\frac{2M\log(2M)+o(2M\log(M))}{M\sqrt{\log(M)}}
\\&\leq & 5\sqrt{\log(M)}
\end{eqnarray*}
for $M$ sufficiently large. 

Similarly, $$\left|\int_I  \sum_{i=0}^{N(p)-1}  f'(T^i p)dp\right|\geq \frac{\frac 1 2 M\log(\frac 1 2 M)+o(M\log(M))}{M\sqrt{\log(M)}}>\frac{\sqrt{\log(M)}}4$$
for $M$ sufficiently large.

Now, since $F^{CM}(I)$ has length at most $5\sqrt{\log(M)}$, and the roof function $f$ is always at least 1, it follows that $F^{CM}(I)$ can hit the base at most $5\sqrt{\log(M)}$ times. That is, if $p$ and $p'$ are any two points in $I$, then $|N(p)-N(p')|\leq 5\sqrt{\log(M)}$. 

Now let $N=N(p)$ for any point $p\in I$, and let $p'\in I$ be another point. Define 
$$s=\sum_{i=0}^{N(p)-1}  f'(T^i p).$$
By Theorem \ref{thm:output jon} part (3) we have $s=\pm N\log(N) + o(N \log N)$.
By Theorem \ref{thm:output jon} part (4) and the Mean Value Theorem we have that 
$$\left| s - \sum_{i=0}^{N(p)-1}  f'(T^i p')\right|\leq  o((\log(N))^{\frac 1 3-\frac12}N)$$
and by  Theorem \ref{thm:output jon} part (5) we have 
$$\left| \sum_{i=0}^{N(p')-1}  f'(T^i p') - \sum_{i=0}^{N(p)-1}  f'(T^i p')\right|=o(N \sqrt{\log (N)}).$$
We get that the given graph is piecewise $C^1$ with slope within $o(M\sqrt{\log(M)})$ of $s$ at every point. 
This completes the proof. 
\end{proof}

Let $\Lambda$ denote 2 dimensional Lebesgue probability measure on the suspension of $T$ by $f$. Let $R$ be a rectangle, by which we mean a rectangle contained strictly under the graph of $f$. Say that a point $p$ is $(L, \epsilon, R)$ good if every vertical line $V$ through $p$ of length at least $L$ has 
$$\left| m_V(V\cap R)-\Lambda(R)\right| \leq \epsilon,$$ 
where $m_V$ is the 1 dimensional Lebesgue probability measure on $V$. 

\begin{lem}\label{lem:most good} 
For fixed $\epsilon$ and $R$, the set of points that are $(L, \epsilon, R)$ good has measure going to 1 as $L\to\infty$. 
\end{lem}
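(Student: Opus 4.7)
The plan is to deduce the lemma from the Birkhoff Ergodic Theorem for the suspension flow, combined with Egorov's theorem. First, I would invoke the running hypothesis that $T$ is ergodic with respect to Lebesgue measure, which implies by a standard argument that the suspension flow $(F^t,\Lambda)$ is ergodic with respect to $\Lambda$. Applying the Birkhoff Ergodic Theorem to the bounded function $\chi_R$, both forwards and backwards along the flow, yields, for $\Lambda$-a.e.\ $p$,
$$\lim_{T\to\infty}\frac{1}{T}\int_0^T \chi_R(F^t p)\,dt \;=\; \Lambda(R) \;=\; \lim_{T\to\infty}\frac{1}{T}\int_{-T}^0 \chi_R(F^t p)\,dt.$$

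Given $\delta>0$, Egorov's theorem produces a threshold $L'$ and a set $B$ with $\Lambda(B)>1-\delta$ on which both averages are within $\epsilon/4$ of $\Lambda(R)$ uniformly for all $T\geq L'$. The key step is to show that every $p\in B$ is $(L,\epsilon,R)$-good as soon as $L\geq 8L'/\epsilon$. Given any vertical line $V$ through $p$ of length $\ell\geq L$, split $V$ at $p$ into a forward flow orbit segment $V^+$ of length $\ell^+$ and a backward flow orbit segment $V^-$ of length $\ell^-$, with $\ell^++\ell^-=\ell$. When $\ell^\pm\geq L'$ the Egorov estimate gives $|\int_{V^\pm}\chi_R - \ell^\pm\Lambda(R)|\leq (\epsilon/4)\ell^\pm$; when $\ell^\pm<L'$ the same quantity is trivially at most $\ell^\pm\leq L'$. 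Summing the two pieces and dividing by $\ell$ yields
$$\bigl|m_V(V\cap R)-\Lambda(R)\bigr|\;\leq\;\frac{\epsilon}{2}+\frac{2L'}{\ell}\;\leq\;\frac{\epsilon}{2}+\frac{2L'}{L}\;<\;\epsilon.$$
Since $\delta>0$ was arbitrary, the $\Lambda$-measure of the $(L,\epsilon,R)$-good points tends to $1$ as $L\to\infty$.

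The main point to be careful about is the case where $p$ sits near one endpoint of $V$, so that one of $\ell^+,\ell^-$ falls below the Egorov threshold $L'$ and the Birkhoff estimate does not apply to the short side. This is precisely why the trivial bound $\ell^\pm\leq L'$ is included above; it is absorbed into the error term $O(L'/L)$, which is negligible once $L$ dominates $L'/\epsilon$. Beyond this book-keeping, the argument is essentially the Birkhoff Ergodic Theorem applied to the indicator of $R$.
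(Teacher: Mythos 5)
Your argument is correct and is essentially the paper's own approach: the paper disposes of this lemma with the single line ``This follows from ergodicity of the flow,'' and your Birkhoff--Egorov argument, with the splitting of $V$ at $p$ into forward and backward orbit segments to handle the position of $p$ within the vertical segment, is exactly the standard expansion of that remark.
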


\begin{proof}
This follows from ergodicity of the flow. 
\end{proof}

\begin{lem}
Let $R$ and $R'$ be two rectangles. Then 
$$\lim_{t\to\infty} \Lambda(R\cap F^t(R'))\to \Lambda(R)\Lambda(R').$$
\end{lem}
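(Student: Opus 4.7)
The plan is the standard Kochergin-style shearing argument. Given $t$ large, choose $M$ with $CM$ close to $t$; by continuity of $F^s$ in $s$ it suffices to treat $t=CM$. Approximate $R'$ from inside by $\cup_j (I_j\times[a,b])$, where $I_j$ ranges over intervals of $I(M)$ whose projection to $S^1\times\bZ_2$ lies inside the base of $R'$. By Lemma~8.1 this approximation has error $o(1)$ in $\Lambda$. By Fubini and the change of variables $s\mapsto t':=CM+s$,
\[
\int_{I_j\times[a,b]}\chi_R(F^{CM}(p))\,dp
\;=\;\int_{CM+a}^{CM+b}\!\!\int_{I_j}\chi_R(F^{t'}(x,0))\,dx\,dt'.
\]
So everything reduces to estimating, for $t'$ in a bounded interval around $CM$, the Lebesgue measure of $\{x\in I_j:F^{t'}(x,0)\in R\}$.

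For fixed $t'$, Lemma~\ref{L:graph} tells us that the curve $\gamma_{t'}:=F^{t'}(I_j\times\{0\})$ is, up to $o(1)$ vertical error and at most $5\sqrt{\log M}$ return-jumps, a piecewise vertical segment of total vertical length $S_{t'}\in[\tfrac14\sqrt{\log M},5\sqrt{\log M}]$; moreover its parametrization $x\mapsto F^{t'}(x,0)$ has slope $dy/dx$ within a $(1+o(1))$-factor of a constant $s_{t'}$ with $|s_{t'}|\sim N\log N\sim M\log M$. Changing variables from $x\in I_j$ to the vertical coordinate $y$ of $\gamma_{t'}$, the Jacobian is $|dy/dx|\approx |s_{t'}|$, whence
\[
\int_{I_j}\chi_R(F^{t'}(x,0))\,dx\;\approx\;\frac{1}{|s_{t'}|}\cdot(\text{length of }\gamma_{t'}\cap R).
\]
The key equidistribution input is Lemma~\ref{lem:most good}: for $\epsilon$ fixed and $L=\tfrac14\sqrt{\log M}$, a set of points of measure $1-o(1)$ is $(L,\epsilon,R)$-good, so for most $(I_j,t')$ the foot of $\gamma_{t'}$ is good, and then $|\gamma_{t'}\cap R|=(\Lambda(R)\pm\epsilon)S_{t'}+o(S_{t'})$ (the few return-jumps in the piecewise decomposition have total length $o(S_{t'})$ and can be absorbed into error). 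Since $S_{t'}\approx |s_{t'}|\,|I_j|$ (vertical extent equals slope times horizontal extent of the curve), this gives
\[
\int_{I_j}\chi_R(F^{t'}(x,0))\,dx\;=\;(\Lambda(R)\pm\epsilon)\,|I_j|+o(|I_j|).
\]

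Integrating over $t'\in[CM+a,CM+b]$ and summing over $j$,
\[
\Lambda(R\cap F^{CM}(R'))\;=\;\sum_j(\Lambda(R)\pm\epsilon)\,|I_j|\,(b-a)+o(1)
\;=\;(\Lambda(R)\pm\epsilon)\,\Lambda(R')+o(1),
\]
using $\sum_j|I_j|(b-a)=\Lambda(R')+o(1)$. Since $\epsilon>0$ is arbitrary, this yields $\Lambda(R\cap F^{CM}(R'))\to\Lambda(R)\Lambda(R')$. The general rectangle case reduces to this by additivity in $R'$, approximating a general rectangle by such product pieces. The main technical obstacle is the geometric step bridging Lemmas~\ref{L:graph} and~\ref{lem:most good}: one must verify that the image curve $\gamma_{t'}$ is close enough to a literal vertical segment, and based at a good point for a set of $(j,t')$ of nearly full measure, so that the proportion of its length inside $R$ really is $\Lambda(R)\pm\epsilon$. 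This uses that the horizontal deviation $|I_j|\sim 1/(M\sqrt{\log M})$ is negligible compared with the scales of $R$, that the slope is nearly constant on $I_j$ (Lemma~\ref{L:graph}), and that the number of return-jumps is $O(\sqrt{\log M})$ so they absorb into the $o(S_{t'})$ error.
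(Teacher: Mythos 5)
Your proposal follows essentially the same route as the paper: approximate $R'$ by vertical strips over intervals of $I(M)$, use Lemma~\ref{L:graph} to replace $F^{t'}(I_j\times\{0\})$ by a nearly vertical orbit segment in the suspension, and use Lemma~\ref{lem:most good} to show that such a segment equidistributes in $R$. The main substantive ideas — the Fubini reduction, the change of variables to arclength with Jacobian $\approx|s_{t'}|$, and the ``most feet are $(L,\epsilon,R)$-good'' step — all appear in the paper's proof in only slightly different clothing (the paper phrases the arclength comparison via the Mean Value Theorem and the slope bound from Theorem~\ref{thm:output jon}, and defines a set $D_L$ of good points and integrates against it).

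One step in your write-up does not work as stated. The opening reduction ``by continuity of $F^s$ in $s$ it suffices to treat $t=CM$'' is unjustified: continuity of $t\mapsto\Lambda(R\cap F^t(R'))$ only gives a bound of order $|t-CM|$, and $|t-CM|$ remains of size $O(C)$ rather than $o(1)$, so the discrepancy does not vanish. This is easily repaired and is in fact unnecessary in your own argument: for general $t$ set $M=\lfloor t/C\rfloor$ and integrate the inner estimate over $t'\in[t+a,t+b]$ rather than $[CM+a,CM+b]$; the estimate you establish holds uniformly over any bounded window around $CM$, since the inputs from Theorem~\ref{thm:output jon} are valid for all $N\in[M/2,2M]$ and part (5) controls the change in the slope over the window. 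This is how the paper proceeds. A second, smaller point: you flag the $o(1)$ discrepancy between $\gamma_{t'}$ and a genuine vertical segment as the main technical obstacle; the paper disposes of it cleanly by introducing slightly shrunk and enlarged rectangles $R_s\subset R\subset R_b$ and sandwiching $\frac1{\lambda(I)}\int_I\chi_R(F^{t+h}y)\,dy$ between $m_V(R_s\cap V)$ and $m_V(R_b\cap V)$ for the nearby vertical segment $V$, which is the tidy way to make your ``absorbed into error'' rigorous.
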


\begin{proof}
Let $\epsilon>0$ be arbitrarily small and in particular much smaller than the height and width of $R$. Let $R_s$ be the set of points in $R$ that have distance at least  $\epsilon/50$ to the boundary of $R$, and let $R_b$ be an $\epsilon/50$ neighbourhood of $R$. (``$s$" and ``$b$" stand for ``smaller" and ``bigger".)

Let $H$ denote the $y$-coordinate of the top edge of $R'$.  Define $D_L$ to be the set of points $p$ 
 such that all $F^h(p)$ with $0\leq h\leq H$ are both $(L, \epsilon/100, R_s)$ and $(L, \epsilon/100, R_b)$ good. Pick $L$ large enough such that $D_L$ has $\Lambda$-measure at least $1-\epsilon/100$.

 Let  $M_0$ be large enough so that
 \begin{itemize}
 \item for all $M\geq M_0$ we have $\sqrt{\log(M)}/4>\max(L,H)$,
 \item the $o(1)$ error in Lemma \ref{L:graph} is less than $\epsilon/100$,
 \item the union of the intervals in $I(M)$ has $\lambda$-measure 
  at least $1-\epsilon/100$,
 \item the error in Theorem \ref{thm:output jon} parts (3) and (5) are less than $\epsilon/1000$, 
 \item $2H/(M_0\sqrt{\log M_0})<\epsilon/1000$
 \end{itemize}

 \bold{Claim.} If $t\geq 10C M_0$, $M=\lfloor \frac t C \rfloor$, $I \in I(M)$ and there exists $x \in I$ such that $F^{t}x \in D_L$, then for any $0\leq h\leq H$ we have  $$\left| \frac 1 {\lambda(I)}\int_I \chi_R (F^{t+h}y)dy-\Lambda(R)\right|<\e/10.$$ 
 \bold{Proof of claim.} Let $V_0$ be the vertical line produced by Lemma \ref{L:graph} through $F^{CM}x $ that approximates $F^{CM}(I)$  to within $\epsilon/100$.
  and the Mean Value Theorem, the vertical line $V=F^{h+t-CM}(V_0)$ approximates  $F^{CM+h+(t-CM)}(I)$ to within $\epsilon/50$, and observe
$$
m_V(R_s\cap V)\leq \frac 1 {\lambda(I)}\int_I\chi_R (F^{t+h}x)dx\leq m_V(R_b \cap V).
$$
The claim follows since $F^{t}x \in D_L$.

Now, given $p\in R'$, let $x_p$ denote its $x$-coordinate, and let $y_p$ denote its $y$-coordinate.
Let  $t>10 CM_0$ and let  $M=\lfloor \frac t C\rfloor$. Set
$$P_t=F^{-t}(D_L) \cap\{p : x_p\in J \text{ for some }J \in I(M)\}.$$
For $p\in P_t$, let $J_M(p)$ be the interval $J \in I(M)$ containing $x_p$. 

We consider 
\begin{eqnarray*}
\Lambda(R\cap F^{t}(R'))&=&\int_{p\in P_t \cap R'} \frac 1 {\lambda(J_M(p))} \int _{J_M(p)}\chi_R(F^{t+y_p}(x))dxdp\\
&+&E
\end{eqnarray*}
where the error $E$ is at most the size of $P_t^c \cap R'$ plus $2H/(M\sqrt{\log(M)})$. 
  The second part of the error term comes from when $J_M(p)$ is not entirely contained in in the projection of $R'$ to the base.
In particular, the error $E$ is at most $\epsilon/10$. 

Noting that $0\leq y_p \leq H$ and applying Fubini and the above claim for the inner integral gives the result. 
\end{proof}

Since it is enough to prove mixing for rectangles, this proves Theorem \ref{T:genus2}.

\section{A mixing flow on a surface with non-degenerated fixed points}\label{S:surface}

Define $x_\infty=\alpha-\sum_{i=1}^{\infty}\fp{q_{n_i-1}\alpha}$ and $x_k=\alpha-\sum_{i=1}^{k}\fp{q_{n_i-1}\alpha}$. We now define another IET
$$\hat{T}:S^1\times\bZ_2\times\bZ_2 \to S^1\times\bZ_2\times\bZ_2$$ by 
$$\hat{T}(x,i,j)=(R(x),i +\chi_J(R(x)),j+\chi_{J+x_\infty}(R(x))).$$
The key observation about $\hat{T}$ is that its restriction to the first and second coordinates is $T$, and its restriction to the first and third coordinates is a ``translate" of $T$ by $x_\infty$. Our choice of $x_\infty$ was motivated by the proof of Theorem \ref{T:hatUE} below, which gives that $\hat{T}$ is uniquely ergodic. Observe that Assumption \ref{A:12} implies that the intervals $J$ and $x_\infty+J$ are disjoint.

Define the roof function 
\begin{eqnarray*}
\hat{f}(x,i,j)&=&1+ |\log(d(Rx,0))|+|\log(d(Rx,|J|)|\\
&+& |\log(d(Rx,x_\infty))|+ |\log(d(Rx,|J|+x_\infty))|\\
&+&\chi_{\{1\}}(j)|\log(d(Rx,0))|.
\end{eqnarray*} 
The first two lines put  logarithmic singularities of equal weight (coefficient) over all discontinuities of $\hat{T}$, and the third line introduces additional weight to the singularity over one pair of the discontinuities. 

The purpose of this section is to show 

\begin{thm}\label{T:sec9}
The flow over $\hat{T}$ with roof function $\hat{f}$ is mixing. 
\end{thm}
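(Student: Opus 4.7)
The plan is to mirror the proof of Theorem \ref{T:genus2}: establish an analog of Theorem \ref{thm:output jon} for $(\hat T, \hat f)$, and then invoke the rectangle-based mixing argument of Section \ref{S:mix} essentially verbatim. The abstract mixing argument at the end of Section \ref{S:mix} only uses the six listed properties of the good set, so no changes are needed there beyond producing a suitable $\hat G(M)\subset S^1\times\bZ_2\times\bZ_2$ and invoking the unique ergodicity of $\hat T$ (Theorem \ref{T:hatUE}).

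The key structural observation is that $\hat T$ decomposes into two coupled copies of $T$: the projection to the first two coordinates is exactly $T$, while the projection to the first and third coordinates is the same skew construction applied with $J+x_\infty$ in place of $J$, call it $T^{x_\infty}$. Because $J$ and $J+x_\infty$ are disjoint (Assumption \ref{A:12}), the two skew structures evolve on disjoint time-sets. Running the construction of Section \ref{S:skew} for $T^{x_\infty}$ produces invariant sets $U_\ell^{x_\infty}, V_\ell^{x_\infty}$ that are exact $x_\infty$-translates of $U_\ell, V_\ell$; the analog of Theorem \ref{T:skew}(1) then says $U_\ell^{x_\infty}$ contains $[x_\infty, x_\infty + \fp{q_{n_\ell}\alpha})\times\{0\}$ and $[x_\infty - \fp{q_{n_\ell}\alpha}, x_\infty)\times\{1\}$. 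In particular, one side of $0$ in the first coordinate lies almost entirely in $V_\ell^{x_\infty}\cap \{j=1\}$ while the other side lies almost entirely in $U_\ell^{x_\infty}\cap \{j=0\}$, which is the asymmetry of the set $\{j=1\}$ near $0$ that will drive mixing.

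To prove the analog of Theorem \ref{thm:output jon}, I would define the bad set $\hat B(M)$ exactly as in the proof of that theorem, but now also removing neighborhoods of the two additional discontinuities $x_\infty$ and $|J|+x_\infty$, iterates that escape $\hat U_{\ell+1} := S^1\times\bZ_2\times U_{\ell+1}^{x_\infty}$ or $\hat V_{\ell+1}$, and the orbit pieces of the translated intervals $J_\ell'+x_\infty$ used in Lemma \ref{L:insignificant2}. The Birkhoff sum $\sum_{i=0}^{N-1}\hat f'(\hat T^i p)$ then splits into four symmetric logarithmic contributions, one at each discontinuity of $\hat T$, each of which cancels to $o(N\log N)$ by Corollary \ref{C:sums} exactly as in Claim (3) of Theorem \ref{thm:output jon}, plus the asymmetric contribution coming from $\chi_{\{1\}}(j)|\log d(Rx,0)|$. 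For $p\in \hat G(M)$, the $j$-coordinate of the orbit is trapped inside one of $U_{\ell+1}^{x_\infty}$ or $V_{\ell+1}^{x_\infty}$, so the asymmetry described in the previous paragraph, combined with Lemmas \ref{L:insignificant1} and \ref{L:insignificant2}, forces this contribution to equal $\pm N\log N + o(N\log N)$. The bounds for $\hat f''$, for the short-time sums in parts (4)--(5), and the measure-theoretic properties of $\hat G(M)$ are proved by direct repetition of the corresponding arguments in Section \ref{sec:work}.

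The main obstacle is verifying the asymmetry claim for the translated skew system $T^{x_\infty}$. This should reduce to checking that the constructions of Sections \ref{S:skew}--\ref{sec:work} depend only on the continued-fraction data of $\alpha$ and on the length of $J$, both of which are preserved under translation by $x_\infty$. A secondary annoyance is that $\hat T$ has four discontinuities rather than two, so the symmetric cancellation must be carried out for two new pairs of logarithmic singularities; but the cancellation argument of Theorem \ref{thm:output jon}(3) depends only on the non-integrability of $g(x)=1/x$ near its pole, which is translation invariant, so the two new pairs contribute identically to the original two.
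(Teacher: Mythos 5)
There is a genuine gap in the key structural step. You claim that the asymmetry of the $j$-coordinate near $x=0$ follows because $U_\ell^{x_\infty}$ is an exact $x_\infty$-translate of $U_\ell$, and hence by translating Theorem~\ref{T:skew}(1) you get asymmetry near $x_\infty$, and then ``in particular'' asymmetry near $0$. That inference does not follow. The translate $U_\ell+x_\infty$ does contain $[x_\infty, x_\infty+\fp{q_{n_\ell}\alpha})\times\{0\}$ and $[x_\infty-\fp{q_{n_\ell}\alpha},x_\infty)\times\{1\}$, which is asymmetry near $x=x_\infty$. But the singularity carrying the extra weight $\chi_{\{1\}}(j)$ in $\hat f$ is at $d(Rx,0)$, i.e.\ at $Rx=0$, not at $x_\infty$. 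Translation invariance of the Katok--Sataev--Veech construction tells you nothing about the $j$-structure of $U_\ell+x_\infty$ near $0$, because the construction manifestly depends on the \emph{position} of $J$, not just its length. Generically, $0$ sits at some unremarkable spot in $U_\ell+x_\infty$ and no one-sided behavior occurs.

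What makes the paper's argument work is that $x_\infty$ is chosen very specifically: $x_\ell = \alpha - \sum_{i\le\ell}\fp{q_{n_i-1}\alpha}$ lands $J'_\ell+x_\ell$ exactly in the $R$-orbit of $J'_\ell$ (at time $N=1+\sum_{i\le\ell} q_{n_i-1}$, with $N<q_{n_\ell}$), and $x_\infty$ is within $\frac12\fp{q_{n_\ell}\alpha}$ of $x_\ell$ (Lemma~\ref{L:kinfinity}). This is what forces $0$ to fall, up to an error of $\frac12\fp{q_{n_\ell}\alpha}$, at the correct position so that Theorem~\ref{T:skew hat}(1) holds: $U_\ell^2$ contains $[0,\frac12\fp{q_{n_\ell}\alpha})\times\bZ_2\times\{0\}$ and $[1-\fp{q_{n_\ell}\alpha},1)\times\bZ_2\times\{1\}$. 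Establishing this is the real content of the genus-5 extension, and it requires the modified disjointness lemma (Lemma~\ref{L:dis hat}) and the modified small-orbit estimates (Lemmas~\ref{L:insignificant1 hat}, \ref{L:insignificant2 hat}, which replace $\fp{q_{n_{k-1}}\alpha}$ by $\frac12\fp{q_{n_{k-1}}\alpha}$ to absorb the error between $x_\ell$ and $x_\infty$). Your reduction to ``the construction depends only on continued-fraction data and $|J|$'' misses precisely this. As a secondary point, even your stated picture of the near-$0$ asymmetry is off: the desired conclusion is that both $[0,\epsilon)\times\{0\}$ and $[1-\epsilon,1)\times\{1\}$ lie in the \emph{same} minimal component $U_\ell^2$, not that the two sides are split between $U$ and $V$.
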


At the end of this section, we will use this to conclude Theorem \ref{T:main}.

\bold{Non-minimal approximates.} Define 

$$\hat{T}_\ell(x,i,j)=(R(x), \chi_{[0, \sum_{k=1}^\ell 2\fp{q_{n_k} \alpha})}(R(x)), \chi_{[0, \sum_{k=1}^\ell 2\fp{q_{n_k} \alpha})+x_\infty}(R(x))).$$

Define the following subsets of  $S^1\times\bZ_2\times\bZ_2$,
\begin{align*}
&U_k^1=\{(x,i,j): (x,i)\in U_k\},  &V_k^1&=\{(x,i,j): (x,i)\in V_k\},\\
&U_k^2=\{(x,i,j): (x,j)\in U_k+x_\infty\},  &V_k^2&=\{(x,i,j): (x,j)\in V_k+x_\infty\}.
\end{align*}

A corollary of Lemma \ref{L:invar} is the following. 

\begin{cor}
The above four sets are all  $\hat{T}_k$ invariant. 
\end{cor}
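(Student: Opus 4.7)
The proof is essentially a direct application of Lemma \ref{L:invar} together with the observation that $\hat{T}_k$ decouples into two independent skew products on pairs of coordinates.

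My plan is to consider the two coordinate projections
$$
\pi_{1,2}:S^1\times\bZ_2\times\bZ_2\to S^1\times\bZ_2,\quad (x,i,j)\mapsto (x,i),
$$
$$
\pi_{1,3}:S^1\times\bZ_2\times\bZ_2\to S^1\times\bZ_2,\quad (x,i,j)\mapsto (x,j).
$$
From the definition of $\hat{T}_k$, the map $\pi_{1,2}$ intertwines $\hat{T}_k$ with $T_k$, that is, $\pi_{1,2}\circ \hat{T}_k = T_k\circ \pi_{1,2}$. Since $U_k^1=\pi_{1,2}^{-1}(U_k)$ and $V_k^1=\pi_{1,2}^{-1}(V_k)$, and since $U_k$ and $V_k$ are $T_k$-invariant by Lemma \ref{L:invar}, their preimages $U_k^1$ and $V_k^1$ are $\hat{T}_k$-invariant.

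For $U_k^2$ and $V_k^2$, the plan is to introduce the translated skew product
$$
T_k':S^1\times\bZ_2\to S^1\times\bZ_2,\quad T_k'(x,j)=\bigl(R(x),\, j+\chi_{[0,\sum_{i=1}^k 2\fp{q_{n_i}\alpha})+x_\infty}(R(x))\bigr).
$$
The conjugation $\psi:S^1\times\bZ_2\to S^1\times\bZ_2$ defined by $\psi(x,j)=(x+x_\infty,j)$ satisfies $T_k'=\psi\circ T_k\circ \psi^{-1}$, so the $T_k'$-invariant sets are exactly $\psi(U_k)=U_k+x_\infty$ and $\psi(V_k)=V_k+x_\infty$ (where addition denotes translation in the first coordinate only). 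On the other hand, $\pi_{1,3}$ intertwines $\hat{T}_k$ with $T_k'$, so $U_k^2=\pi_{1,3}^{-1}(U_k+x_\infty)$ and $V_k^2=\pi_{1,3}^{-1}(V_k+x_\infty)$ are $\hat{T}_k$-invariant.

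There is no significant obstacle here: the only thing to verify carefully is that $\pi_{1,3}$ indeed intertwines $\hat{T}_k$ with the translated skew product $T_k'$, which is immediate from unwinding the definition of $\hat{T}_k$, and that conjugating $T_k$ by the base translation $\psi$ does translate the invariant sets as claimed, which is a formal consequence of $R$ being a rotation (so commuting with translations).
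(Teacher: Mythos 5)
Your argument is correct and is exactly the route the paper intends: the paper states the corollary without proof, relying on its earlier observation that the restriction of the dynamics to the first two coordinates is $T_k$ and to the first and third coordinates is a translate of $T_k$ by $x_\infty$, which together with Lemma \ref{L:invar} is precisely your projection/conjugation argument. Nothing further is needed.
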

Note that all of the sets 
$$U_k^1\cap U_k^2,\quad  U_k^1\cap V_k^2, \quad V_k^1\cap U_k^2,\quad V_k^1\cap V_k^2$$
are invariant and project bijectively to $S^1$. Since $R$ is minimal, we see that each of these four sets is minimal. Since $S^1\times\bZ_2\times\bZ_2$ is the disjoint union of these four sets, the minimal components of $\hat{T}_k$ are exactly these four sets. 

Before we continue, we note some estimates. 

\begin{lem}\label{L:kinfinity}
The following hold. 
\begin{enumerate}
\item $\fp{q_{n_\ell-1}\alpha}>4\fp{q_{n_\ell}\alpha}$.
\item $d(x_\ell,x_\infty)<\frac12 \fp{q_{n_\ell}\alpha}$.
\item If $N=1+\sum_{i=0}^{\ell} q_{n_i-1}$, then $\frac13 q_{n_\ell}\leq N< q_{n_\ell}$. 
\end{enumerate}
\end{lem}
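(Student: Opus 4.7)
The plan is to deduce all three parts from Theorem \ref{T:CF} together with Assumptions \ref{A:even}, \ref{A:12}, \ref{A:rapid up}, and \ref{A:2}. The unifying observation is that since every $n_\ell$ is even by Assumption \ref{A:even}, the integer $n_\ell-1$ is odd, so by the alternating property (Theorem \ref{T:CF}(2)) we may write $\fp{q_{n_\ell-1}\alpha}=1-\delta_\ell$ with $\delta_\ell:=p_{n_\ell-1}-q_{n_\ell-1}\alpha\in(0,1/q_{n_\ell})$ by Theorem \ref{T:CF}(3), while $\fp{q_{n_\ell}\alpha}=q_{n_\ell}\alpha-p_{n_\ell}$ lies in $(1/(2q_{n_\ell+1}),1/q_{n_\ell+1})$. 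These two size bounds will drive the entire argument.

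Part (1) follows at once: the lower estimate gives $\fp{q_{n_\ell-1}\alpha}>1-1/q_{n_\ell}>1/2$, whereas Assumption \ref{A:12} caps $\fp{q_{n_\ell}\alpha}$ well below $1/8$, so $4\fp{q_{n_\ell}\alpha}<1/2<\fp{q_{n_\ell-1}\alpha}$.

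For part (2), I would compute $x_\ell-x_\infty$ in $S^1$ as $\sum_{i=\ell+1}^\infty \fp{q_{n_i-1}\alpha}\pmod 1$. Substituting $\fp{q_{n_i-1}\alpha}=1-\delta_i$ from the preamble, the sum collapses modulo $1$ to $-\sum_{i\geq\ell+1}\delta_i$, so $d(x_\ell,x_\infty)=\sum_{i\geq\ell+1}\delta_i\leq\sum_{i\geq\ell+1}1/q_{n_i}$. Assumption \ref{A:rapid up} applied with $k=\ell+1$ makes this tail essentially geometric in $1/q_{n_{\ell+1}}$, so it is bounded by $2/q_{n_{\ell+1}}$. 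Combined with the lower bound $\fp{q_{n_\ell}\alpha}>1/(2q_{n_\ell+1})$, the required inequality reduces to $q_{n_{\ell+1}}>8q_{n_\ell+1}$, which again follows from Assumption \ref{A:rapid up}.

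For part (3), the sum $N=1+\sum_{i=0}^\ell q_{n_i-1}$ is dominated by its last term $q_{n_\ell-1}$. Assumption \ref{A:2} forces $a_{n_\ell}=a_{n_\ell-1}=2$, so by the recursive formula in Theorem \ref{T:CF}(1) we have $q_{n_\ell}=2q_{n_\ell-1}+q_{n_\ell-2}$ and $q_{n_\ell-1}>2q_{n_\ell-2}$, which together pin the ratio $q_{n_\ell-1}/q_{n_\ell}$ into the interval $(2/5,1/2)$. The remaining terms $\sum_{i<\ell}q_{n_i-1}$ are $o(q_{n_\ell})$ by the rapid growth in Assumption \ref{A:rapid up}, so $N$ lies in the asserted range $[\tfrac13 q_{n_\ell},q_{n_\ell})$. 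The main obstacle, if any, is the modular bookkeeping in part (2), since the defining sum for $x_\infty$ diverges in $\bR$ and only makes sense on $S^1$ once the parity of $n_i-1$ has been exploited to rewrite each term as $1-\delta_i$; everything else is a direct calculation with the recursion and the standard bounds.
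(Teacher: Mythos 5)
Your treatments of parts (2) and (3) are essentially the paper's own: the distance in (2) is the tail $\sum_{i>\ell}\delta_i\le\sum_{i>\ell}1/q_{n_i}$ compared against $\fp{q_{n_\ell}\alpha}>1/(2q_{n_\ell+1})$, and (3) comes from $a_{n_\ell}=a_{n_\ell-1}=2$ plus the smallness of the earlier terms. Two small quibbles: the geometric decay of the tail and the bound $\sum_{i<\ell}q_{n_i-1}<q_{n_\ell}-q_{n_\ell-1}$ are cleaner to get from Assumption \ref{A:toavoid0} (as the paper does), which holds for every $\ell$, rather than from the asymptotic Assumption \ref{A:rapid up}; and the index $i=0$ in the statement is a typo for $i=1$, since $n_0$ is never defined.

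The genuine problem is part (1). You read $\fp{q_{n_\ell-1}\alpha}$ literally as the fractional part, hence as $1-\delta_\ell>1/2$, which makes (1) vacuous. But that is not the quantity the lemma is about: in this section the paper is (abusively) writing $\fp{q_{n_i-1}\alpha}$ for the circle distance $\delta_i=p_{n_i-1}-q_{n_i-1}\alpha<1/q_{n_i}$. This reading is forced by the paper's own proof of (2), which uses $\left|\fp{q_{n_k-1}\alpha}\right|\le 1/q_{n_k}$, and by the application in Lemma \ref{L:dis hat}, where $\fp{q_{n_\ell-1}\alpha}-\fp{q_{n_\ell}\alpha}$ serves as the separation constant of an orbit of length $q_{n_\ell}$ (so it must be at most $1/q_{n_\ell}$, not close to $1$), and where $x_\infty$ is placed just \emph{below} $x_\ell$, i.e. $x_\ell-\tfrac12\fp{q_{n_\ell}\alpha}<x_\infty<x_\ell$ --- the opposite side from what your mod-$1$ bookkeeping produces (your distances in (2) agree, but the sign does not). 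Under the intended meaning, (1) has real content: by Theorem \ref{T:R}, $\delta_\ell>\frac{1}{q_{n_\ell-1}+q_{n_\ell}}>\frac{1}{2q_{n_\ell}}$, while $\fp{q_{n_\ell}\alpha}<\frac{1}{q_{n_\ell+1}}$, so the claim reduces to $q_{n_\ell+1}>8q_{n_\ell}$, which is exactly where Assumption \ref{A:growing} (or the explicit choice $a_{n_k+1}=k+8$) enters --- an ingredient your argument never touches. Since your preamble already contains the lower bound for $\delta_\ell$, the repair is one line, but as written your (1) proves a trivialized statement that could not feed the separation argument it exists for.
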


\begin{proof}
The first claim follows from Theorem \ref{T:R}, since 
$$\fp{q_{n_\ell-1}\alpha} > \frac1{2 q_{n_\ell}} > \frac4{q_{n_\ell+1}} > 4\fp{q_{n_\ell}\alpha},$$
where we have used that, given our assumptions, $q_{n_\ell+1}> 8 q_{n_\ell}$.

 The second claim follows from the fact that 
 \begin{eqnarray*}
 d(x_\ell,x_\infty)&=&\sum_{k=\ell+1}^\infty2\left| \fp{q_{n_k-1}\alpha}\right|
 \\&\leq& \sum_{k=\ell+1}^\infty\frac{2}{q_{n_k}}\leq\frac{4}{q_{n_{\ell+1}}}\leq \frac12 \fp{q_{n_\ell}\alpha}
 \end{eqnarray*}
 and the exponential growth of the $q_{j}$ (compare to the start of proof of Theorem \ref{thm:output jon}).
 
 The upper bound in the third claim follows by noting that Assumption \ref{A:toavoid0} gives $N<q_{n_{\ell-1}}+q_{n_{\ell}-1}$. The lower bound is obtained by noting  $N\geq q_{n_\ell-1}$ and $q_{n_\ell}\leq (a_{n_\ell}+1)q_{n_{\ell}-1}$ and then using Assumption \ref{A:2}.
\end{proof}

\bold{Unique ergodicity.} Before proving unique ergodicity we need the following lemma. Recall $J_\ell'=\left[\sum_{k=1}^{\ell-1} 2\fp{q_{n_k} \alpha},\sum_{k=1}^{\ell-1}2\fp{q_{n_k} \alpha}+\fp{q_{n_\ell} \alpha}\right).$

\begin{lem}\label{L:dis hat}
For each $\ell>0$ the intervals
$$R^i(x_\infty+J'_\ell), \quad i=0,\ldots, q_{n_\ell}$$
are disjoint from each other and $[0,\frac 1 2 \fp{q_{n_{\ell}}\alpha})$ and $[1-\fp{q_{n_{\ell}}\alpha}, 1)$.

For large enough $\ell$,
$\cup_{i=0}^{q_{n_\ell-1}} R^i(J_\ell')$ is disjoint from $x_\infty+J_\ell'$. 
\end{lem}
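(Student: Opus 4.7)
The plan is to imitate the proof of Lemma \ref{L:dis}, exploiting that $x_\ell$ lies on the orbit of $0$ under the rotation $R$ and that $x_\infty$ is a controlled perturbation of $x_\ell$. First I would verify that $x_\ell = R^{1-K_\ell}(0)$ where $K_\ell = \sum_{i=1}^\ell q_{n_i-1}$, using that $\fp{q_{n_i-1}\alpha} \equiv q_{n_i-1}\alpha \pmod 1$ to collapse the defining sum modulo $1$ to $(1-K_\ell)\alpha$. Assumption \ref{A:2} (giving $a_{n_\ell} = a_{n_\ell-1} = 2$) together with the rapid growth of the $q_{n_k}$ yields $K_\ell \leq 2 q_{n_\ell-1} \leq q_{n_\ell}$. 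Setting $\delta = x_\infty - x_\ell$, so that $|\delta| < \tfrac{1}{2}\fp{q_{n_\ell}\alpha}$ by Lemma \ref{L:kinfinity}(2), one then obtains
$$R^i(x_\infty + J'_\ell) = R^{i+N_0-K_\ell+1}([0, \fp{q_{n_\ell}\alpha})) + \delta,$$
where $N_0 = 2\sum_{k=1}^{\ell-1} q_{n_k}$.

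For the first claim, I would apply the separation property (Theorem \ref{T:R}(4)) to the orbit of $0$ at the indices $\{-q_{n_\ell}, 0\} \cup \{i+N_0-K_\ell+1 : 0 \leq i \leq q_{n_\ell}\}$. The full range of indices has size at most $N_0 + 2q_{n_\ell} + 1 < q_{n_\ell+1}$ by Assumption \ref{A:toavoid0}, so these orbit points are pairwise at distance at least $\fp{q_{n_\ell}\alpha}$. Pairwise disjointness of the intervals $R^i(x_\infty + J'_\ell)$ follows since translation by $\delta$ preserves distances and each interval has length $\fp{q_{n_\ell}\alpha}$. Disjointness from $[0, \tfrac{1}{2}\fp{q_{n_\ell}\alpha})$ and $[1-\fp{q_{n_\ell}\alpha}, 1)$ then follows by combining the $\fp{q_{n_\ell}\alpha}$-separation of each basepoint from $R^0(0) = 0$ and $R^{-q_{n_\ell}}(0) = 1-\fp{q_{n_\ell}\alpha}$ with the $\tfrac{1}{2}\fp{q_{n_\ell}\alpha}$ buffer afforded by $|\delta|$.

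For the second claim, I would observe that for $i \in [0, q_{n_\ell-1}]$ the basepoint of $R^i(J'_\ell)$ differs from that of $x_\infty + J'_\ell$ by the orbit distance $d(R^{i+K_\ell-1}(0), 0)$ plus the $\delta$-shift. The index range $[K_\ell-1, K_\ell-1+q_{n_\ell-1}]$ lies roughly in $[q_{n_\ell-1}, 2q_{n_\ell-1}]$ and stays away from the best-approximate return time $q_{n_\ell}$, so the closest approach to $0$ is at $i+K_\ell-1 = q_{n_\ell-1}$ with distance $1-\fp{q_{n_\ell-1}\alpha} < 1/q_{n_\ell}$. This dominates $\fp{q_{n_\ell}\alpha} \leq 1/(a_{n_\ell+1}q_{n_\ell})$ by Assumption \ref{A:growing}, and for $\ell$ large $|\delta| = o(1/q_{n_\ell})$ by a sharpening of Lemma \ref{L:kinfinity}(2) using Assumption \ref{A:rapid up}, leaving a gap much larger than $\fp{q_{n_\ell}\alpha}$ that guarantees disjointness.

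The hardest step is the first claim's disjointness from $[0, \tfrac{1}{2}\fp{q_{n_\ell}\alpha})$ precisely at the index $i = K_\ell - 1 - N_0$, where the basepoint is exactly $\delta$ so that the corresponding interval $[\delta, \delta + \fp{q_{n_\ell}\alpha})$ sits immediately to the right of $0$; handling this step requires exploiting that $\delta > 0$ (forced by the alternating property: each $\fp{q_{n_i-1}\alpha} > 1/2$, so $x_\infty - x_\ell$ is the positive sum of the defects $1-\fp{q_{n_i-1}\alpha}$) together with the precise quantitative form of the bound on $|\delta|$ to pin down where the interval lies relative to $[0, \tfrac{1}{2}\fp{q_{n_\ell}\alpha})$.
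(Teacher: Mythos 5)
Your overall strategy is the same as the paper's: write $x_\ell$ as a point on the $R$-orbit of $0$, treat $x_\infty$ as a perturbation of $x_\ell$ by $\delta$, and apply the separation property to an orbit segment of total length at most $N_0 + 2q_{n_\ell} + O(q_{n_\ell}) < q_{n_\ell+1}$. The second-claim argument (closest approach at exponent $q_{n_\ell-1}$ with gap $\approx 1/q_{n_\ell} \gg \fp{q_{n_\ell}\alpha}$) is sound. The problem is with your handling of the first claim: you take $x_\ell = R^{1-K_\ell}(0)$ and $\delta = x_\infty - x_\ell > 0$, and you correctly notice that the index $i = K_\ell - 1 - N_0$ then places a translated interval at $[\delta, \delta + \fp{q_{n_\ell}\alpha})$ -- but you do not actually close this case, and in fact it \emph{cannot} be closed: with $0 < \delta < \tfrac{1}{2}\fp{q_{n_\ell}\alpha}$, the interval $[\delta, \delta + \fp{q_{n_\ell}\alpha})$ manifestly meets $[0, \tfrac{1}{2}\fp{q_{n_\ell}\alpha})$. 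Saying that the ``precise quantitative form of the bound on $|\delta|$'' will ``pin down where the interval lies'' is not an argument; the bound you have goes the wrong way.

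The resolution is that the sign is opposite to what you computed. The paper's proof uses $x_\ell - \tfrac12\fp{q_{n_\ell}\alpha} < x_\infty < x_\ell$, i.e.\ $\delta < 0$, and correspondingly $x_\ell + J_\ell' = R^{N}(J_\ell')$ with $N = 1 + K_\ell$ (not $1 - K_\ell$). The displayed definition $x_\infty = \alpha - \sum \fp{q_{n_i-1}\alpha}$ is a typo -- read literally, with $n_i - 1$ odd so each $\fp{q_{n_i-1}\alpha} \in (\tfrac12, 1)$, the series diverges -- but the rest of the paper (the proof of Lemma \ref{L:kinfinity}(2), the inequality chain in the proof of this lemma) consistently treats $\fp{q_{n_i-1}\alpha}$ here as the return distance $1 - \fp{q_{n_i-1}\alpha}$, giving $x_\infty < x_\ell$ and $x_\ell \equiv (1+K_\ell)\alpha$. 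Under that reading, the exponents $N_0 + K_\ell + 1 + i$ for $0 \le i \le q_{n_\ell}$ are all strictly positive and bounded by $N_0 + K_\ell + 1 + 2q_{n_\ell} < q_{n_\ell+1}$, so separation keeps every translated basepoint at distance $\ge \fp{q_{n_\ell}\alpha}$ from $0$, and the leftward shift by $|\delta| < \tfrac12\fp{q_{n_\ell}\alpha}$ keeps the intervals clear of $[0, \tfrac12\fp{q_{n_\ell}\alpha})$ and $[1 - \fp{q_{n_\ell}\alpha}, 1)$. The problematic index you flagged simply does not occur. When a literal reading of a definition makes a stated lemma false (as your reading does here), the right move is to revisit the definition rather than to hope for a quantitative miracle.
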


\begin{proof}
Compare to the proof of Lemma \ref{L:dis}. We will  use  Lemma \ref{L:kinfinity} several times. 

Set  $N=1+\sum_{i=1}^{\ell}q_{n_i-1}$ and $N_0=2\sum_{k=1}^{\ell-1} q_{n_k}$.
 Note that the intervals 
$$R^i(x_\ell+J'_\ell), \quad i=0,\ldots, q_{n_\ell}$$ 
and $[0,\fp{q_{n_{\ell}}\alpha})$ and $[1-\fp{q_{n_{\ell}}\alpha}, 1)$ are contained in an orbit of length $N_0+N+2q_{n_\ell}$ of an interval of size $\fp{q_{n_\ell}\alpha}$.

By Assumption $\ref{A:toavoid0}$, $N_0<q_{n_\ell}$, so $N_0+N+2q_{n_\ell}<4q_{n_\ell}$. By Assumption \ref{A:growing}, we have $4q_{n_\ell}<a_{n_\ell+1} q_{n_\ell} < q_{n_\ell+1}$, so in particular we conclude that $N_0+N+2q_{n_\ell}< q_{n_\ell+1}$.

Hence, by the separation property, the above intervals  are disjoint. Since $x_\ell -\frac 1 2 \fp{q_{n_\ell}\alpha}<x_\infty<x_\ell$, the intervals
 $R^i(x_\infty+J'_\ell)$ are disjoint from each other and $[0,\frac 1 2 \fp{q_{n_{\ell}}\alpha})$ and $[1-\fp{q_{n_{\ell}}\alpha}, 1)$. 


We now prove the final claim. Note $x_\ell+J'_\ell=R^N(J'_\ell)$. 
  Because the intervals in $\{R^i(J'_\ell)\}_{i=0}^{q_{n_\ell}-1}$ are $\fp{q_{n_\ell-1}\alpha}-\fp{q_{n_\ell}\alpha}$ separated,
 $R^N(J'_\ell)$ is the unique element of this orbit segment within $\fp{q_{n_\ell-1}\alpha}-\fp{q_{n_\ell}\alpha}$ of $x_\ell+J'_\ell$. Since 
 $d(x_\infty,x_\ell)<\fp{q_{n_\ell-1}\alpha}-\fp{q_{n_\ell}\alpha}$, $N$ is the unique $i \in \{0,...,q_{n_\ell}-1\}$ such
 that $R^i(J'_\ell) \cap (x_\infty +J'_\ell) \neq \emptyset.$ 
 Since $N>q_{n_\ell-1}$ the final claim follows. 
\end{proof}

\begin{thm}\label{T:hatUE}
$\hat{T}$ is uniquely ergodic. 
\end{thm}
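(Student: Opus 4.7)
The plan is to mimic the proof of Proposition~\ref{prop:suff holds}, now working with the four $\hat T_k$-invariant sets obtained by pairwise intersections of $\hat U_k^1, \hat V_k^1, \hat U_k^2, \hat V_k^2$. The involutions $\iota_1(x,i,j)=(x,i+1,j)$ and $\iota_2(x,i,j)=(x,i,j+1)$ commute with $\hat T$ and cyclically permute these four components; a direct generalization of Lemma~\ref{lem:erg decomp} to the group $\langle\iota_1,\iota_2\rangle\cong\bZ_2\times\bZ_2$ shows that any ergodic $\hat T$-invariant probability measure is absolutely continuous with respect to $\lambda$ and that there are at most four ergodic measures.

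Next, I would define two families of ``switching sets''
$$
\hat S_k^{(1)}=\bigcup_{i=0}^{q_{n_k-1}-1}R^i(J'_k)\times\bZ_2\times\bZ_2,\qquad
\hat S_k^{(2)}=\bigcup_{i=0}^{q_{n_k-1}-1}R^i(x_\infty+J'_k)\times\bZ_2\times\bZ_2.
$$
By the structure of the non-minimal approximates, the inductive step from $\hat T_{k-1}$ to $\hat T_k$ performs $\iota_1$-skewings over $J_k$ and $\iota_2$-skewings over $x_\infty+J_k$ independently; hence a point of $\hat S_k^{(1)}$ lies in exactly one of $\hat U_k^1\cap\hat V_{k-1}^1$ or $\hat V_k^1\cap\hat U_{k-1}^1$, and by the second half of Lemma~\ref{L:dis hat} this visit does \emph{not} change its second-axis status ($\hat U_k^2$ vs.\ $\hat V_k^2$); symmetric statements hold for $\hat S_k^{(2)}$. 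The quasi-independence estimate of Lemma~\ref{lem:hits right} adapts verbatim to give
$$
\lambda(\hat S_k^{(a)}\cap\hat S_L^{(b)})\geq C\,\lambda(\hat S_k^{(a)})\lambda(\hat S_L^{(b)})
$$
for any $a,b\in\{1,2\}$ and $L\neq k$, since the Denjoy--Koksma step is unaffected by the translate $x_\infty$. Combined with $\sum_k\lambda(\hat S_k^{(a)})=\infty$ (using Assumption~\ref{A:infsum} exactly as in Section~\ref{sec:ue}), the Borel--Cantelli-type claim in the proof of Proposition~\ref{prop:suff holds} yields that almost every $x\in S^1$ lies in infinitely many $\hat S_k^{(1)}$ and simultaneously in infinitely many $\hat S_k^{(2)}$.

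To conclude, let $\hat\nu$ be an ergodic measure. Using the equidistribution lemma preceding Proposition~\ref{prop:ue suff} (applied to each of the four minimal components of $\hat T_k$, each a union of $o(q_{n_{k+1}})$ intervals by the analog of Lemma~\ref{L:Uo}), any $\hat T$-invariant set of full $\hat\nu$-measure agrees up to $\epsilon$-error with a union of these four components for all large $k$. A $\hat\nu$-generic point in $\hat S_k^{(1)}$ for infinitely many $k$ then produces, via the Denjoy--Koksma contradiction at the end of Proposition~\ref{prop:suff holds}, the equality $\hat\nu(\hat U_k^1\cap B)\approx\hat\nu(\hat V_k^1\cap B)$ for every $\hat T_k$-invariant set $B$ saturated along the second axis; the symmetric statement for $\hat S_k^{(2)}$ forces $\hat\nu(E_k)\approx 1/4$ for each of the four components $E_k$, and hence $\hat\nu=\lambda$.

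The hardest step will be this last one: the decorrelation between the two $\bZ_2$ fibers. Its key ingredient is that the switching times for $\hat U_k^1\leftrightarrow\hat V_k^1$ and for $\hat U_k^2\leftrightarrow\hat V_k^2$ are disjoint by Lemma~\ref{L:dis hat}, so visiting $\hat S_k^{(1)}$ changes only the second coordinate and not the third. Once this decoupling is verified, the Denjoy--Koksma contradiction from the proof of Proposition~\ref{prop:suff holds} can be run in parallel on each axis to rule out every non-Lebesgue ergodic measure.
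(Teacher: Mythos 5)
Your approach diverges from the paper's at the very first step, and the divergence is consequential. The paper begins by observing that $\hat{T}$ is a $\bZ_2$-extension of $T$ via the projection $(x,i,j)\mapsto(x,i)$, and since $T$ has already been shown to be uniquely ergodic (Proposition~\ref{prop:suff holds}), any $\hat{T}$-invariant ergodic measure projects to Lebesgue on $S^1\times\bZ_2$; hence there are \emph{at most two} ergodic measures for $\hat T$, not four. Moreover these two must be exchanged by each of $\iota_1$ and $\iota_2$ separately, hence invariant under $\iota_1\iota_2$, which forces the candidate ergodic measures to be the weak-* limits of Lebesgue on $E_\ell=(U_\ell^1\cap V_\ell^2)\cup(V_\ell^1\cap U_\ell^2)$ and $F_\ell=(U_\ell^1\cap U_\ell^2)\cup(V_\ell^1\cap V_\ell^2)$, each a union of two of the four minimal components of $\hat T_\ell$. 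After this reduction, a \emph{single} family of switching sets, namely $\cup_{i=0}^{q_{n_\ell-2}-1} R^i(J_\ell')\times\bZ_2\times\bZ_2$, suffices: passing through $J'_\ell$ flips the $i$-coordinate and, by the second half of Lemma~\ref{L:dis hat}, does not flip $j$, so the orbit moves from $E_{\ell-1}$-territory to $F_\ell$-territory or vice versa, and the density-point/Borel--Cantelli contradiction from the end of Proposition~\ref{prop:ue suff} applies verbatim.

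By contrast, you set up the problem with four candidate ergodic measures (one per minimal component) and introduce two families $\hat S_k^{(1)}, \hat S_k^{(2)}$, then try to prove that the two $\bZ_2$ fibers decorrelate. You yourself flag this ``decorrelation'' as the hardest step, and the argument you sketch for it---deriving $\hat\nu(\hat U_k^1\cap B)\approx\hat\nu(\hat V_k^1\cap B)$ for $B$ saturated along the second axis, and then leveraging symmetry to get $\hat\nu(E_k)\approx 1/4$---is genuinely incomplete. The Denjoy--Koksma argument at the end of Proposition~\ref{prop:ue suff} produces a contradiction to Birkhoff for a single density point, not an approximate equality between measures of invariant subsets, and the passage from ``each fiber is individually symmetric'' to ``the four components all get measure $1/4$'' is exactly the independence statement that needs proof, not an immediate consequence. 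This is a real gap, not a notational one, and it is precisely the difficulty that the paper's reduction to $E_\ell$ and $F_\ell$ sidesteps. You should use the unique ergodicity of $T$ as a black box: it collapses the four-component problem to a two-component problem and the remainder of the argument then matches Section~\ref{sec:ue} essentially verbatim. (A smaller point: the paper uses $q_{n_\ell-2}$ in place of your $q_{n_k-1}$ in the definition of the switching set, which is needed together with Assumption~\ref{A:2} to guarantee the orbit segment stays inside $E_\ell\cup F_{\ell-1}$ or $F_\ell\cup E_{\ell-1}$.)
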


\begin{proof}
This is similar to the proof that $T$ is uniquely ergodic. We will outline the additional considerations that are required. 

Since $T$ is uniquely ergodic, $\hat{T}$ has at most two ergodic components, each of which project to Lebesgue under the projection $(x,i,j)\mapsto (x,i)$ to the first two coordinates, and also under the projection $(x,i,j)\mapsto (x,j)$ to the first and third coordinates. If there are two ergodic measures, they must be exchanged under the involutions $(x,i,j)\mapsto (x,i+1, j)$ and $(x,i,j)\mapsto(x,i,j+1)$, and hence invariant under their product $(x,i+1, j+1)$. 
In fact one of the two measures must be the weak-* limit of Lebesgue on $E_\ell=(U_\ell^1\cap V_\ell^2)\cup (V_\ell^1\cap U_\ell^2)$, and the other must be the limit of $F_\ell=(U_\ell^1\cap U_\ell^2)\cup (V_\ell^1\cap V_\ell^2)$. 

The proof now follows as for $T$, with $S_\ell$ replaced by 
$$\cup_{i=0}^{q_{n_\ell-2}-1} R^i(J_\ell')\times \bZ_2\times \bZ_2.$$
Note that, as in the beginning of Section \ref{sec:ue}, we have $\frac 1 3 q_{n_{\ell}-1}\leq q_{n_\ell-2}\leq \frac12 q_{n_\ell-1}.$ 
Thus by the previous lemma, this set consists of points $p$ such that the orbit of $p$ of length $q_{n_\ell-2}$ is entirely in 
$E_\ell\cup F_{\ell-1}$ or entirely in $F_\ell\cup E_{\ell-1}$. 
So we may repeat the last half of the proof of Proposition \ref{prop:ue suff}.
\end{proof}

\bold{Birkhoff sums.} 
We now outline the relevant changes to the proof of Theorem \ref{thm:output jon}.

To set up context, the appropriate version of Lemma \ref{L:dis} is Lemma \ref{L:dis hat}. With this, Theorem \ref{T:skew} can be modified to
\begin{thm}\label{T:skew hat}
For each integer $\ell\geq1$,  the following properties hold. 
\begin{enumerate}
\item $U_{\ell}^2$ contains
  $$[0, \frac 1 2 \fp{q_{n_{\ell}}\alpha})\times \mathbb{Z}_2 \times \{0\}\quad\text{and}\quad[1-\fp{q_{n_{\ell}}\alpha},1)\times \mathbb{Z}_2 \times \{1\}.$$
\item 
$$\pi_{S_1}(U_\ell^2\setminus U_{\ell-1}^2)=\cup_{i=0}^{q_{n_\ell-1}} R^i(J_\ell'+x_\infty).$$
\end{enumerate}
\end{thm}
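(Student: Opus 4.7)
The plan is to mimic the proof of Theorem \ref{T:skew}, replacing Lemma \ref{L:dis} with Lemma \ref{L:dis hat} throughout. The organizing principle is that conjugating $T_\ell$ by the first-coordinate translation $(x,j)\mapsto(x+x_\infty,j)$ identifies the skewing over $J$ with the skewing over $J+x_\infty$. Consequently the literal translate $U_\ell + x_\infty$ is the natural invariant set for the shifted skewing, and the definition of $U_\ell^2$ in terms of $U_\ell + x_\infty$ makes it straightforward to transfer the structural results of Theorem \ref{T:skew}.

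For claim (2), I would note that by the definition of $U_\ell^2$,
$$\pi_{S^1}(U_\ell^2 \setminus U_{\ell-1}^2) = \pi_{S^1}\bigl((U_\ell\setminus U_{\ell-1}) + x_\infty\bigr) = \pi_{S^1}(U_\ell\setminus U_{\ell-1}) + x_\infty.$$
Applying Theorem \ref{T:skew}(2) and using that $R$ commutes with translation, this equals $\cup_{i=0}^{q_{n_\ell-1}} R^i(J_\ell') + x_\infty = \cup_{i=0}^{q_{n_\ell-1}} R^i(J_\ell' + x_\infty)$, as required.

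For claim (1), I would argue by induction on $\ell$. For the base case $\ell=1$, I would write $U_1^2$ explicitly from the definitions and verify the two containments by invoking Lemma \ref{L:dis hat}, which asserts disjointness of the orbit $\{R^i(J_1'+x_\infty)\}_{i=0}^{q_{n_1}}$ from $[0,\tfrac{1}{2}\fp{q_{n_1}\alpha})$ and $[1-\fp{q_{n_1}\alpha},1)$. For the inductive step, the transition from $U_\ell^2$ to $U_{\ell+1}^2$ modifies only the orbit of $J_{\ell+1}'+x_\infty$ in the third-coordinate value, and Lemma \ref{L:dis hat} at level $\ell+1$ guarantees this orbit avoids $[0,\tfrac{1}{2}\fp{q_{n_{\ell+1}}\alpha})$ and $[1-\fp{q_{n_{\ell+1}}\alpha},1)$, so the containments at the specified third-coordinate values propagate unchanged.

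The main obstacle I anticipate is the asymmetric appearance of the factor $\tfrac{1}{2}$ in the first interval, absent in the corresponding statement of Theorem \ref{T:skew}(1). This factor originates in Lemma \ref{L:dis hat} and ultimately traces back to the bound $d(x_\ell,x_\infty)<\tfrac{1}{2}\fp{q_{n_\ell}\alpha}$ of Lemma \ref{L:kinfinity}(2), which controls the discrepancy between the finite-stage shift $x_\ell$ implicit in the iterated construction and its limit $x_\infty$. Tracking this discrepancy carefully through the inductive step is the only place where the proof genuinely differs from that of Theorem \ref{T:skew}.
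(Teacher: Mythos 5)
Your derivation of claim (2) is fine and is essentially the paper's one-line argument: the defining condition for $U_\ell^2$ ignores the second coordinate, translation by $x_\infty$ commutes with $R$, and Theorem \ref{T:skew}(2) transports directly.

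The gap is in claim (1), already at the base case, and it comes from conflating the two different mechanisms behind the two containments in Theorem \ref{T:skew}(1). There, $[1-\fp{q_{n_1}\alpha},1)\times\{1\}\subset U_1$ is a \emph{disjointness} statement (the added/removed orbit misses that interval, so it keeps its initial label from $U_0=S^1\times\{1\}$), but $[0,\fp{q_{n_1}\alpha})\times\{0\}\subset U_1$ is a \emph{covering} statement: it holds only because the seed $J_1'$ is literally $[0,\fp{q_{n_1}\alpha})$, i.e.\ the $i=0$ interval of the orbit that gets adjoined in fiber $\{0\}$. After translating, that covering mechanism at $0$ is gone: with $U_k^2=\{(x,i,j):(x,j)\in U_k+x_\infty\}$, the fiber-$\{0\}$ slice of $U_1^2$ projects exactly onto $\bigcup_{i=0}^{q_{n_1}-1}R^i(J_1'+x_\infty)$, and Lemma \ref{L:dis hat} says precisely that this orbit is \emph{disjoint} from $[0,\tfrac12\fp{q_{n_1}\alpha})$. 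So the disjointness you invoke cannot yield the first containment; if you actually carry out your own step ``write $U_1^2$ explicitly,'' you find that $[0,\tfrac12\fp{q_{n_1}\alpha})\times\bZ_2\times\{0\}$ is disjoint from $U_1^2$, while both $[0,\tfrac12\fp{q_{n_1}\alpha})\times\bZ_2\times\{1\}$ and $[1-\fp{q_{n_1}\alpha},1)\times\bZ_2\times\{1\}$ lie in $U_1^2$ — i.e.\ near $0$ the two windows sit in the \emph{same} third-coordinate fiber. Your inductive step only propagates whatever the base case gives, so it propagates this, not the asserted asymmetric pairing of the two windows with fibers $\{0\}$ and $\{1\}$ inside $U_\ell^2$.

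Note also that this is not repaired by the $\tfrac12$ bookkeeping you flag as the ``main obstacle'': the discrepancy $d(x_\ell,x_\infty)<\tfrac12\fp{q_{n_\ell}\alpha}$ of Lemma \ref{L:kinfinity}(2) only perturbs where the translated orbit sits, it does not create an interval of the orbit covering a right neighborhood of $0$. What your translation framework does give painlessly is the analogue of Theorem \ref{T:skew}(1) with windows based at $x_\infty$, namely $U_\ell^2\supset[x_\infty,x_\infty+\fp{q_{n_\ell}\alpha})\times\bZ_2\times\{0\}$ and $[x_\infty-\fp{q_{n_\ell}\alpha},x_\infty)\times\bZ_2\times\{1\}$, with no factor $\tfrac12$ and no appeal to Lemma \ref{L:dis hat}. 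Producing the stated containments with windows at $0$, and with the two windows in \emph{different} third-coordinate fibers of the same invariant set, is exactly the one-sidedness that the later Birkhoff-sum argument needs, and it does not follow from quoting the disjointness lemma; as written, your proof of claim (1) fails at this point, and an honest execution of your base-case computation would in fact contradict the first containment rather than establish it.
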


We next observe that Lemma \ref{L:insignificant1} can be modified to be
\begin{lem}\label{L:insignificant1 hat}
Set $g(x)=1/x$. 
For all $N\geq q_{n_k}$,
$$ \sum_{i=0}^{N-1} g(R^ix)\chi_{[\frac 1 2 \fp{q_{n_{k-1}}\alpha},1)}(R^ix) = o(N\log(N)) $$
and 
$$ \sum_{i=0}^{N-1} g'(R^ix)\chi_{[\frac 1 2 \fp{q_{n_{k-1}}\alpha},1)}(R^ix) = O(N^2).$$
\end{lem}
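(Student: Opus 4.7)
The plan is to observe that the proof of Lemma \ref{L:insignificant1} goes through essentially verbatim, since replacing $\fp{q_{n_{k-1}}\alpha}$ by $\tfrac12 \fp{q_{n_{k-1}}\alpha}$ changes the integrals and total variations involved by only a bounded multiplicative factor, which does not affect the asymptotic estimates.

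More concretely, I would proceed as follows. Take the Ostrowski expansion $N = \sum_{n=1}^{k} b_n q_n$, so that an orbit segment of length $N$ breaks into $\sum b_n = O(\log N)$ Denjoy-Koksma blocks (using Assumption \ref{A:a_nsmallsum}, exactly as in the original proof). Applying Denjoy-Koksma to each block for the bounded-variation function $g(x)\chi_{[\tfrac12 \fp{q_{n_{k-1}}\alpha},1)}(x)$ gives
\begin{eqnarray*}
\sum_{i=0}^{N-1} g(R^ix)\chi_{[\tfrac12\fp{q_{n_{k-1}}\alpha},1)}(R^ix)
&=& O\!\left( -N\log\!\bigl(\tfrac12\fp{q_{n_{k-1}}\alpha}\bigr) + \tfrac{\log N}{\tfrac12\fp{q_{n_{k-1}}\alpha}} \right) \\
&=& O\!\left( N\log q_{n_{k-1}+1} + \log(N)\,q_{n_{k-1}+1} \right),
\end{eqnarray*}
using Theorem \ref{T:R} to convert $\fp{q_{n_{k-1}}\alpha}^{-1}$ into a constant times $q_{n_{k-1}+1}$. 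Assumption \ref{A:rapid up}, which says $\log q_{n_{k-1}+1} = o(\log q_{n_k}) = o(\log N)$ (since $N \geq q_{n_k}$), then yields the desired $o(N\log N)$ bound.

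For the derivative estimate, I would apply the same Denjoy-Koksma argument to $g'(x)\chi_{[\tfrac12\fp{q_{n_{k-1}}\alpha},1)}(x)$. Here the integral contributes $O(N\,\fp{q_{n_{k-1}}\alpha}^{-1}) = O(N q_{n_{k-1}+1})$ and the total variation contributes $O(\log(N)\,\fp{q_{n_{k-1}}\alpha}^{-2}) = O(\log(N)\,q_{n_{k-1}+1}^2)$. Both are $O(N^2)$ by Assumption \ref{A:rapid up} (which in particular forces $q_{n_{k-1}+1}$ to be much smaller than $N$).

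I do not expect a genuine obstacle here: the factor $\tfrac12$ is harmless because every estimate in the original proof was asymptotic and ignored bounded multiplicative constants. The only thing to double-check is that the cutoff $\tfrac12\fp{q_{n_{k-1}}\alpha}$ still satisfies $\tfrac12\fp{q_{n_{k-1}}\alpha} \geq \tfrac{1}{2(q_{n_{k-1}}+q_{n_{k-1}+1})}$, so that $g$ remains bounded by a constant multiple of $q_{n_{k-1}+1}$ on the support of the characteristic function, and this is immediate from Theorem \ref{T:R}(2).
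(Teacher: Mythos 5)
Your proposal is correct and matches the paper's treatment: the paper simply remarks that Lemma \ref{L:insignificant1} "can be modified to" give Lemma \ref{L:insignificant1 hat}, offering no separate argument because—exactly as you observe—the factor of $\tfrac12$ only changes the Denjoy-Koksma integral and variation bounds by a bounded multiplicative constant, which is absorbed into the $O$ and $o$. Your explicit walk-through of the Ostrowski/Denjoy-Koksma steps and the use of Assumptions \ref{A:a_nsmallsum} and \ref{A:rapid up} is precisely the proof of Lemma \ref{L:insignificant1} carried over verbatim.
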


With Lemma \ref{L:dis hat} in mind, we can modify Lemma \ref{L:insignificant2} to be

\begin{lem}\label{L:insignificant2 hat}
Set $g(x)=1/x$. Let $S$ be an orbit of length $q_{n_\ell}$ of an interval of length $\fp{q_{n_{\ell}}\alpha}$. Assume $S$ is disjoint from $[0, \frac 1 2 \fp{q_{n_{\ell}}\alpha})$. Let $x$ be any point disjoint from $\cup_{i=0}^{\sqrt{a_{n_{\ell}+1}} q_{n_{\ell}}}R^{-i}(S)$. Then for all $N>q_{n_\ell}$
$$ \sum_{i=0}^{N-1} g(R^ix)\chi_{S}(R^ix) =
o(N\log(N))$$
and 
$$ \sum_{i=0}^{N-1} g'(R^ix)\chi_{S}(R^ix) =
o(N^2\log(N)^\frac13).$$
The implied constant does not depend on $\ell$. 
\end{lem}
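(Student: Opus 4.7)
The plan is to follow the proof of Lemma \ref{L:insignificant2} essentially verbatim, observing that the only change in the hypothesis is that $S$ is now only assumed disjoint from the smaller interval $[0, \tfrac12 \fp{q_{n_\ell}\alpha})$ rather than from $[0, \fp{q_{n_\ell}\alpha})$, and this weaker assumption loses only a factor of $2$ in every intermediate estimate and so does not affect any of the asymptotic bounds.

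First, I would redo the Denjoy--Koksma step with the new cutoff. The function $g(x)\chi_{[\tfrac12\fp{q_{n_\ell}\alpha},1)}(x)$ has total variation $O(1/\fp{q_{n_\ell}\alpha}) = O(q_{n_\ell+1})$ (only the endpoint value at $\tfrac12\fp{q_{n_\ell}\alpha}$ changes, by a factor of at most $2$) and its integral is $O(\log q_{n_\ell+1})$. Therefore applying Denjoy--Koksma as in the original proof gives that the sum of $g(x)\chi_{[\tfrac12\fp{q_{n_\ell}\alpha},1)}(x)$ over any orbit segment of length $q_{n_\ell}$ is at most $O(q_{n_\ell}\log q_{n_\ell+1} + q_{n_\ell+1}) = O(q_{n_\ell}\log q_{n_\ell})$. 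Similarly the sum of $g'(x)\chi_{[\tfrac12\fp{q_{n_\ell}\alpha},1)}(x)$ over such an orbit segment is at most $O(a_{n_\ell+1}^2 q_{n_\ell}^2)$.

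Next, since the disjointness assumption on $S$ now says precisely that $S \subset [\tfrac12\fp{q_{n_\ell}\alpha}, 1)$, the sums of $g$ and $g'$ restricted to $S$ over any orbit segment of length $q_{n_\ell}$ are bounded by the corresponding sums from the previous paragraph. The separation argument then goes through unchanged: by Theorem \ref{T:R} part (4), any orbit segment of length $q_{n_\ell+1} > a_{n_\ell+1} q_{n_\ell}$ makes at most one pass through $S$, and the hypothesis that $x$ avoids $\bigcup_{i=0}^{\sqrt{a_{n_\ell+1}}q_{n_\ell}} R^{-i}(S)$ forces consecutive passes through $S$ to be separated by at least $\sqrt{a_{n_\ell+1}} q_{n_\ell}$ iterates. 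Hence if the orbit of length $N$ makes $m$ passes through $S$, then $N \geq (m-1)\sqrt{a_{n_\ell+1}} q_{n_\ell}$, and the two sums in the statement are bounded by $m \cdot O(q_{n_\ell}\log q_{n_\ell})$ and $m \cdot O(a_{n_\ell+1}^2 q_{n_\ell}^2)$ respectively.

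Finally, combining these counts exactly as in the end of the proof of Lemma \ref{L:insignificant2}, the first bound follows from Assumption \ref{A:growing} ($a_{n_\ell+1}\to\infty$) and the second from Assumption \ref{A:a_nsmall} ($a_{n_\ell+1} = o(\log\log q_{n_\ell})$). There is no genuine obstacle; the only thing to check is that the factor-of-$2$ loss coming from the weaker disjointness hypothesis is absorbed in the $O$-notation, which it clearly is in each step.
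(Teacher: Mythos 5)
Your proposal is correct and is precisely what the paper intends: the paper gives no separate proof of Lemma \ref{L:insignificant2 hat}, merely noting that it is the same modification of Lemma \ref{L:insignificant2} whose proof you reproduce, with the cutoff moved from $\fp{q_{n_\ell}\alpha}$ to $\tfrac12\fp{q_{n_\ell}\alpha}$. You correctly observe that this changes the Denjoy--Koksma variation and integral bounds only by bounded constants, so the counts $O(q_{n_\ell}\log q_{n_\ell})$ and $O(a_{n_\ell+1}^2 q_{n_\ell}^2)$ per pass survive, and the separation argument and the appeal to Assumptions \ref{A:growing} and \ref{A:a_nsmall} are unchanged.
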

We now outline the straightforward modifications to Theorem \ref{thm:output jon}. Let $\hat{Q}_M$ be the points within distance $\frac 1 {M(\log(M))^{\frac 1 {12}}}$ of the singularities of $\hat{f}$. 
Let 
\begin{eqnarray*}
\hat{B}_{S^1}(M)&=&\bigcup_{i=0}^{2M+5\sqrt{\log(M)}} R^{-i}(\hat{Q}_M)
\\&\cup& \bigcup_{i=0}^{2M+5\sqrt{\log(M)}} R^{-i}(x_{\infty}+ \cup_{i=\ell+1}^\infty J_\ell)
\\&\cup& \bigcup_{i=-q_{n_\ell}}^{\sqrt{a_{n_\ell+1}}q_{n_\ell}}R^{-i} (x_\infty+J_\ell').
\end{eqnarray*} 

The changes to the proof of Claims 1 and 2 are obvious changes to the measure estimates.

Claim 3 requires the most substantive changes. We assume $p \in V_\ell^2$. 
 As before, because of cancellations and our choice of $\hat{B}_{S^1}$ we may restrict our attention to  $\chi_{\{1\}}(j)(g(-x)-g(x))$. Recall $g(x)=\frac 1 {\fp{x}}$. 
By Lemma \ref{L:insignificant1 hat} we may restrict to $[0,\frac 1 2 \fp{q_{n_{\ell-1}}\alpha})\times \bZ_2\times \{1\}$ which by Theorem \ref{T:skew hat} is entirely contained in $V_{\ell-1}^{ 2}$. Then in place of Lemma \ref{L:insignificant2} we invoke Lemma \ref{L:insignificant2 hat}.

 Claim 4 is analogous to Claim 3.

The change to Claim 5 is straightforward. 

Claim 6 is straightforward.

This proves the analogue of Theorem \ref{thm:output jon}. Since we have already proven unique ergodicity of $\hat{T}$ (Theorem \ref{T:hatUE}), mixing for the suspension flow now follows as in the previous section for $T$ and $f$.

\bold{A flow on a surface.} Note that the suspension flow over $\hat{T}$ with roof function the constant function 1 is the vertical flow on the surface in Figure \ref{F:FourTori}. The first return map to the union of the four intervals at the bottom of the four parallelograms is $\hat{T}$. 

\begin{figure}[h!]
\includegraphics[width=\linewidth]{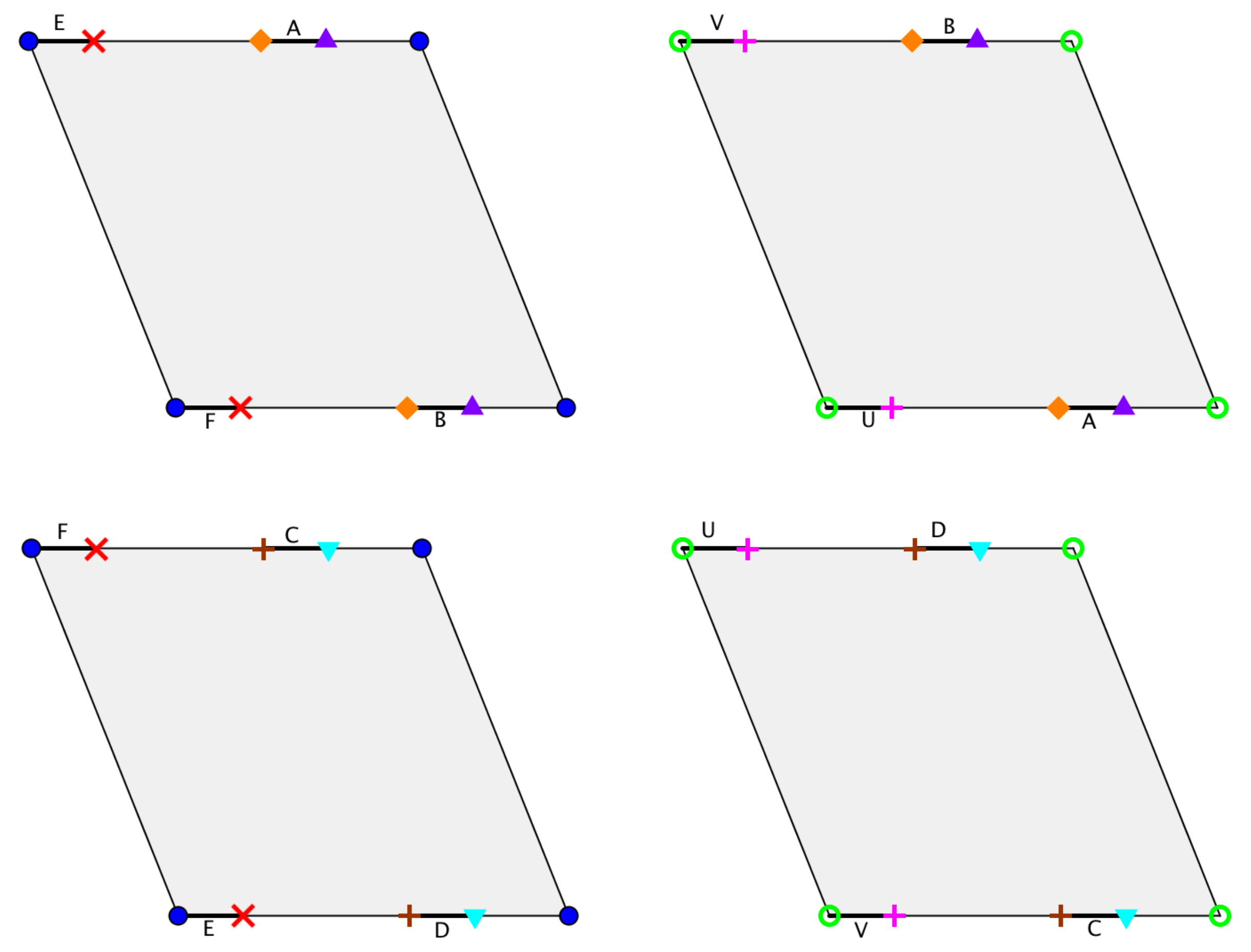}
\caption{Unmarked opposite sides are identified. The height and base of each parallelogram is one, and the shear is by $\alpha$. All of the labelled intervals have length $|J|$, and in each parallelogram the second interval is equal to the first translated by $x_\infty$.}
\label{F:FourTori}
\end{figure}


This flat surface has 8 cone points, each with angle $4\pi$. This flow is $C^{\infty}$ away from the cone points. By appropriately slowing down the flow near these fixed points--a standard procedure explained in detail in \cite[Section 7]{con_fra}--one can obtain a $C^{\infty}$ flow on this surface that has non-degenerated fixed points at the 8 distinguished points points, and such that the first return time function $h$ satisfies that $h-\hat{f}$, $h'-\hat{f}'$, $h''-\hat{f}''$ are bounded.

Because Birkhoff sums of a bounded function over orbit segments of length $N$ are $O(N)$, all estimates in this paper hold with $\hat{f}$ replaced with $h$. Hence the above arguments show that the $C^{\infty}$ flow we have produced is mixing. 

A saddle connection is a trajectory of the flow that connects singularities of the flat surface.  By the definition of the skew product,  points in $S^1\times \bZ_2\times \bZ_2$ with the same $S^1$ coordinate cannot have that their forward orbits intersect. So to show that there does not exist a saddle connection, it suffices to show that the forward $\hat{T}$ orbits each element of $\{x_\infty-\alpha, x_\infty+|J|-\alpha,-\alpha,1-\alpha+|J|\}\times \mathbb{Z}_2\times \mathbb{Z}_2$ are infinite and distinct. 
This is straightforward to check that from the construction.
 This verifies that the flow does not have saddle connections and completes the proof of Theorem \ref{T:main}.


\bibliography{mybib}{}
\bibliographystyle{amsalpha}
\end{document}